\theoremstyle{plain}
\newtheorem*{theorem*}{Theorem}
\renewcommand{\epsilon}{\varepsilon}
\newcommand{\N}{{\mathbb N}}
\newcommand{\R}{{\mathbb R}}
\newcommand{\Op}{{\rm{Op}}}
\newcommand{\Oph}{{\rm{Op}_{\h}}}
\newcommand{\h}{\hbar}
\newcommand{\supp}{{\operatorname{Supp\,}}}
\renewcommand{\phi}{\varphi}
\newcommand{\onehalf}{\frac{1}{2}}
\numberwithin{equation}{subsection}
\newtheorem{theo}[equation]{{\sc Theorem}}
\newtheorem{cor}[equation]{{\sc Corollary}}
\newtheorem{lem}[equation]{{\sc Lemma}}
\newtheorem{prop}[equation]{{\sc Proposition}}
\theoremstyle{definition}
\newtheorem{defn}[equation]{{\sc Definition}}
\theoremstyle{remark}
\newtheorem{rem}[equation]{Remark}
\theoremstyle{assumption}
\theoremstyle{question}
\date{August 2, 2024}
\title[$\epsilon$-logarithmic modes]{Entropy of logarithmic modes}
\author{Suresh Eswarathasan}
\address{Dalhousie University \\
Department of Mathematics $\&$ Statistics, Chase Building Rm 316 \\
Coburg Road \\
Halifax, Nova Scotia \\
Canada}
\email{sr766936@dal.ca}
\begin{document}

\begin{abstract}
Let $(M,g)$ be a compact, boundaryless, Riemannian manifold whose geodesic flow on its unit sphere bundle is Anosov.  Consider the (semiclassical) Laplace-Beltrami operator on $M$.  Let $\epsilon >0$.  

We study the semiclassical measures $\mu_{sc}$ of quasimodes spectrally supported in intervals of width $\epsilon \frac{\h}{|\log \h|}$, a critical-type regime when considering ``delocalization".  We derive a lower bound for the Kolmogorov-Sinai entropy of $\mu_{sc}$ that depends explicitly on $\epsilon$, in the spirit of that given by Ananthamaran-Koch-Nonnenmacher.
\end{abstract}

\maketitle

\tableofcontents

\section{Introduction} 

\subsection{Motivations}

The article contributes to the theory of semiclassical measures, important objects in quantum chaos, with an emphasis on entropy.  We give some review of these notions and pertinent results before stating our main theorem in Section \ref{sect:main_results}.

\subsubsection{Eigenfunctions and QUE}

An important problem in the field of quantum chaos is to understand the relationship between a dynamical system and its high-energy quantum mechanical models.  The Bohr correspondence principle says that the quantum mechanics replicate the classical mechanics in the semiclassical limit $\h \rightarrow 0.$  This principle can be observed in classical systems that exhibit complete integrability.  When the dynamics become more complicated, the usual techniques on the quantum side breakdown beyond ``short" times, that is, those techniques are valid up to the Ehrenfest time $C |\log \h|$ where $C>0$ is a constant determined by the classical system.  As a result of this break down and the necessity to understand the phenomenon of quantum evolution up to much longer times, a precise description of the stationary states (eigenfunctions) of the quantum system is difficult to obtain.

In this article, we will focus on the quantum system generated by the Laplace-Beltrami operator $\Delta_g$ on a compact Riemannian manifold $(M,g)$ without boundary.  To describe the high-frequency quantum dynamics, we will use a semiclassical formalism, that is we use the semiclassically rescaled Laplacian
\begin{equation*}
    P(\h) = -  \frac{\h^2 \Delta_g}{2}
\end{equation*}
and consider its spectrum in the vicinity of a fixed value $E = \frac{1}{2}$ in the the regime of $\h$ near 0.  The dynamics of a free quantum particle is captured by the Schr\"odinger equation
\begin{equation*}
    i \h \partial_t \phi(t) = P(\h) \phi(t), \, \, \, \phi(t) \in L^2(M, dV_g) 
\end{equation*}
where $dV_g$ is the Riemannian volume measure generated by $g$.  We call $\phi(t)$ the wavefunction of the particle at time $t.$  In the semiclassical limit, this dynamic can be related to the free motion of a classical particle on $M$, namely the geodesic flow.  The geodesic flow $g^t$ occurs in $T^*M$, whose points are denoted by $(x;\xi)$, and is actually the Hamiltonian flow associated to the (Hamiltonian) function $\frac{\|\xi\|_g^2}{2}$.  Thanks to being at energy $1/2$, and a homogeneity property, we consider can $g^t$ on the unit cotangent bundle $S^*M = \{\|\xi\|_g^2 = 1\}.$

Another assumption we make in this article is that $g^t$ exhibits the ``ideal" chaotic behavior: that is, it satisfies the Anosov property. The theory of dynamical systems shows such flows are ergodic and mixing with respect to the Liouville measure on $S^*M.$  Anosov flows have been studied extensively and encompass a large body of work.  To repeat what was expressed earlier: one objective of quantum chaos is to understand the localization properties of eigenfunctions of $P(\h)$ at energies close to $1/2$ in the presence of such classical dynamics. The semiclassical regime then amounts to studying the eigenfunctions $\phi_j$ of $-\Delta_g$ with eigenvalues close to $\h_j^{-2}$ (or rather, frequencies close to $\h_j^{-1}$), and letting these values tend to $\infty.$

The Quantum Ergodicity Theorem \cite{Sch74, CdV85, Zel87} shows that if the geodesic flow is ergodic (thus including those that are Anosov), then almost all the high-frequency eigenfunctions $\phi_j$ are asymptotically equidistributed over $M$.  A more precise consequence is the following: there exists a density-one subsequence $\{j_k\}_k \subseteq \mathbb{N}$ such that the probability measures $|\phi_{j_k}(x)|^2 dV_g(x)$ (after appropriately normalizing our eigenfunctions) weakly converge to the normalized $dV_g(x)$.  Actually, this phenomenon occurs in a more general sense (using Wigner measures) to be described shortly.  Versions of this result in settings with boundary, and concerning Dirichlet eigenfunctions, are due to G\'erard-Leichtnam \cite{GL93} and Zelditch-Zworski \cite{ZZ96}.

It is natural to ask about the localization properties of the remaining zero-density subsequences of eigenfunctions.  The \textit{Quantum Unique Ergodicity (QUE)} conjecture of Rudnick-Sarnak \cite{RS94} states that on manifolds $M$ with negative sectional curvatures, the entire sequence of eigenfunctions equidistributes.  At present, only a few cases of QUE have been successfully resolved: so-called Arithmetic QUE proven by Lindenstrauss \cite{Lin06} in the compact case, Soundararajan \cite{S10} in the non-compact case, and Silberman-Venkatesh for locally symmetric spaces \cite{SV07}.  These results take place on congruence surfaces that carry additional number-theoretic structure and admit operators which enjoy these symmetries, namely Hecke operators.  Specifically, Arithmetic QUE holds for joint eigenfunctions of the Laplace-Beltrami operator and the Hecke operators.  We highlight the role \textit{entropy} plays in the proofs of Lindenstrauss and Silberman-Venkatesh, a concept which is central to this article and will be discussed more shortly.  An analogue of Arithmetic QUE was established by Kurlberg-Rudnick \cite{KR20} for the model of the quantum cat map on the 2-torus.  See the work of Zelditch \cite{Zel92} for QUE in the context of random bases on spheres, as well as Kelmer \cite{Kel10} and Hassell \cite{Hass10} for counterexamples in other settings. 

The QUE conjecture, stated using semiclassical formalism, is believed to hold more generally for \textit{semiclassical measures}, which themselves are defined through Wigner distributions $W_{\h}$.  The $W_{\h}$ are quantities on the cotangent bundle $T^*M$ that simultaneously describe the localization of an $L^2(M,dV_g)$-normalized sequence $\{\Psi_{\h}\}_{\h}$ in position and momentum, and are an important object in this article.  The Wigner distribution associated to a $L^2(M, d V_g)$-normalized sequence of states $\{\Psi_{\h}\}_{\h}$ is defined as
\begin{equation*} 
W_{\h}(a) := \langle \Op_{\h}(a) \Psi_{\h}, \Psi_{\h} \rangle
\end{equation*}
where $a \in C^{\infty}_0(T^*M)$ and $\Op_{\h}(\bullet)$ is a quantization procedure that sends $a$ to a particular bounded operator on $L^2(M, d V_g)$; this procedure is described in further detail in Appendix \ref{sect:semiclassics}.  We call \textit{semiclassical measures} $\mu_{sc}$ the limit points of the sequence $\{W_{\h}\}_{\h}$ in the weak-* topology on the space of distributions.  It is well-known \cite{Z12} that semiclassical measures $\mu_{sc}$ associated to eigenfunctions of $P(\h)$ are $g^t$-invariant probabilty measures and are supported on, what we rename as, the energy shell 
\begin{align} \label{eqn:energy_shell}
\mathcal{E} := S^*M = \{ (x,\xi) \in T^*M \, | \, \|\xi\|_{g(x)} = 1 \}.
\end{align}
QUE in its fullest strength states the following: in the case of eigenfunctions on manifolds with negative sectional curvatures, $W_{\h}$ converges to the Liouville measure $dL$ on $\mathcal{E}$.  Note that the Quantum Ergodicity Theorem only guarantees this particular convergence of the Wigner distributions for a density-one subsequence. An important measure of localization, especially in the context of semiclassical measures, is the notion of \textit{Kolmogorov-Sinai entropy} and our main result (appearing in Section \ref{sect:main_results}) concerns this quantity. 

In closing this section, it is important to note that there are toy models in quantum chaos that demonstrate non-QUE behavior.  For example, quantum cat maps (without the arithmetic structure utilized in \cite{KR20}) satisfy quantum ergodicity \cite{BD96} yet do produce exceptional subsequences whose semiclassical measures are half-Lebesgue and half-atomic.  Such examples of subsequences were constructed by Faure-Nonnenmacher-de Bi\`evre \cite{FND03}.  It is widely suspected that large spectral multiplicities play a role in the phenomena demonstrated by these models, and it is this intuition that is our \textit{first motivation} for considering quasimodes.

\subsubsection{Quasimodes: Localization and Delocalization}

The Laplace-Beltrami operator on manifolds with Anosov geodesic flows is \textit{not expected} to have large spectral multiplicities, thus one might expect the equidistribution phenomena.  In response, one can artificially introduce ``large multiplicites" by considering functions that act as approximations to true eigenfunctions and are called \textit{quasimodes}. The order of approximation can be quantified with the notion of \textit{width} and is given in the following definition:

\begin{defn}[Quasimodes]\label{defn:qmodes}
For a given energy level $E>0$, we say that a semiclassical family $\{\Psi_{\h}\}_{\h}$ of $L^2$-normalized states is a family of \emph{quasimodes of $P(\h)=-\h^2 \frac{\Delta_g}{2}$ of central energy $E$} and \emph{width} $w(\h)$, if and only if there exists $\h_0(E)>0$ such that 
\begin{equation}\label{eqn:QM1}
\| (P(\h)- E) \Psi_{\h} \|_{L^2(M)}  \leq  w(\h)\,,\qquad \forall \h \leq \h_0 \,.
\end{equation}
\end{defn}

Normally $w(\h) = \mathcal{O}(1)$ in $\h$.  In this paper, we take $E=\frac{1}{2}$ and $w(\h) \rightarrow 0$ at rate to be specified in the next section.  First, note that linear combinations of eigenfunctions corresponding to eigenvalues in a $w(\h)$-width window form a class of examples. These act as stand-ins when our spectrum exhibits low multiplicities. Second, also note that quasimodes of width $o(\h)$ have semiclassical measures $\mu_{sc}$ also being $g^t$-invariant, probability, and supported on $\mathcal{E}$.

Our \textit{second motivation} for considering quasimodes comes from the difficulty of finding explicit expressions for eigenfunctions in the presence of chaotic classical dynamics.  Except for special systems like the quantum cat maps, we are unable to give even approximate expressions of eigenfunctions in Anosov systems.  However, one can actually construct explicit quasimodes of certain widths on compact manifolds with Anosov geodesic flows.

A trio of results due to Brooks \cite{Br13}, Eswarathasan-Nonnenmacher \cite{EN17}, and Eswarathasan-Silberman \cite{ES18} show how spectral degeneracy (in the proposed form of \`a la quasimodes) can lead to phenomena other than equidistribution, as inspired by the work of Faure-Nonnenmacher-de Bi\`evre \cite{FND03} on the quantum cat map.  In particular on a compact surface having a closed hyperbolic geodesic, generalizing Brooks' result, \cite{EN17} constructs a sequence of quasimodes of width $C \frac{\h}{|\log \h|} $ with central energy $E$ and whose semiclassical measure is a delta function on that geodesic; in fact, this result continues to hold for more general quantum Hamiltonians whose associated classical dynamics has at least one closed hyperbolic orbit.  The works \cite{Br13, ES18} are particular to constant negative curvature with that of Eswarathasan-Silberman holding in higher dimensions but Brooks' ideas proving to be robust enough for applications to $L^p$ norms \cite{HuSog24}.

In fact, the results of Brooks, Eswarathasan-Nonnenmacher, and Eswarathasan-Silberman were motivated in part by the works of Anantharaman \cite{An08}, Anantharaman-Koch-Nonnenmacher \cite{AN07, AKN06}, Rivi\`ere \cite{Riv10}, Anantharaman-Silberman \cite{AS12}, and Brooks-Lindenstrauss \cite{BL14}.  In the second listed of these inspiring works, it was specified that semiclassical measures associated to quasimodes of width $o \left( \frac{\h}{|\log \h|} \right)$ must satisfy the same ``half-delocalization" constraints as those associated with eigenfunctions through the use of entropy. The works of Dyatlov-Jin-Nonnenmacher \cite{DJ18, DJN19}, underpinned by an earlier breakthrough of Bourgain-Dyatlov \cite{BD18} for hyperbolic surfaces, show the full support of any $\mu_{sc}$ holds for quasimodes of width $o \left( \frac{\h}{|\log \h|} \right)$.  Thus, for such Anosov flows, the logarithmic scale $c \frac{\h}{|\log \h|}$ is a sort of critical value when considering ``delocalization" in the context of these results.  This puts us in a position to introduce our article's main objects of study:

\begin{defn}[$\epsilon$-logarithmic modes] \label{defn:log_modes}
For a given energy level $E>0$ and width parameter $\epsilon>0$, we say that a semiclassical family $\{\Psi_{\h}\}_{\h}$ of $L^2$-normalized states is a family of \textit{$\epsilon$-logarithmic modes} for $P(\h)=-\h^2 \frac{\Delta_g}{2}$ if for the intervals
\begin{equation*}
I(\epsilon,\h) := \left[ E - \epsilon \, \frac{\h}{|\log \h|} , E + \epsilon \, \frac{\h}{|\log \h|} \right]
\end{equation*}
and an $\h_0(E, \epsilon)>0$ such that for all $\h \leq \h_0$, the modes $\Psi_{\h}$ are spectrally supported in $I(\epsilon, \h)$: that is, $\Pi_{I(\h)^{\complement}} \Psi_{\h} = 0$.  In other words, $\{\Psi_{\h}\}_{\h}$ is a $L^2$-normalized family consisting of linear combinations of eigenfunctions whose eigenvalues sit inside of $I(\epsilon,\h)$ therefore making it a family of quasimodes for $P(
h)$ with energy $E$ and width $w(\h) = \epsilon \frac{\h}{|\log \h|}$.
\end{defn}

\subsection{Main result} \label{sect:main_results}

Recall that $(M,g)$ is a $d$-dimensional, compact, and boundaryless manifold whose geodesic flow $g^t$ on $\mathcal{E}$ is Anosov.  To better understand the various semiclassical measures $\mu_{sc}$ of $\epsilon$-logarithmic modes, we aim to compute their \textit{Kolmogorov-Sinai entropies}.  The Kolmogorov-Sinai entropy of a $g^t$-invariant probability measure $\mu$ is a non-negative number $H_{KS}(\mu)$ describing the long-time, average information carried by $\mu$-typical geodesics.  A more precise definition of  $H_{KS}$ is provided in Section \ref{sect:entropy_quasiproj} of the appendix with further background found in \cite[Chapter 9]{BS}.  

A classical upper bound on $H_{KS}$ for a $g^t$-invariant probability measure $\mu$ is to due to Ruelle \cite{Rue78}:
\begin{equation*}
H_{KS}(\mu) \leq  \int_{\mathcal{E}} \log J^u(\rho) \, d \mu(\rho).
\end{equation*}
Here, 
\begin{equation*}
J^u(\rho) : = |\det D(g^1)_{\upharpoonright E^u(\rho) }|
\end{equation*}
is the \textit{unstable Jacobian for $g^1$ at $\rho \in \mathcal{E}$} with $E^u(\rho)$ being the unstable subspace at $\rho \in \mathcal{E}$; recall that in our geometry, $\mathcal{E}$ admits a foliation into unstable manifolds of the flow \cite[Chapter 5]{BS}. Although $\log J^u(\rho)$ depends on the metric structure on $\mathcal{E}$, its average over any invariant measure does not and is always positive.  Note that the unstable Jacobian takes a slightly different definition in the previous works \cite{An08, AN07, AKN06, Riv10} and that we use the formulation given in \cite[Section 1.2]{Non12}.  

In the case of Anosov systems \cite{LY85}, equality in Ruelle's upper bound occurs if and only if $\mu = dL$.  When the sectional curvatures $\mathcal{K}(M) = -1$, the measure of maximal entropy is $dL$ and thus $H_{KS}(\mu) \leq d-1$ for any $g^t$-invariant probability measure $\mu$. Thus, in this case, the QUE conjecture can be reformulated as claiming $H_{KS}(\mu_{sc}) = d-1$.

Now, define
\begin{equation} \label{eqn:max_exp_rate}
\lambda_{\max} := \lim_{t \rightarrow  \infty} \sup_{\rho \in \mathcal{E}} t^{-1} \log \|D(g^t)\|
\end{equation}
to be the \textit{maximal expansion rate on $\mathcal{E}$}.  Next, we let 
\begin{equation} \label{eqn:LambdaMax}
\Lambda := \sup_{\rho \in \mathcal{E}} \log J^u(\rho), 
\end{equation}
a quantity related to the maximal volume expanding rate of the unstable manifold; see also \cite[Theorem 1.1.1.]{An08}.  And lastly, let $H_{top}$ be the topological entropy of $g^1_{\upharpoonright \mathcal{E}}.$  Our main result is:
\begin{theo} \label{thm:main}
Let $(M,g)$ be a $d$-dimensional compact, boundaryless, Riemannian manifold whose geodesic flow on $\mathcal{E}$ is Anosov.  For all $\epsilon <  1$, and any $\epsilon$-logarithmic family $\{ \Psi_{\h} \}_{\h}$ with energy $\frac{1}{2}$ and semiclassical measure $\mu_{sc}$, 
\begin{align*}
   \nonumber  H_{KS}(\mu_{sc})  \geq  \int_{\mathcal{E}} \log \left( J^u(\rho) \right) \, & d\mu_{sc}(\rho) - \frac{(d-1)\lambda_{\max}}{2} \\ 
   &  -  \frac{3}{2}  \, H_{top}  \, \frac{\epsilon}{\lambda_{\max}} - \Lambda\, \left( \frac{2 \epsilon}{\lambda_{\max}} +  \left( \ \frac{\epsilon}{\lambda_{\max}}\right)^{2} \right).
\end{align*}
\end{theo} 

\begin{cor}
    Under the hypotheses of Theorem \ref{thm:main} with $(M,g)$ being a hyperbolic surface (that is $d=2$ with $\mathcal{K}(M)=-1$), we have positive entropy for $\epsilon < \epsilon_0 \approx .137 .$
\end{cor}

\begin{rem} Our proof can be used to recover the main results of \cite{AN07,AKN06} when $\epsilon=o(1)$ in $\h$.  However, the novelty of this result is the following. Recall the results of Brooks, Eswarathasan-Nonnenmacher, and Eswarathasan-Silberman \cite{Br13, EN17, ES18} discussed in the previous section.  Our main result demonstrates that such strong localization (leading to zero entropy) cannot occur for our logarithmic modes, particularly when $\epsilon$ is sufficiently small depending on the given dynamical quantities. 
\end{rem}

\begin{rem}
While Theorem \ref{thm:main} takes place in the context of the semiclassical Laplace-Beltrami operator $-\h^2 \frac{\Delta_g}{2}$, the result should generalize with relative ease to other quantum Hamiltonians $P(\h)$ satisfying some basic non-degeneracy and spectral conditions near a fixed energy $E>0$ and whose symbol generates an Anosov Hamiltonian flow.  Furthermore, our result should also extend to $\epsilon$-logarithmic modes but associated to central energies $E_{\h} \sim E$.  In fact, such an extension would show that the partially localizing modes associated to energies $E_{\h}$ also constructed in \cite{Br13, EN17, ES18} exhibit positive entropy. For clarity of exposition, we do not pursue this here due to the current level of technicality. 
\end{rem}

\subsection{Outline of the proof} 

We review the ideas of Anantharaman and Nonnenmacher and then describe our new contributions, ignoring some technical parameters for now.  All of the upcoming objects are rigorously defined in Sections \ref{sect:quantum_measures}-\ref{sect:quant_entropies_EUP} and Appendix B.

\subsubsection{Review of \cite{AN07, AKN06}} First, let $n_0 \in \mathbb{N}$ and consider $q \in \mathbb{N}$ such that $qn_0 \leq \lfloor \frac{|\log \h|}{\lambda_{\max}} \rfloor$, the right-hand being twice the ``Ehrenfest time".  Next, for $K \in \mathbb{N}$, set $\Sigma(qn_0) = \{1,\dots,K\}^{qn_0}$ and consider this as the set of words of length $qn_0$ on $K$ letters.  These will correspond to particular dynamical quantities, specifically $g^{t}$-refined partition elements at time $qn_0$.

One can attach a bounded operator $\Pi_{\bm{\beta}}$ to such a $qn_0$-length word $\bm{\beta}$ and which serves as a ``refined" element appearing in a quantum partition of unity.  Moreover, the quantity $\| \Pi_{\bm{\beta}} \Psi_{\h} \|^2$ is the quantum analogue of a $g^t$-invariant probability measure evaluated at the word $\bm{\beta}$.  These notions allow us to analyse a type of quantum entropy at time $qn_0$, namely $H_0^{qn_0-1}(\Psi_{\h})$.  Finally, let $J_{\bm{\beta}}^u$ be a discretized version of the unstable Jacobian $J^u(\rho)$ associated to the cylinder $\bm{\beta}$.  

The work of Anantharaman-(Koch)-Nonnenmacher \cite{AKN06, AN07} establishes that within the described regime of $n_0$ and $\h$ (modulo errors in a variety of parameters including $\h$ and using some invariance statements),
\begin{align*}
H_0^{qn_0-1}(\Psi_{\h}) - \sum_{\bm{\beta} \in \Sigma(qn_0)} \log(J_{\bm{\beta}}^u) \| \Pi_{\bm{\beta}} \Psi_{\h} \|^2 \geq -\frac{(d-1)\lambda_{\max}}{2} qn_0
\end{align*}
thanks to a combination of the entropic uncertainty principle (Section \ref{sect:EUP}), which involves a time symmetrization, and the hyperbolic dispersive estimate (Appendix, Section C \ref{sect:hyp_disp}).  In order to make rigorous the passage from quantum entropies at long times $H_0^{qn_0-1}(\Psi_{\h})$ to actual metric entropy for $\mu_{sc}$ at finite times $n_0$, one uses a sub-additivity argument to reduce the above inequality (through the ``entropic uncertainty principle" that involves a time symmetrization) to 
\begin{align*}
 & H_0^{n_0-1}(\Psi_{\h}) + \sum_{j=1}^{q-1} H_0^{n_0-1}(e^{-i \frac{ jn_0P(\h)}{\h}}\Psi_{\h})   - q\sum_{\bm{\beta} \in \Sigma(n_0)} \log(J_{\bm{\beta}}^u) \| \Pi_{\bm{\beta}} \Psi_{\h} \|^2 \\
 & - \sum_{\bm{\beta} \in \Sigma(n_0)} \log(J_{\bm{\beta}}^u) \sum_{j=1}^{q-1} F_1(\h,  \epsilon, \bm{\beta}, jn_0) \geq -\frac{(d-1)\lambda_{\max}}{2} qn_0
\end{align*}
where the operators $\Pi_{\bm{\beta}}$ are pseudodifferential with now $\h$-independent principal symbols, thanks to the word lengths being independent of $\h$, and $F_1(\h, \epsilon, \bm{\beta}, jn_0) =   \| \Pi_{\bm{\beta}} e^{-i \frac{ jn_0P(\h)}{\h}} \Psi_{\h} \|^2 - \| \Pi_{\bm{\beta}} \Psi_{\h} \|^2 $.  At this stage one can make further use of semiclassical methods.  One goes from these quantum entropies to metric entropies after dividing both sides by $qn_0$ and carefully taking the limits $\h \rightarrow 0$, $n_0 \rightarrow \infty$, and $K \rightarrow \infty$ (amongst others limits of parameters we previously stated we would ignore).

\subsubsection{New contributions}
When $\Psi_{\h}$ is an eigenfunction, we have 
\begin{align*}
\sum_{j=1}^{q-1} H_0^{n_0-1}(e^{-i \frac{ jn_0P(\h)}{\h}}\Psi_{\h}) = (q-1) H^{n_0-1}_0(\Psi_{\h}) \mbox{ and } \sum_{\bm{\beta} \in \Sigma(n_0)} \log(J_{\bm{\beta}}^u) \sum_{j=1}^{q-1} F_1(\h, \epsilon, \bm{\beta}, jn_0) = 0
\end{align*}
modulo small errors (occuring in the equalities) in various parameters, including $\h$.  \textit{In the case of $\epsilon$-logarithmic modes, these quantities are more complicated with part of our article showing how to control them in $\epsilon$ and other variables.}  The generalized sub-additivity statements incorporating the additional spectral mass are given in Section \ref{sect:subadd}.  The majority of Section \ref{sect:defect} is dedicated to estimating the defect
\begin{align*}
D(h,\epsilon, n_0) := q^{-1} \sum_{j=0}^{q-1} \left( H_0^{n_0-1}(e^{-i \frac{ jn_0P(\h)}{\h}}\Psi_{\h}) - H_0^{n_0-1}(\Psi_{\h}) \right)
\end{align*}
by using a total-variation-type metric to quantify the distance between the probability measures generated by $\Psi_{\h}$ and $e^{-i \frac{ jn_0P(\h)}{\h}} \Psi_{\h}$.  We spend the remainder of that section estimating our new ``potential" terms $\sum_{\bm{\beta} \in \Sigma(n_0)} \log(J_{\bm{\beta}}^u) \sum_{j=1}^{q-1} F_1(\bullet) $.

We end this section by emphasizing that the main reason for considering the types of spectral localization seen in Definition \ref{defn:log_modes} is simply to guarantee  $WF_{\h}(\Psi_{\h}) \subseteq \mathcal{E}$ and then easily employ the so-called entropic uncertainty principle (see Section \ref{sect:quant_entropies_EUP}).  Additionally, $\epsilon$-logarithmic modes are slightly more convenient when using spectral calculus at various places in our argument.  This being said, our results should generalize to quasimodes of width $\epsilon \frac{\h}{|\log \h|}$ with more work.

\bigskip

\noindent \textbf{Acknowledgements:}  SE would like to thank Lior Silberman, Gabriel Rivi\`ere, Filipo Morabito, Ze\'ev Rudnick, and Anthony Quas for various discussions leading to the final version of this article.  SE warmly thanks St\'ephane Nonnenmacher for the multiple ways he has helped: providing an informative counterexample for quantum cat maps that motivated SE to relinquish an assumption in the first version of this article, a crucial discussion (amongst others) which led to the writing of Section \ref{sect:defect}, and being supportive of the completion of the final version of this article.  SE was funded by the NSERC Discovery Grant program.

\section{List of various parameters} \label{sect:param_list}

We record a list of important parameters that appear frequently throughout our analysis.  Note that many have been given subscripts in word form for easier associations.  References to the appendices are given for parameters already appearing in \cite{AN07, Non12}.

\begin{itemize}
\item $\epsilon>0$ governs the width of our quasimode in Definition \ref{defn:log_modes}. 
\item $K$ is the number of elements in our initial partition $\mathcal{P}$.  In \cite{AN07}, $K \simeq \zeta^{-d}$ for $\zeta$ a small and positive parameter that allows one to replace finite sums over the initial partition $\mathcal{P}$ with integrals.  See Definition \ref{defn:classical_entropy}.  
\item $n_0 \in \mathbb{N}$ is our fixed and finite time, independent of $\h$, for the dynamics of $\mu_{sc}$.  This is eventually taken arbitrarily large.
\item $\h_0> 0 $, such that $\h \leq \h_0$, is determined by a number of restrictions including the quasimode estimate.  This will depend on most of the parameters listed below.
\item $\kappa_{slab}>0$ governs the width of our neighbourhood around $\mathcal{E}$, namely $\mathcal{E}_{\kappa_{slab}}$.  See Definition \ref{def:smooth_partition}.  We eventually set $\kappa_{slab}=K^{-1}.$
\item $\delta_{energy} > 0$ is a free parameter first appearing in the width of our energy cutoff $\chi^{(n)}(P(\h) - \onehalf)$.  This cutoff appears in the hyperbolic dispersive estimate, the parameter itself appearing in the loss incurred in $\h$.   See equation (\ref{eqn:small_scale_cutoff}) and Proposition \ref{prop:hyp_disp}.  The parameter will play a role in Section \ref{sect:proj_micro_meas}. 
\item $\delta_{Ehr} \in (0,1)$ is a small parameter.  It appears in the expression for the Ehrenfest time: $T_{\delta_{Ehr},\kappa_{slab}, \h} = \frac{(1-\delta_{Ehr}) |\log \h|}{2 \lambda_{\max}(\kappa_{slab})}$.  Note this has an effect on the regularity of our symbols.  In Sect \ref{sect:quantum_measures}), we set $\delta_{Ehr}=\delta_{energy} < \frac{2 \lambda_{max}}{100(1+2 \lambda_{max})}$ in order to have $1 > \delta_{energy}(1 + \frac{1 - \delta_{Ehr}}{2})$, itself satisfying the hypotheses of the hyperbolic dispersive estimate.  See Definition \ref{defn:Ehrenfest}.
\end{itemize}

\section{Microlocalization} \label{sect:proj_micro_meas}

Recall that on compact $(M,g)$, the spectrum of $P(\h) = -\h^2 \frac{\Delta_g}{2}$ is discrete and infinite, which we denote by $\{E_j(\h)\}_{j \in \N}$.  Our space $L^2(M, dV_g)$ admits an orthonormal basis of eigenfunctions, whose individual elements we write as $\psi_j$.  In particular, if $\Psi_{\h}$ is $L^2$-normalized, then 
\begin{equation*}
\Psi_{\h} = \sum_j c_j(\h) \, \psi_j
\end{equation*}
where $\sum_j c_j(\h)^2 = 1$. 

\subsection{Microlocalization properties} \label{sect:micro_props}
The main statement of this section is a lemma on the wavefront set for our $\epsilon$-logarithmic modes.  We reproduce the definition of $\chi^{(n)}$, an important cutoff to be quantized and use repeatedly throughout our note, from Appendix \ref{sect:aniso_sharp_cutoffs}.

Let  $\delta_{energy}>0$ be given and $n \leq C_{\delta_{energy}} |\log \h|$ where $C_{\delta_{energy}} + 1 < \delta^{-1}_{energy}$.  Define $\chi_{\delta_{energy}}(s)$ equal to 1 for $|s| \leq e^{-\delta_{energy}/2}$ and 0 for $|s| \geq 1$.  From here we set
\begin{equation} \label{eqn:small_scale_cutoff}
\chi^{(n)}(s,\h) := \chi_{\delta_{energy}}(e^{-n\delta_{energy}} h^{-1+\delta_{energy}} s).
\end{equation}

\begin{lem}[$\epsilon$-logarithmic modes are ``microlocalized near $\mathcal{E}$ at scale $\h^{\gamma}$"] \label{lem:strong_micro}
Consider $\chi^{(n)}$ as in (\ref{eqn:small_scale_cutoff}) and $\delta_{energy}>0$ be given.  Let $\{\Psi_{\h}\}_{\h}$ be a family of $\epsilon$-logarithmic modes and $C_{\delta_{energy}} < (\delta_{energy})^{-1} -1 $.  Then there exists $\h_0(\epsilon, \delta_{energy})$ such that for all $\h \leq \h_0$ and $n \leq C_{\delta_{energy}} |\log \h|$, we have
\begin{equation*}
\left( Id - \chi^{(n)} \left( P(\h)- \frac{1}{2} \right) \right) \Psi_{\h} = 0
\end{equation*}
where $\chi^{(n)}(P(\h)- \onehalf)$ is defined through spectral calculus. Furthermore, $WF_{\h}(\Psi_{\h}) \subseteq \mathcal{E}$.
\end{lem}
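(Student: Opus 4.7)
The plan is to pass to the eigenbasis of $P(\h)$ and reduce the statement to a scalar estimate on $\chi^{(n)}$. Since $P(\h)$ is self-adjoint with discrete spectrum $\{E_j(\h)\}_j$ and $L^2$-orthonormal eigenbasis $\{\psi_j\}_j$, an $\epsilon$-logarithmic mode $\Psi_{\h}=\sum_j c_j(\h)\psi_j$ has $c_j(\h)=0$ unless $E_j(\h)\in I(\epsilon,\h)$. Applying $\chi^{(n)}(P(\h)-E)$ via spectral calculus gives
\begin{equation*}
(Id-\chi^{(n)}(P(\h)-E))\Psi_{\h}=\sum_{E_j(\h)\in I(\epsilon,\h)} c_j(\h)\,(1-\chi^{(n)}(E_j(\h)-E))\,\psi_j,
\end{equation*}
so it suffices to show $\chi^{(n)}(t)=1$ for all $|t|\leq \epsilon\,\h/|\log \h|$ whenever $n\leq C_{\delta_{long}}|\log \h|$ and $\h$ is small enough.

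The key step is to unpack the definition of $\chi^{(n)}$ from (\ref{eqn:small_scale_cutoff}) and Appendix \ref{sect:aniso_sharp_cutoffs}. This is an anisotropic sharp cutoff whose plateau $\{\chi^{(n)}=1\}$ has width of order $\h^{\delta_{long}}$ up to a controlled geometric deterioration in $n$. The assumption $C_{\delta_{long}}<(\delta_{long})^{-1}-1$, equivalently $\delta_{long}(1+C_{\delta_{long}})<1$, is precisely what guarantees that, for all $n\leq C_{\delta_{long}}|\log \h|$,
\begin{equation*}
\h^{\delta_{long}(1+C_{\delta_{long}})}\ \gg\ \epsilon\,\h/|\log \h|\qquad(\h\to 0).
\end{equation*}
Choosing $\h_0(\epsilon,\delta_{long})$ small enough to absorb $\epsilon$ and the $|\log \h|$ factor, the spectral window $[-\epsilon\,\h/|\log \h|,\,\epsilon\,\h/|\log \h|]$ sits strictly inside $\{\chi^{(n)}=1\}$, so every coefficient $1-\chi^{(n)}(E_j(\h)-E)$ in the display above vanishes. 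This proves the spectral-calculus version, with exact zero on the right-hand side.

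To upgrade to the pseudodifferential realisation of $\chi^{(n)}(P(\h)-E)$, I would invoke the Helffer-Sj\"ostrand almost-analytic functional calculus in the form employed in \cite{AN07,Non12}: in the admissible range $n\leq C_{\delta_{long}}|\log \h|$ (the same range in which the entire Ehrenfest-time machinery of the paper is valid) the two realisations of $\chi^{(n)}(P(\h)-E)$ coincide on $L^2$ up to $\mathcal{O}(\h^\infty)$, which yields the stated $\mathcal{O}(\h^\infty)$ bound. The wavefront-set inclusion $WF_{\h}(\Psi_{\h})\subset\mathcal{E}$ then follows as a corollary: for $a\in C_c^\infty(T^*M)$ with $\supp a\cap\mathcal{E}=\emptyset$, the principal symbol of $\chi^{(n)}(P(\h)-E)$ vanishes on $\supp a$, so $\Op_{\h}(a)\,\chi^{(n)}(P(\h)-E)=\mathcal{O}(\h^\infty)$ by symbolic composition, and combining with the previous step gives $\Op_{\h}(a)\Psi_{\h}=\mathcal{O}(\h^\infty)$.

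The only real obstacle I foresee is purely notational, namely carefully reading off from (\ref{eqn:small_scale_cutoff}) the precise $n$-dependent width of the plateau of $\chi^{(n)}$ and verifying that the scalar comparison $\epsilon\,\h/|\log \h|\ll\h^{\delta_{long}(1+C_{\delta_{long}})}$ really does sit below it; once that is pinned down, the lemma follows without further analytic input.
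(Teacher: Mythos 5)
Your proposal follows essentially the same route as the paper: pass to the eigenbasis via spectral calculus, observe that the quasimode has support in $I(\epsilon,\h)$, and compare the width $\epsilon\h/|\log\h|$ of $I(\epsilon,\h)$ against the plateau of $\chi^{(n)}$, then note that the pseudodifferential and spectral realisations of $\chi^{(n)}(P(\h)-E)$ agree to $\mathcal{O}(\h^\infty)$.

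One small bookkeeping slip worth correcting: from (\ref{eqn:small_scale_cutoff}), $\chi^{(n)}(s)=\chi_{\delta_{long}}\bigl(e^{-n\delta_{long}}\h^{-1+\delta_{long}}s\bigr)$, so the plateau $\{\chi^{(n)}=1\}$ has half-width $e^{(n-1/2)\delta_{long}}\,\h^{1-\delta_{long}}$; the worst case is $n=0$, giving width $\sim\h^{1-\delta_{long}}$, not $\h^{\delta_{long}}$ or $\h^{\delta_{long}(1+C_{\delta_{long}})}$ as written. The hypothesis $C_{\delta_{long}}<\delta_{long}^{-1}-1$ is used to keep the \emph{support} of $\chi^{(n)}$ shrinking (so the cutoff remains a valid anisotropic symbol), not to establish the plateau estimate. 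That said, since the relevant exponent $1-\delta_{long}$ lies in $(0,1)$, the key comparison $\h^{1-\delta_{long}}\gg\epsilon\,\h/|\log\h|$ as $\h\to0$ holds anyway, so the argument is sound once the exponent is corrected.
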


\begin{rem}
  If one uses the second-microlocal calculus defined in \cite{SjZw99} (see also Section \ref{sect:aniso_sharp_cutoffs}) and whose quantization procedure is denoted by $\Op_{\h,\mathcal{E}} \left( \bullet \right)$, then $\chi^{(n)}(P(\h)- \onehalf) - \Op_{\mathcal{E}, \h} \left(  \chi^{(n)}( \|\xi \|^2 - \frac{1}{2} ) \right) = \mathcal{O}(\h^{\infty})$ as $\h \rightarrow 0.$
\end{rem}

\begin{proof}[Proof of Lemma \ref{lem:strong_micro}]
This is a simple computation via spectral calculus after using the support properties of $\chi^{(n)}$.  Thanks to $\chi^{(n)}\left( P(\h) - \frac{1}{2} \right)$ being a function of $P(\h)$, we have 
\begin{equation*}
\chi^{(n)} \left( P(\h) - \frac{1}{2} \right) \Psi_{\h} = \sum_{E_j \in I(\epsilon, h)} \chi^{(n)} \left( P(\h) - \frac{1}{2} \right) c_j \psi_j
\end{equation*}
where $\psi_j$ is an eigenfunction with eigenvalue $E_j$.  Thus, $\sum_{E_j \in I^{\complement}(h)} \left( 1- \chi^{(n)}  \left( E_j - \frac{1}{2} \right) \right) c_j \psi_j = 0 $ as $c_j = 0$ thanks to $\chi^{(n)}$ having width less than a fixed multiple of $h^{1-(1+C_{\delta_{energy}})\delta_{energy}}$ and $I$ having width $\frac{\h}{|\log \h|}$, where $(1+C_{\delta_{energy}})\delta_{energy} < 1$.

The wavefront set statement follows from the use of the second-microlocal calculus in the form of Lemma \ref{lem:disj_supp}: given any $a \in S^0$ such that $\supp a \cap \mathcal{E} = \emptyset$, $\Op_{\mathcal{E}, \h}(a) \Psi_{\h} = \Op_{\mathcal{E}, \h}(a) \chi^{(n)} \left( P(\h) - \frac{1}{2} \right) \Psi_{\h} + \mathcal{O}(\h^{\infty}).$ But as $\h \rightarrow 0$, $\Op_{\mathcal{E}, \h}(a) \chi^{(n)} \left( P(\h) - \frac{1}{2} \right) = \mathcal{O}(\h^{\infty})$ thanks $\mathcal{E}$ being compact.
\end{proof}

\begin{rem}
Note that for more general quasimodes of width say $\mathcal{O} \left( \frac{\h}{|\log \h|} \right)$, it is not clear that the wavefront set is a subset of $\mathcal{E}$, which allows for an $\mathcal{O}(h^{N})$ $L^2$ estimate microlocally away from $\mathcal{E}$ for some $N$ sufficiently large. In fact, one gets the same $\mathcal{O} \left( \frac{\h}{|\log \h|} \right)$ estimate on $\Psi_{\h}$ away from the support of $\chi^{(n)}(P(\h) - E)$ if using standard parametrix arguments.  This will be important in applying the entropic uncertainty principle found in Proposition \ref{prop:EUP}. 
\end{rem}

\section{Quantum measures} \label{sect:quantum_measures}

This section heavily uses the material, particularly the notation, set up in Appendix \ref{sect:entropy_quasiproj}.  If the reader is unfamiliar with the previous works \cite{AN07, AKN06, Non12}, it is suggested they familiarize themselves with various dynamical and operator-theoretic quantities (and the passages between them) including the initial dynamical partition $\mathcal{P}$,  the words $\bm{\alpha} \in \Sigma^n$, the operators $\Pi_{\bm{\alpha}}$, and the smoothed quantum partition $\mathcal{P}_{\rm{sm,q}}$.  These can be found in Appendix \ref{sect:entropy_quasiproj}, particularly Appendices \ref{sect:entropy_smooth} and \ref{sect:quasiprojs}.  Note that Proposition \ref{prop:pou_quasi} will play an important role in this section.  

We specify our choice of $\mathcal{P}$, described also in Appendix \ref{sect:dynam_defs}: we first take an open cover $\{ \mathscr{O}_k'\}_{k'}$ of $M$ with $M = \sqcup_{k=1}^K E'_k$, a partition consisting of Borel sets $E_k' \subseteq \mathscr{O}_{k}'$.  Next, we take the elements $E_k$ as $\pi^{-1}(E'_k)$ where $\pi: \mathcal{E} \rightarrow M$ is the canonical projection.  Note that there exists $\zeta_g > 0$ such that if $diam \{ \mathscr{O}_k' \}_k \leq \zeta_g$, then $\mathcal{P}^{\vee 2}$ is generating \cite[Page 7]{AKN06}, so we assume that $K = \lfloor \zeta_g^{-1} \rfloor.$  The diameter of $\mathcal{P}$ is allowed to go to 0 in \cite{AN07, Non12, AKN06}, with this diameter playing a non-trivial role in our work and thus requiring us to pay close attention in Section \ref{sect:final_steps}.

We recall an important time scale in semiclassical analysis.  Let $\delta_{Ehr}, \kappa_{slab}>0,$ and $\h>0$ be given; from here onward we set $\delta_{energy}=\delta_{Ehr} < \frac{2 \lambda_{max}}{100(1+2 \lambda_{max})}$ as in the hypothesis of Lemma \ref{lem:strong_micro} and (\ref{eqn:max_exp_rate}).  We call
\begin{equation*} 
T_{\delta_{Ehr}, \kappa_{slab}, \h} := \frac{(1-\delta_{Ehr})}{2 \lambda_{\max}(\kappa_{slab})} |\log \h|
\end{equation*}
the \textit{Ehrenfest time} on the energy slab $\mathcal{E}_{\kappa_{slab}} = \{ (x,\xi) \in T^*M \, | \, \frac{1}{2} - \kappa_{slab} \leq \| \xi \|_{g(x)} \leq \frac{1}{2} + \kappa_{slab} \}.$  The number $\lambda_{\max}(\kappa_{slab})$ is the maximal expansion rate across $\mathcal{E}_{\kappa_{slab}}$.

\subsection{Quantum measures and words}

We have a natural notion for a probability measure on the space of words $\Sigma^n$, as follows:

\begin{defn}[Quantum measures]
Let $\{\Psi_{\h}\}_{\h}$ be an $\epsilon$-logarithmic family and take $C_{\delta_{Ehr}}>0$.  Then for any $\bm{\alpha} \in \Sigma^n$ with $n \leq C_{\delta_{Ehr}} |\log \h|$, we write
\begin{equation*}
\mu_{\h}(\bm{\alpha}) := \| \Pi_{\bm{\alpha}} \Psi_{\h} \|^2
\end{equation*}
and call it the \textit{forward (quantum) measure}.  For $\alpha \in \Sigma^0$, define $\Pi_{\alpha}(-n) := U^{n} \Pi_{\alpha} U^{-n}$.  Analogous to the forward meausre, we define the \textit{backwards (quantum) measure} for $\{\Psi_{\h}\}_{\h}$ as
\begin{equation*}
\tilde{\mu}_{\h}(\bm{\alpha}) := \| \Pi_{\alpha_{-n}}(-n) \dots \Pi_{\alpha_{-1}}(-1)\Psi_{\h} \|^2,
\end{equation*}
\end{defn}
More formally, $\mu_{\h}$ and $\tilde{\mu}_{\h}$ can be seen as equivalence classes of $\h$-dependent probability measures on words of length $n \leq C_{\delta_{Ehr}} |\log \h|$, with the equivalence being up to $\mathcal{O}(\h^{\infty})$ quantities as $h \rightarrow 0$.  In other words, the value $\mu_{\h}(\bm{\alpha})$ is the weight of the corresponding cylinder in $\{1, \dots, K \}^{\mathbb{Z}_{\geq 0}}$.  For $n$ fixed, $\mu_{\h}(\bm{\alpha}) \rightarrow \mu_{sc}(\pi_{\bm{\alpha}})$ as $\h \rightarrow 0$ thanks to Egorov's Theorem for short times (that is, $\h$-independent times) with a similar formula for $\tilde{\mu}_{\h}$.  We will toggle between the notations of $\mu_{\h}(\bullet)$ and $\| \bullet \Psi_{\h} \|^2$ depending on the calculation.

We begin with a proposition, taken from \cite{AN07} but with their $\delta'=\delta_{Ehr}$ and their $\delta = \delta_{energy}$, that will be used repeatedly throughout the article:

\begin{prop} \label{p:strong_microlocal_quasi} \cite[Corollary 5.6]{AN07}
Let $\chi^{(n)}$ be the cutoffs considered in Appendix \ref{sect:aniso_sharp_cutoffs} and consider $C_{\delta_{Ehr}}>0$ is as in Lemma \ref{lem:strong_micro}.  Let $N >0$ be given.  There exists $\h_0(\delta_{Ehr}, n_0, N, K)$ such that for $\h \leq \h_0$, all $n \leq C_{\delta_{Ehr}} |\log \h|$, and $|\bm{\alpha}|=n$,
\begin{equation*}
    \| \left(I - \chi^{(n)} \left( P(\h) - \frac{1}{2} \right) \right) \Pi_{\bm{\alpha}} \chi^{(0)} \left( P(\h) - \frac{1}{2} \right) \| = \mathcal{O}(\h^{N}).
\end{equation*}
\end{prop}

Next, we give \cite[Equation (3.13)]{Non12} but encompassed in the form of the following: 

\begin{lem}[Compatibility conditions for logarithmic modes] \label{lem:compatible_meas}
Let $\{\Psi_{\h}\}_{\h}$ be an $\epsilon$-logarithmic family and $N>0$ be given.  Let $\delta_{Ehr} \in (0,1)$ and  $\kappa_{slab} >0$ be given.   Then there exists $\h_0(\epsilon, \delta_{Ehr}, \kappa_{slab}, K,N)$ such that for $\h \leq h_0$ and for all $ n \leq \lfloor 2 T_{\delta_{Ehr}, \kappa_{slab}, \h} \rfloor$ we have
\begin{align*}
& \sum_{\bm{\alpha} \in \Sigma^n} \| \Pi_{\bm{\alpha}} \Psi_{\h} \|^2 = 1 + \mathcal{O}_{ K}(\h^{N})
\end{align*}
and therefore $\forall \alpha_0, \dots, \alpha_{n-2} \in \Sigma^1$,
\begin{align*} 
\| \Pi_{\alpha_0,\dots, \alpha_{n-2}} \Psi_{\h} \|^2 = \sum_{\alpha_{n-1}}  \| \Pi_{\alpha_0,\dots, \alpha_{n-1}} \Psi_{\h} \|^2  + \mathcal{O}_{ K}(h^{N}).
\end{align*}
\end{lem}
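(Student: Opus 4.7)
The plan is to combine the partition-of-unity structure of the smoothed quantum partition $\mathcal{P}_{\rm sm,q}$ (Proposition \ref{prop:pou_quasi}) with the sharp wavefront localization $WF_{\h}(\Psi_{\h}) \subset \mathcal{E}$ provided by Lemma \ref{lem:strong_micro}. Recall that each word operator factors as $\Pi_{\bm{\alpha}} = P_{\alpha_{n-1}}(n-1) \cdots P_{\alpha_1}(1) P_{\alpha_0}(0)$ where $P_{\alpha}(j) = U^{-j} P_{\alpha} U^{j}$ denotes the Heisenberg propagate. The crucial input I would extract from Proposition \ref{prop:pou_quasi} is that $\sum_{\alpha \in \Sigma^1} P_{\alpha}^{*} P_{\alpha}$ acts as the identity on states microlocalized at $\mathcal{E}$, modulo a pseudodifferential remainder $R$ whose principal symbol vanishes on a neighbourhood of $\mathcal{E}$.

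The first step is to expand $\sum_{\bm{\alpha}} \|\Pi_{\bm{\alpha}}\Psi_{\h}\|^2 = \langle \sum_{\bm{\alpha}} \Pi_{\bm{\alpha}}^{*} \Pi_{\bm{\alpha}} \Psi_{\h}, \Psi_{\h}\rangle$ and telescope, summing first over $\alpha_{n-1}$, then $\alpha_{n-2}$, and so on. At each step I would invoke
\begin{align*}
\sum_{\alpha_j} P_{\alpha_j}(j)^{*} P_{\alpha_j}(j) = \mathrm{Id} + R_j, \qquad R_j := U^{-j} R U^{j},
\end{align*}
and the telescoping collapses to the operator identity
\begin{align*}
\sum_{\bm{\alpha} \in \Sigma^n} \Pi_{\bm{\alpha}}^{*} \Pi_{\bm{\alpha}} \;=\; \mathrm{Id} \;+\; \sum_{j=0}^{n-1} \sum_{\alpha_0, \ldots, \alpha_{j-1}} \Pi_{\alpha_0, \ldots, \alpha_{j-1}}^{*} \, R_j \, \Pi_{\alpha_0, \ldots, \alpha_{j-1}}.
\end{align*}
Sandwiching with $\Psi_{\h}$ produces $1$ plus a sum of error terms that must be controlled uniformly in the prefix length.

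The crux, and the main obstacle, is ensuring each error term is $\mathcal{O}(\h^{\infty})$ throughout $j \leq n \leq \lfloor 2 T_{\delta_{Ehr},\epsilon_{slab},\h}\rfloor$ Heisenberg conjugations. This is precisely where the Ehrenfest-time bound enters: long-time (anisotropic) Egorov keeps both $R_j$ and the prefix operators $\Pi_{\alpha_0, \ldots, \alpha_{j-1}}$ bona fide semiclassical pseudodifferential operators with symbols in a controlled class, and since $\mathcal{E}$ is $g^{t}$-invariant the principal symbol of $R_j$ still vanishes on a fixed neighbourhood of $\mathcal{E}$. Applying such a composite operator to $\Psi_{\h}$ (whose wavefront is in $\mathcal{E}$ by Lemma \ref{lem:strong_micro}) then yields $\mathcal{O}(\h^{\infty})$. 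The combinatorial blow-up $K^{j} \leq K^{n} = \h^{-\mathcal{O}(\log K)}$ from summing over prefixes is absorbed by $\mathcal{O}_{K}(\h^{\infty})$, which accounts for the $K$-dependence in the statement.

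The second identity follows by running the same mechanism only at the final slot:
\begin{align*}
\sum_{\alpha_{n-1}} \|\Pi_{\alpha_0, \ldots, \alpha_{n-1}} \Psi_{\h}\|^2 = \bigl\langle (\mathrm{Id} + R_{n-1}) \, \Pi_{\alpha_0, \ldots, \alpha_{n-2}} \Psi_{\h}, \, \Pi_{\alpha_0, \ldots, \alpha_{n-2}} \Psi_{\h}\bigr\rangle,
\end{align*}
and observing that $\Pi_{\alpha_0, \ldots, \alpha_{n-2}} \Psi_{\h}$ preserves microlocalization near $\mathcal{E}$ up to $\mathcal{O}(\h^{\infty})$ (again by long-time Egorov combined with Lemma \ref{lem:strong_micro}), so that the $R_{n-1}$-contribution is negligible and the telescoping identity follows.
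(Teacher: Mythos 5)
Your argument is correct and uses the same ingredients as the paper, but there is a small structural difference worth noting. The paper's proof simply applies Proposition \ref{prop:pou_quasi} as a black box: that proposition already asserts $\sum_{\bm{\alpha}\in\Sigma^n}\Pi_{\bm{\alpha}}^{*}\Pi_{\bm{\alpha}} = S_n$ with $(Id - S_n)\Op_{\h}(\chi) = \mathcal{O}_K(\h^\infty)$ for \emph{all} $n$ up to $\lfloor C|\log\h|\rfloor$, so one only needs to combine it with $\Op_{\h}(\chi)\Psi_{\h} = \Psi_{\h} + \mathcal{O}(\h^\infty)$ from Lemma \ref{lem:strong_micro} to get $\langle S_n\Psi_{\h},\Psi_{\h}\rangle = 1 + \mathcal{O}_K(\h^\infty)$, and with \cite[Remark 2.4]{AN07} (propagation of microlocalization) for the second identity. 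You, in contrast, extract only the $n=1$ quantum partition of unity from Proposition \ref{prop:pou_quasi} and then re-derive the $n$-step identity by telescoping, controlling the remainders $\Pi_{\alpha_0\cdots\alpha_{j-1}}^{*}R_j\Pi_{\alpha_0\cdots\alpha_{j-1}}$ via long-time anisotropic Egorov. That telescoping identity is correct, and the flow invariance of $\mathcal{E}$ indeed keeps the symbol of $R_j$ vanishing on a fixed neighbourhood of $\mathcal{E}$, so the argument goes through. But you are essentially re-proving the $n$-dependent part of Proposition \ref{prop:pou_quasi} (the content of \cite[Proposition 3.1]{Non12}) rather than using it, and the technical cost of keeping the conjugated operators in a useful exotic class $S^{-\infty,0}_\nu$ up to twice the Ehrenfest time is precisely what that proposition already absorbs. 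The paper's route is therefore shorter; your route is more self-contained in spirit but duplicates work that is available off the shelf. Both yield the same estimate, with the $K^n \lesssim \h^{-C}$ combinatorial loss harmlessly swallowed by $\mathcal{O}_K(\h^\infty)$ exactly as you note.
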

\begin{proof}
This is an immediate implication of our previous results.  Thanks to Lemma \ref{lem:strong_micro}, for any $\chi \in S^{-\infty, 0}(T^*M)$ supported on $\mathcal{E}_{\epsilon_{slab }/2}$, $\Op_{\h}(\chi) \Psi_{\h} = \Psi_{\h} + \mathcal{O}(\h^{\infty})$.  Proposition \ref{p:strong_microlocal_quasi} shows that $\Pi_{\bm{\alpha}} \Psi_{\h}$ continues to be microlocalized near $\mathcal{E}$ at scale $\hbar^{\gamma}$ for some $\gamma>0$.  Now apply Proposition \ref{prop:pou_quasi} with the same choice of $\chi$.
\end{proof}

\subsection{Shift invariance}

For $\bm{\beta} \in \Sigma^{n_0}$, define $\Pi_{\bm{\beta}}(n) := U^{-n} \Pi_{\bm{\beta}} U^n$ and $\Pi_{\bm{\beta}}(-n) := U^{n} \Pi_{\bm{\beta}} U^{-n}$ which represent a ``forward shift" and ``backwards shift" of the quantum partition element.  Similarly to \cite[Proposition 4.1]{AN07} and \cite[Lemma 3.14]{Non12}, we have a more general statement for the invariance under the shift-type operation:

\begin{prop}[Approximate shift-invariance for logarithmic modes] \label{l:approx_shift} 
Consider an $\epsilon$-logarithmic family $\{\Psi_{\h}\}_{\h}$.  Fix some $n_0 \geq 0$, $\delta_{Ehr}>0$, and $\kappa_{slab}>0$.   Then, there exists $\h_0(n_0, \epsilon, \delta_{Ehr}, \kappa_{slab}, K)$ such that for all $\h \leq \h_0$, any $\bm{\beta} \in \Sigma^{n_0}$, and  $n$ in the range $[0, 2 T_{\delta_{Ehr}, \kappa_{slab}, \h} - n_0]$ we have
\begin{align} \label{e:shift_formula}
\nonumber & \sum_{\bm{\alpha} \in \Sigma^n} \left\|  \Pi_{\bm{\beta}}(n) \left[ \Pi_{\alpha_{n-1}}(n-1) \dots \Pi_{\alpha_{0}}(0) \right] \Psi_{\h} \right\|^2 = \langle |\Pi_{\bm{\beta}}|^2 U^n \Psi_{\h}, U^n \Psi_{\h} \rangle + G(\h, \bm{\beta},\epsilon, n) \\
& = \mu_{\h} \left(  \bm{\beta}  \right) +  F_1(\h, \bm{\beta}, \epsilon, n) + G(\h, \bm{\beta}, \epsilon, n),
\end{align}
where 
\begin{align} \label{eqn:F_1}
\nonumber  F_1(\h, \bm{\beta}, \epsilon, n) & := \langle |\Pi_{\bm{\beta}}|^2 U^n \Psi_{\h}, U^n \Psi_{\h} \rangle - \mu_{\h} \left( \bm{\beta}  \right) \\
& = \langle |\Pi_{\bm{\beta}}|^2 \Psi_{\h}, \Phi_{\h}[n]  \rangle  + \langle |\Pi_{\bm{\beta}}|^2 \Phi_{\h}[n], \Psi_{\h}  \rangle + \langle |\Pi_{\bm{\beta}}|^2 \Phi_{\h}[n], \Phi_{\h}[n]  \rangle,
\end{align}
\begin{equation*}
\Phi_{\h}[n](x) := \frac{-i}{\h} \int_0^n \left(P(\h) - \frac{1}{2} \right) U^{s} e^{is/2\h}\Psi_{\h} \, ds,
\end{equation*}
and 
\begin{align*} 
 G(\h, \bm{\beta}, \epsilon, n) 
 = \mathcal{O}_{K}(n \cdot \h^{\delta_{Ehr}}) . 
\end{align*}
Moreover, for $N>0$ given,
\begin{align*}
\sum_{\bm{\beta} \in \Sigma^{n_0}} F_1(\h, \bm{\beta}, \epsilon, n)  = \mathcal{O}_K(\h^{N})
\end{align*}
in the given parameter range with $\h$ possibly smaller.  Finally, 
\begin{align} \label{eqn:F_1_ub}
|F_1(\h, \bm{\beta}, \epsilon, n)| \leq 2 \| |\Pi_{\bm{\beta}}|^2 \Psi_{\h}\| \cdot \frac{\epsilon n}{|\log \h|} +  \left( \frac{\epsilon n}{|\log \h|}  \right)^2.
\end{align}
A similar formula, with analogous errors in $\h$, holds for the ``backwards" quantities:
\begin{align*}
\sum_{\bm{\alpha} \in \Sigma^n} \left\| \Pi_{\bm{\beta}}(-n)  \left[ \Pi_{\alpha_{-n}}(-n) \dots \Pi_{\alpha_{-1}}(-1) \right] \Psi_{\h} \right\|^2 = \langle |\Pi_{\bm{\beta}}|^2 U^{-n} \Psi_{\h}, U^{-n} \Psi_{\h} \rangle + \tilde{G}
\end{align*}
with the formula equating this to $\tilde{\mu}_{\h}(\bm{\beta})$ having the defect $\tilde{F}_1(\bullet)$ and errors of the same magnitude in $\h$.  
\end{prop}

Our proposition should be compared with the shift-invariance statement \cite[Proposition 1.3.1]{An08} and \cite[Lemma 3.14]{Non12}.  


\begin{proof}

We proceed almost exactly the same as in \cite[Lemma 3.14]{Non12} until the last few steps.   We do the calculation for the forward measure as that for the backwards measure is nearly identical. We have
\begin{align}
\nonumber & \sum_{\bm{\alpha} \in \Sigma^n} \langle |\Pi_{\bm{\beta}}(n)|^2 \Pi_{\bm{\alpha}} \Psi_{\h}, \Pi_{\bm{\alpha}} \Psi_{\h}  \rangle  = \sum_{ \bm{\alpha'} \in \Sigma^{n-1}}   \sum_{\alpha_{n-1} \in \Sigma}  \left(  \langle \Pi_{\alpha_{n-1}}(n-1)^* |\Pi_{\bm{\beta}}(n)|^2 \Pi_{\alpha_{n-1}}(n-1) \Pi_{\bm{\alpha}'} \Psi_{\h}, \Pi_{\bm{\alpha}'} \Psi_{\h}  \rangle  \right) \\
\nonumber & = \sum_{\bm{\alpha}' \in \Sigma^{n-1}} \sum_{\alpha_{n-1} \in \Sigma} \langle \Pi_{\alpha_{n-1}}(n-1)^* \Pi_{\alpha_{n-1}}(n-1) |\Pi_{\bm{\beta}}(n)|^2 \Pi_{\bm{\alpha}'} \Psi_{\h}, \Pi_{\bm{\alpha}'} \Psi_{\h}  \rangle  \\
\label{eqn:shift_calc_1} & +  \sum_{\bm{\alpha}' \in \Sigma^{n-1}} \sum_{\alpha_{n-1} \in \Sigma} \langle \Pi_{\alpha_{n-1}}(n-1)^* \left[ |\Pi_{\bm{\beta}}(n)|^2, \Pi_{\alpha_{n-1}}(n-1)  \right]\Pi_{\bm{\alpha}'} \Psi_{\h}, \Pi_{\bm{\alpha}'} \Psi_{\h}  \rangle  \\
\nonumber & = \left( \sum_{\bm{\alpha}' \in \Sigma^{n-1}} \langle |\Pi_{\bm{\beta}}(n)|^2 \Pi_{\bm{\alpha}'} \Psi_{\h}, \Pi_{\bm{\alpha}'} \Psi_{\h}  \rangle \right) \\
\label{eqn:shift_calc_2} & + \left( \mathcal{O}(\h^{\delta_{Ehr}}) \cdot \sum_{\alpha_{n-1} \in \Sigma^1} \| \Pi_{\alpha_{n-1}}(n-1) \| \right) + K^{n-1} \cdot \mathcal{O} \left( \h^{N} \right) + K  \cdot \mathcal{O}(\h^{N})
\end{align}
The error terms in the final line are deduced as follows in the next two paragraphs. 

We start from the first term in equation (\ref{eqn:shift_calc_1}). We use that $\sum_{\alpha_{n-1}}  \Pi_{\alpha_{n-1}}(n-1) \Pi_{\alpha_{n-1}}(n-1)^*  = \tilde{S}_n+ R_{\h}$, thanks to the proof of Proposition \ref{prop:pou_quasi} found in \cite[Proposition 3.1]{Non12} and $\tilde{S}_n$ satisfying the same properties as $S_n$. Apply the microlocalization statement Lemma \ref{lem:strong_micro}. 
 Proposition \ref{p:strong_microlocal_quasi} shows $\Pi_{\bm{\beta}}(n) \Pi_{\bm{\alpha}'} \Psi_{\h}$ continues to be microlocalized near $\mathcal{E}$ at scale $\hbar^{\gamma}$.  In (\ref{eqn:shift_calc_2}), this explains the first and third terms of $\sum_{\bm{\alpha}' \in \Sigma^{n-1}} \langle |\Pi_{\bm{\beta}}(n)|^2 \Pi_{\bm{\alpha}'} \Psi_{\h}, \Pi_{\bm{\alpha}'} \Psi_{\h}  \rangle$ and $K^{n-1} \cdot \mathcal{O}(\h^{N})$, respectively.

We now work from the second term in (\ref{eqn:shift_calc_1}) to the second and fourth terms in (\ref{eqn:shift_calc_2}).  The assumption that $n + n_0 \leq \lfloor 2 T_{\delta_{Ehr}, \kappa_{slab}, \h} \rfloor$ is crucial in estimating our commutator.  Note that 
\begin{align*}
& \left[ |\Pi_{\bm{\beta}}(n)|^2, \Pi_{\alpha_{n-1}}(n-1)  \right] \\
& = U^{-n/2} \left( |\Pi_{\bm{\beta}}|^2(\frac{n}{2}) U^{n/2} U^{-n/2} \Pi_{\alpha_{n-1}}(\frac{n}{2}-1) - \Pi_{\alpha_{n-1}}(\frac{n}{2}-1) U^{n/2} U^{-n/2}|\Pi_{\bm{\beta}}|^2(\frac{n}{2}) \right) U^{n/2}.
\end{align*} The estimate on this commutator will follow from an application of the ``long-time" form of Egorov's Theorem \cite[Proposition 5.1]{Non12} on the operator 
\begin{align*}
|\Pi_{\bm{\beta}}|^2(\frac{n}{2}) \Pi_{\alpha_{n-1}}(\frac{n}{2}-1) - \Pi_{\alpha_{n-1}}(\frac{n}{2}-1)|\Pi_{\bm{\beta}}|^2(\frac{n}{2}),
\end{align*}  
and the invariance of norms under conjugation by $U^{-n/2}$.  We perform the following steps: 
sum first in $\alpha_{n-1}$ in order to bound above by 
\begin{equation*}
    \sum_{\bm{\alpha}'} \sum_{\alpha_{n-1}} \| \Pi_{\alpha_{n-1}} \| \cdot   \| \left[ |\Pi_{\bm{\beta}}(n)|^2, \Pi_{\alpha_{n-1}}(n-1)  \right] \|  \cdot \|\Pi_{\bm{\alpha}'} \Psi_{\h} \|^2 
\end{equation*}
Now, use $\| \left[ |\Pi_{\bm{\beta}}(n)|^2, \Pi_{\alpha_{n-1}}(n-1)  \right] \| = \mathcal{O}(\h^{\delta_{Ehr}})$ and apply Lemma \ref{lem:compatible_meas} in $\bm{\alpha}'$. This gives the second and fourth terms in (\ref{eqn:shift_calc_2}).

We can repeat this procedure $n-1$ more times to obtain that $\sum_{\bm{\alpha} \in \Sigma^n} \langle |\Pi_{\bm{\beta}}(n)|^2 \Pi_{\bm{\alpha}} \Psi_{\h}, \Pi_{\bm{\alpha}} \Psi_{\h}  \rangle $ equals
\begin{align*}
& \langle |\Pi_{\bm{\beta}}|^2 U^{n} \Psi_{\h}, U^{n} \Psi_{\h} \rangle \\
& + \underbrace{n \cdot \left( \mathcal{O}(\h^{\delta_{Ehr}}) \cdot \sum_{\alpha \in \Sigma^1} \| \Pi_{\alpha} \| \right)+  \left( \sum_{j=1}^{n-1} K^j \right)  \cdot \mathcal{O} \left( \h^{N} \right) +  n \cdot \mathcal{O}_{K} \left( \h^{N} \right)}_{=: G(\h, \mathbf{\beta}, \epsilon, n)}.
\end{align*}
Note that 
\begin{align} \label{eqn:measure_compare}
\| \Pi_{\bm{\beta}} U^n \Psi_{\h} \|^2 + G(\h, \bm{\beta}, n)= \| \Pi_{\bm{\beta}}\Psi_{\h} \|^2+  F_1(\h, \bm{\beta}, \epsilon, n) + G(\h, \bm{\beta}, \epsilon, n).
\end{align}
The equation $\sum_{\bm{\beta}} F_1(\h, \bm{\beta}, \epsilon, n) = \mathcal{O}(\h^{\infty})$, as $\h \rightarrow 0$, now follows immediately from Proposition \ref{prop:pou_quasi} applied to both sides of (\ref{eqn:measure_compare}), Lemma \ref{lem:strong_micro} applied to $U^n \Psi_{\h}$ as $U^n$ is unitary, and the fact that our states are normalized.

Lastly, to give a bound on $F_1$, we use the simple operator formula of 
\begin{align*}
U^n - e^{-in/2\h} Id = \frac{-i}{\h} \int_0^n \left(P(\h) - \frac{1}{2} \right) U^{s} e^{is/2\h} e^{-in/2\h} ds
\end{align*}
to arrive at equation (\ref{eqn:F_1}).  We remind ourselves that $\sum_{\bm{\beta}} \langle |\Pi_{\bm{\beta}}|^2 \Phi_{\h}, \Phi_{\h}  \rangle = \langle \Phi_{\h}, \Phi_{\h}  \rangle \leq \left( \frac{ \epsilon n}{|\log \h|} \right)^2 + \mathcal{O}(\h^{\infty})$ with each summand itself being real and positive, therefore giving us (\ref{eqn:F_1_ub}).
\end{proof}

\subsection{A remark on the continuity of $F_1$} \label{subsect:assumption_A}

Consider Proposition \ref{l:approx_shift}.  Suppose that given $\overline{\epsilon}>0$, there exists a possibly smaller $\h_0$ and a $\overline{\delta}(\overline{\epsilon})>0$ such that for all quasiprojectors $\Pi_{\bm{\beta}}$ where $\mu_{\h}(\bm{\beta}) < \overline{\delta}$ when $\h \leq \h_0$ and $\bm{\beta} \in \Sigma^{n_0}$, we have that 
\begin{equation*}
\sup_{n \leq  \lfloor 2T_{\delta_{Ehr}, \kappa_{slab}, \h} \rfloor - n_0 } |F_1(\h, \bm{\beta}, \epsilon, n)| < \overline{\epsilon}.
\end{equation*}

This statement is trivial when $\epsilon = o(1)$ in $\h$.  Moreover, it is simply the limiting definition of absolute continuity of measures.  Such a statement appears to require a knowledge of the asymptotics of $\mu_{\h}$ away from the support of $\mu_{sc}$, but contained in $\mathcal{E}$, which the author was unable to prove. A previous version of this manuscript assumed this continuity-type condition on $F_1$.  The author was later informed by St\'ephane Nonnenmacher that the assumption fails for the toy model of the quantum cat map, making a stronger case to state our Theorem \ref{thm:main} without additional assumptions.

\section{Quantum entropies and the entropic uncertainty principle} \label{sect:quant_entropies_EUP}

\subsection{Definitions: Quantum entropies and pressures}
As mentioned at the top of Section \ref{sect:quantum_measures}, we suggest the readers who are not fluent with the material found in \cite{AN07, AKN06} to read Appendix \ref{sect:entropy_quasiproj} and familiarize themselves with the notions of classical metric entropy of a measure with respect to a partition $H_0^{n-1}(\bullet)$, entropy of a measure with respect to a smooth partition $H_0^{n-1}(\bullet)$ (a convenient abuse of notation), and classical/quantum pressure functionals $p^{n-1}_0(\bullet)$.  We give a brief introduction to the quantum versions of entropy and pressure now:
\begin{defn}
Given an $L^2$-normalized quantum state $\Psi_{\h}$, a smooth quantum partition $\mathcal{P}_{\rm{sm}, \rm{q}}$, and an associated set of positive weights (see $\bm{w}$ (\ref{eqn:pressure_weights})), we define the \textit{quantum entropy} as
\begin{equation*}
H_0^{n-1}(\Psi_{\h}, \mathcal{P}_{\rm{sm, q}} ) :=  \sum_{\bm{\alpha} \in \Sigma^n} \eta \left( \|\Pi_{\bm{\alpha}} \Psi_{\h} \|^2 \right) 
\end{equation*}
and the \textit{quantum pressure} as
\begin{equation*}
p_0^{n-1}(\Psi_{\h}, \mathcal{P}_{\rm{sm,q}}, \bm{w}) :=  \sum_{\bm{\alpha} \in \Sigma^n} \eta \left( \|\Pi_{\bm{\alpha}} \Psi_{\h} \|^2 \right) -  \sum_{\bm{\alpha} \in \Sigma^n} \| \Pi_{\bm{\alpha}} \Psi_{\h} \|^2 \log \left( \bm{w}_{\bm{\alpha}}^2 \right). 
\end{equation*} 
\end{defn}
We suppress the mention of $\mathcal{P}_{\rm{sm,q}}$ in our notation of quantum entropies/pressures until the end of our argument when both the smoothing parameter $\eta$ goes to 0 (see Definition \ref{def:smooth_partition}) and $K$ goes to $\infty$.

\subsection{Entropic uncertainty principle} \label{sect:EUP}
The following proposition is a recording of \cite[Proposition 3.12]{Non12} and plays a major role in the proof of Theorem \ref{thm:main}. We keep the more abstract form of the statement for a cleaner exposition.  Towards the end of this section is it explained why we chose to work with spectrally localized modes rather than more general ones of logarithmic width.  

\begin{prop}[Entropic uncertainty principle for microlocal weighted partitions] \label{prop:EUP}
On the Hilbert space $\mathcal{H} = L^2(M, dV_g)$, consider two approximate quantum partitions (of unity), that is two finite sets of bounded operators $\bm{\rho} = \{ \rho_i, \, i \in I \}$ and $\bm{\tau} = \{ \tau_j, \, j \in J \}$ satisfying the identities
\begin{equation*}
\sum_{i \in I} \rho_i^* \rho_i = S_{\bm{\rho}}, \, \, \, \sum_{j \in J} \tau_j^* \tau_j = S_{\bm{\tau}},
\end{equation*}
and two families of weights $\bm{v} = \{v_i, i \in I \}$, $\bm{w} = \{ w_j, j \in J \}$ satisfying $V^{-1} \leq v_i, w_j \leq V$ for some $V \geq 1$.

Assume that for some $0 \leq \mathscr{R} \leq \min \{|I|^{-2} V^{-2}, |J|^{-2} V^{-2} \}$, the above sum of operators satisfies $0 \leq \|S_{\bm{\rho}/\bm{\tau}} \|_{op} \leq 1 + \mathscr{R}$. Furthermore, let $S_{c_1}, S_{c_2}$ be two Hermitian operators on $\mathcal{H}$ satisfying $0 \leq \|S_{c_1/c_2} \|_{op} \leq 1 + \mathscr{R}$ and the relations
\begin{align*}
& \| \left( S_{c_2} - \rm{Id}) \rho_i \, S_{c_1} \right) \|_{op} \leq \mathscr{R}, \, \mbox{ for } i \in I \\
& \| \left( S_{\bm{\rho}/\bm{\tau}} - \rm{Id}) \, S_{c_1} \right) \|_{op} \leq \mathscr{R}, \mbox{ and }
\end{align*}
Set 
\begin{equation*}
c_{\rm{cone}} := \max_{i \in I, j \in J} v_i w_j \|\tau_j \rho_i^*  S_{c_2} \|_{op}.
\end{equation*}

For any $\Psi \in \mathcal{H}$ normalized such that $\| (\rm{Id} - S_{c_1} ) \Psi \|_{op} \leq \mathscr{R}$, the quantum pressures with respect to the weighted partitions $(\bm{\rho}, \bm{v})$ and $(\bm{\tau}, \bm{w})$ satisfy the bound
\begin{equation*}
p(\Psi, \bm{\rho}, \bm{v}) + p(\Psi, \bm{\tau}, \bm{w}) \geq -2 \log \left( c_{\rm{cone}} + 3 |I| V^2 \mathscr{R} \right) + \mathcal{O}\left( \mathscr{R}^{1/5} \right)
\end{equation*}
with the implicit constant in the big-O notation being independent of the weighted partitions or cutoff operators $S_{c_1/c_2}$.
\end{prop}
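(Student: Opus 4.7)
The plan is to reduce this weighted microlocal statement to a Maassen--Uffink-type entropic uncertainty principle. The first step is to build honest $\ell^2$ objects from the approximate quantum partitions. For any normalized $\Psi$ with $\|({\rm Id} - S_{c_1})\Psi\| \leq \mathscr{R}$, I would use the hypotheses $\|(S_{\bm{\rho}} - {\rm Id})\, S_{c_1}\|_{op} \leq \mathscr{R}$ and $\|(S_{\bm{\tau}} - {\rm Id})\, S_{c_1}\|_{op} \leq \mathscr{R}$ to show that the vectors $A = (v_i \|\rho_i \Psi\|)_{i \in I}$ and $B = (w_j \|\tau_j \Psi\|)_{j \in J}$ each have $\ell^2$-norm equal to $1 + O(V^2\mathscr{R})$ after inserting the weights. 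These are precisely the quantities whose Shannon-type entropies, modulated by $\log v_i$ and $\log w_j$, appear in the pressures $p(\Psi, \bm{\rho}, \bm{v})$ and $p(\Psi, \bm{\tau}, \bm{w})$.

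Next, I would introduce a transition operator $T: \ell^2(I) \to \ell^2(J)$ whose matrix entries are morally $v_i w_j \langle \tau_j \rho_i^* S_{c_2} u, u'\rangle$ for suitable unit vectors, and control it by Riesz--Thorin interpolation between two endpoint estimates: the crude $\ell^1 \to \ell^\infty$ bound, which is exactly $c_{\rm{cone}}$, and the $\ell^2 \to \ell^2$ bound, which is close to $1$ thanks to the approximate-isometry properties of both partitions combined with $\|S_{c_2}\|_{op} \leq 1 + \mathscr{R}$. Interpolating at $p = 2$ gives the mixed-norm bound $\|T\|_{\ell^2 \to \ell^2}^{\rm interp}$ that drives the Maassen--Uffink argument.

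The third step is to apply this interpolation inequality in the form $B \approx T A$, where the approximation uses $\tau_j \Psi \approx \tau_j S_{c_1}\Psi$ (from the cutoff hypothesis) and then inserts $S_{c_2}$ via $\|(S_{c_2} - {\rm Id})\rho_i S_{c_1}\|_{op} \leq \mathscr{R}$. From the interpolation bound $\|B\|_{\ell^{p'}} \leq \|T\|_{\ell^p \to \ell^{p'}} \|A\|_{\ell^p}$, differentiating in $p$ at $p=2$ (the standard Maassen--Uffink trick) converts the $L^p$ inequality into the sum-of-entropies inequality and extracts the principal term $-2 \log c_{\rm{cone}}$. The additive $3 |I| V^2 \mathscr{R}$ inside the logarithm comes from the fact that the effective transition kernel one actually applies is not $T$ but $T$ plus an $\mathscr{R}$-sized perturbation with at most $|I|$ columns of norm $V^2$, which contributes linearly under the $\ell^\infty$ (operator) norm.

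Finally I would collect the error terms. The perturbations come from replacing $\Psi$ by $S_{c_1}\Psi$, replacing $\rho_i S_{c_1}$ by $S_{c_2}\rho_i S_{c_1}$, and the deviations of $S_{\bm{\rho}}, S_{\bm{\tau}}$ from the identity on the range of $S_{c_1}$; each is of size $\mathscr{R}$ up to combinatorial factors already absorbed in the main log term. What remains is a bulk error of the form $O(\mathscr{R}^{1/5})$ arising from propagating these perturbations through H\"older-type manipulations in the interpolation step. I expect this last bookkeeping to be the main technical hurdle: the exponent $1/5$ is not dictated by any single sharp inequality but by optimizing the trade-off between several perturbative estimates simultaneously (balancing the $\ell^2$ defect of $A$ and $B$ against the operator-norm defects of $S_{\bm{\rho}}, S_{\bm{\tau}}$ and against the $\ell^\infty$-type bound $c_{\rm{cone}}$), and it is where the proof requires real care rather than a transparent application of the classical Maassen--Uffink argument.
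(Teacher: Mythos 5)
The paper does not actually prove this proposition: it is imported verbatim from Nonnenmacher's lecture notes \cite[Proposition 3.12]{Non12}, whose proof (itself following \cite[Theorem 6.5]{AN07}) is the Riesz--Thorin/Maassen--Uffink argument you outline, with error terms tracked through the interpolation. So your overall strategy is the correct one. Two points deserve scrutiny, however. First, the claim that $A = (v_i\|\rho_i\Psi\|)_{i}$ and $B = (w_j\|\tau_j\Psi\|)_{j}$ have $\ell^2$-norm $1 + O(V^2\mathscr{R})$ is not correct as stated: the unweighted sums $\sum_i\|\rho_i\Psi\|^2$ and $\sum_j\|\tau_j\Psi\|^2$ are close to $1$ (via $S_{\bm\rho}$, $S_{\bm\tau}$ being near the identity on the range of $S_{c_1}$), but inserting weights $v_i, w_j\in[V^{-1},V]$ can stretch those $\ell^2$-norms anywhere in $[V^{-1},V]$. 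For the same reason, if you absorb the weights into your transition operator $T$ from the start, the $\ell^2\to\ell^2$ endpoint of the interpolation is of size $V^2$, not $1+O(\mathscr{R})$, and the Riesz--Thorin bound degenerates. The fix in the actual proof is to treat the weights as part of the interpolated data: one applies the three-lines lemma to an analytic family $T(z)$ in which the weight exponents themselves vary holomorphically with $z$ (this is the Stein--Weiss variant of Riesz--Thorin), so that the $\ell^2$ endpoint sees trivial weights and the $\ell^1\to\ell^\infty$ endpoint produces $c_{\rm cone}$; the pressure correction $-2\sum_i\|\rho_i\Psi\|^2\log v_i - 2\sum_j\|\tau_j\Psi\|^2\log w_j$ then falls out of the derivative at the interior point. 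Second, you are right that the exponent $1/5$ in the $\mathcal{O}(\mathscr{R}^{1/5})$ error is not dictated by any single sharp inequality; in \cite{AN07,Non12} it emerges from a specific (non-optimized) choice of the perturbation parameter balancing the $\ell^2$-normalization defect against the operator-norm defects of $S_{\bm\rho}$, $S_{\bm\tau}$, $S_{c_1}$, $S_{c_2}$. Aside from the normalization issue above, your reconstruction matches the known proof.
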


The formula for $c_{\rm{cone}}$ determines our choice of quantum partitions and weights; in particular, we need that $\tau_j \rho_i ^*S_{c_2}$ be equal to $\Pi_{\bm{\alpha}} U^n \Pi_{\bm{\beta}} U^{-n} \chi^{(n)}(P(\h)-\frac{1}{2})$ (see Proposition \ref{prop:hyp_disp}) where $|\bm{\alpha}| = |\bm{\beta}| = n$ and $n = \lfloor 2 T_{\delta_{Ehr}, \kappa_{slab}, \h} \rfloor$.  The following list sets the remaining choices in place:
\begin{align}\label{eqn:EUP_quantities}
\begin{cases}
& \bm{\tau} = \{ \Pi_{\bm{\alpha}} \, | \, |\bm{\alpha}|=n \}  \\
& \bm{\rho} = \{ U^{n} \Pi_{\bm{\beta}}^* U^{-n} \, | \,  |\bm{\beta}|=n \} \\
& \bm{v} = \bm{w} = \{ w_{\bm{\beta}}, v_{\bm{\beta}} := \sqrt{J_n^u(\bm{\beta})} \, | \, |\bm{\beta}|=n \} \\
& S_{c_1} = \chi^{(0)}(P(\h)-\frac{1}{2}), \, S_{c_2} = \chi^{(n)}(P(\h)-\frac{1}{2}) 
\end{cases}
\end{align}
The formulas for the coarse-grained Jacobians in  (\ref{eqn:coarse_grain_Jac}) show $\sqrt{J_{2n}^u(\bm{\alpha}\bm{\beta})} = \sqrt{J_n^u(\bm{\beta})} \sqrt{J_n^u(\bm{\alpha})}$. Moreover, $|I|=|J|=K^n$ and there exists a uniform $\tilde{C}>0$ such that 
\begin{equation} \label{eqn:Jac_bounds}
\tilde{C}^{-1} e^{n \Lambda^u_{\rm{min}}(d-1)/2} \leq \sqrt{J_n^u(\bm{\beta})} \leq \tilde{C} e^{n \lambda_{\rm{max}}(d-1)/2}
\end{equation}
where $\Lambda^u_{\rm{min}}$ is the minimal unstable expansion rate on $\mathcal{E}$ \cite[Page 18]{Non12}, \cite[Equation (2.17)]{AN07}.  Thanks to Proposition \ref{prop:hyp_disp}, 
\begin{equation*}
c_{\rm{cone}} \leq C h^{-(d-1 + c_0 \delta_{energy})/2}
\end{equation*}
for all $\h$ sufficiently small.  For our $\epsilon$-logarithmic modes $\Psi = \Psi_{\h}$, we can set $\mathscr{R}= \h^N$ for $N$ sufficiently large thanks to our microlocalization property Lemma \ref{lem:strong_micro} and Proposition \ref{p:strong_microlocal_quasi}.  Therefore, for $\h \leq \h_0 \left( \mathcal{P}_{\rm{sm}}, \delta_{Ehr}, \kappa_{slab}, N \right)$ possibly smaller than before, we have established a bound of
\begin{equation*}
p_0^{n-1}(\Psi_{\h}, \mathcal{P}_{\rm{sm,q}}, \bm{v}) + p_{-n}^{-1}(\Psi_{\h}, \mathcal{P}_{\rm{sm,q}}^*, \bm{w}) \geq -(d-1 + c_0 \delta_{Ehr})|\log \h| - 2 \log 10 C + \mathcal{O}(\h^{N/4})
\end{equation*}
where $\mathcal{P}_{\rm{sm}}^*$ is the set of adjoints of operators in $\mathcal{P}_{\rm{sm}}$ and the implicit constant is independent of $n$.

\begin{rem}
Here, we see the convenience of spectrally localized modes: to have $\mathscr{R}$ small enough to dominate the growth of $K^n \cdot V^2=\mathcal{O}(\h^{-(d-1)-\frac{\log K}{2 \lambda_{max}}})$, as dictated by the choice of weights $w_{\bullet}, v_{\bullet}$, and make the corresponding product smaller than $c_{\rm{cone}}$.  Were $\{\Psi_{\h}\}_{\h}$ a more general logarithmic family, then $\mathscr{R} = \mathcal{O}\left(\frac{\h}{|\log \h|}\right)$ would be our best current estimate, which is insufficient.
\end{rem}

\section{Sub-additivity statements} \label{sect:subadd}

Following the strategy of Anantharaman-Nonnenmacher \cite[Section 4]{AN07} to establish the sub-additivity of quantum entropies, we aim to keep a better track of certain quantities in their estimates which arise from our $\epsilon$-logarithmic modes. 

\subsection{Sub-additivity of quantum entropies}

\begin{lem}[Approximate sub-additivity of quantum entropies] \label{lem:subadd_entropy}
Fix some $n_0 \geq 0$, $\delta_{Ehr} \in (0,1)$, and $\kappa_{slab}$. Let $\gamma>0$.  

Given an $\epsilon$-logarithmic family $\{\Psi_{\h}\}_{\h}$, there exists $\h_0(\epsilon, K, n_0, \delta_{Ehr}, \kappa_{slab},  \gamma)$ such that for all $\h \leq \h_0$, any $\bm{\beta} \in \Sigma^{n_0}$, and  integers $n$ in the range $[0, 2 T_{\delta_{Ehr}, \kappa_{slab}, \h} - n_0]$ we have the estimate 
\begin{align*}
H_0^{n + n_0-1}(\Psi_{\h}) \leq   H_0^{n-1}(\Psi_{\h}) + H_{0}^{ n_0-1}(U^{n}\Psi_{\h}) +  R_1\left(\h, n, n_0, \epsilon \right)  + \mathcal{O}_{K, n_0}(\h^{\gamma})
\end{align*} 
where
\begin{align} \label{eqn:sub_add_error}
\nonumber R_1(\h, n, n_0, \epsilon) {}:={} & - \sum_{|\bm{\beta}| = n_0} \left( \| \Pi_{\bm{\beta}}U^{n} \Psi_{\h} \|^2 + G(\h, \bf{\beta}, \epsilon, n) \right) \log \left( \| \Pi_{\bm{\beta}} U^n \Psi_{\h} \|^2 + G(\h, \bm{\beta}, \epsilon, n) ) \right)  \\
\nonumber &  - H^{n_0-1}_0 (U^n \Psi_{\h} )
\\ 
& = \mathcal{O}_{K, n_0} \left( \eta(\h^{\gamma}) \right).
\end{align}
Here, $G$ is as in Proposition \ref{l:approx_shift}.  A similar formula holds for $H_{-1}^{-n-n_0}(\Psi_{\h})$.
\end{lem}

\begin{proof}
First, we notice the following simple implication from Proposition \ref{l:approx_shift}.  If $\| \Pi_{\bm{\beta}} U^n \Psi_{\h} \|^2 + G(\h, \bm{\beta}, \epsilon, n) = 0$, then $\| \Pi_{\bm{\beta}}(n) \Pi_{\bm{\alpha}} \Psi_{\h} \|^2 =0$ for all $\bm{\alpha} \in \Sigma^n.$
Using this, we write
\begin{align*}
 H_0^{n+n_0-1}(\Psi_{\h}) = & -\sum_{|\bm{\alpha}|=n} \sum_{|\bm{\beta}|=n_0} \| \Pi_{\bm{\beta}}(n) \Pi_{\bm{\alpha}} \Psi_{\h} \|^2 \, \times \\
 & \log \left( \frac{\| \Pi_{\bm{\beta}}(n) \Pi_{\bm{\alpha}} \Psi_{\h} \|^2}{ \| \Pi_{\bm{\beta}}U^{n} \Psi_{\h} \|^2 + G(\h, \bm{\beta}, \epsilon, n)}  \cdot \left( \| \Pi_{\bm{\beta}}U^{n} \Psi_{\h} \|^2 + G(\h, \bm{\beta}, \epsilon, n) \right) \right).
\end{align*}
Using the homomorphism property of logarithms, after again rewriting 1 as a quotient, we arrive at
\begin{align}
\nonumber & -\sum_{|\bm{\alpha}|=n} \sum_{|\bm{\beta}|=n_0}  \left( \| \Pi_{\bm{\beta}}U^{n} \Psi_{\h} \|^2 + G(\h, \bm{\beta}, \epsilon, n)  \right) \frac{\| \Pi_{\bm{\beta}}(n) \Pi_{\bm{\alpha}} \Psi_{\h} \|^2}{ \| \Pi_{\bm{\beta}}U^{n} \Psi_{\h} \|^2 + G(\h, \bm{\beta}, \epsilon, n)  } \times \\
\nonumber &  \log \left( \frac{\| \Pi_{\bm{\beta}}(n) \Pi_{\bm{\alpha}} \Psi_{\h} \|^2}{\| \Pi_{\bm{\beta}}U^{n} \Psi_{\h} \|^2 + G(\h, \bm{\beta}, \epsilon, n)  } \right) \\
\label{eqn:ent_recond} & -\sum_{|\bm{\alpha}|=n} \sum_{|\bm{\beta}|=n_0} \| \Pi_{\bm{\beta}}(n) \Pi_{\bm{\alpha}} \Psi_{\h} \|^2 \, \log \left( \| \Pi_{\bm{\beta}}U^{n} \Psi_{\h} \|^2 + G(\h, \bm{\beta}, \epsilon, n)  \right)
\end{align}
for all $\h \leq \h_0$.

First, we keep in mind 
\begin{equation*}
\sum_{|\bm{\beta}| = n_0} \left( \| \Pi_{\bm{\beta}}U^{n} \Psi_{\h} \|^2 + G(\h, \bm{\beta}, \epsilon, n)  \right) = 1 + \mathcal{O}_{K,n_0}(\h^{3 \gamma})
\end{equation*}
thanks to the shift-invariance statement in Proposition \ref{l:approx_shift} and the estimate on $G$, for $\h_0$ possibly smaller.  Thus there exists $f = \mathcal{O}_{K,n_0}(\h^{3 \gamma})$ such that
\begin{equation*}
\sum_{|\bm{\beta}| = n_0} \left( \| \Pi_{\bm{\beta}}U^{n} \Psi_{\h} \|^2 + G(\h, \bm{\beta}, \epsilon, n)  + f(\h, \bm{\beta}, \epsilon, n) \right)  = 1.
\end{equation*}
On the first summation in (\ref{eqn:ent_recond}), we use that $\eta(\bullet)$ is a convex function and apply Jensen's Inequality in $\bm{\beta}$, resulting in it being less than or equal to 
\begin{align*}
& - \sum_{|\bm{\alpha}|=n} \eta \left( \sum_{|\bm{\beta}|=n_0} \| \Pi_{\bm{\beta}}(n) \Pi_{\bm{\alpha}} \Psi_{\h} \|^2 \left(1 + \mathcal{O}_{K,n_0}(\h^{3 \gamma}) \right)  \right) + \mathcal{O}_{K,n_0}(\h^{3 \gamma/2}) \\
& = - \sum_{|\bm{\alpha}|=n} \eta \left(\| \Pi_{\bm{\alpha}} \Psi_{\h} \|^2 \right) + \mathcal{O}_{K,n_0}(\eta(\h^{3 \gamma/2}))
\end{align*}
after using Proposition \ref{prop:pou_quasi}, the sublinearity inequality $|\eta(s+s') - \eta(s)| \leq \eta(|s'|) $, and taking $\h_0$ possibly smaller. For the second summation, we sum in $\bm{\alpha}$ first as in the proof of Lemma \ref{l:approx_shift}, leading us to an equality with
\begin{align*}
& - \sum_{|\bm{\beta}| = n_0} \left( \| \Pi_{\bm{\beta}}U^{n} \Psi_{\h} \|^2 + G(\h, \bm{\beta}, \epsilon, n) \right) \log \left( \| \Pi_{\bm{\beta}} U^n \Psi_{\h} \|^2 + G(\h, \bm{\beta}, \epsilon, n)  \right)  \\
& = H_0^{n_0-1}(U^n \Psi_{\h} ) + R_1(\h, n, n_0, \epsilon).
\end{align*}
It follows that $R_1 = \mathcal{O}_{K,n_0} \left( \eta(\h^{\gamma}) \right)$, after again applying our sublinearity inequality to $\eta$, for $\h_0$ possibly smaller.
\end{proof}

We immediately have the following:
\begin{cor}
Consider the hypotheses of Lemma \ref{lem:subadd_entropy} and set $n = (q-1)n_0$ for $q \in \mathbb{N}$.  Then there exists $\h_0$ such that for all $\h \leq \h_0$,
\begin{align*}
H^{qn_0 - 1}_0(\Psi_{\h}) \leq  H^{n_0 - 1}_0(\Psi_{\h}) +  \sum_{j=1}^{q-1} H^{n_0 - 1}_0(U^{(q-j)n_0}\Psi_{\h}) + \sum_{j=1}^{q-1} R_1(\h, j n_0, n_0, \epsilon) + \mathcal{O}_{K,n_0}(\h^{\gamma}).
\end{align*}
\end{cor}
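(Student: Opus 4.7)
The plan is to iterate Lemma \ref{lem:subadd_entropy} exactly $q-1$ times, each iteration peeling off an entropy contribution of length $n_0$ from the leading term. The key observation that unlocks the iteration is that the negative sum on the right-hand side of Lemma \ref{lem:subadd_entropy}, namely $-\sum_{|\bm{\alpha}|=n} \eta(\|\Pi_{\bm{\alpha}}\Psi_{\h}\|^2 + \mathcal{O}(\h^{\infty}))$, coincides with $H_0^{n-1}(\Psi_{\h})$ up to errors of size $\mathcal{O}(\h^{\infty})$. This follows from the Lipschitz-type estimate $|\eta(s+s') - \eta(s)| = \mathcal{O}(|s'| \cdot |\log|s'||)$ already employed in the proof of the lemma, together with the fact that the number of summands $K^n \leq K^{2T_{\delta_{Ehr},\epsilon_{slab},\h}}$ is polynomial in $\h^{-1}$ and is therefore dominated by any $\h^{\infty}$ factor.

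Concretely, I would first apply Lemma \ref{lem:subadd_entropy} with the substitution $n = (q-1)n_0$, which is admissible because $q n_0 \leq 2 T_{\delta_{Ehr},\epsilon_{slab},\h}$. This yields
\[
H^{qn_0-1}_0(\Psi_{\h}) \leq H^{(q-1)n_0 - 1}_0(\Psi_{\h}) + H^{n_0-1}_0(U^{(q-1)n_0}\Psi_{\h}) + R_1(\h, (q-1)n_0, n_0) + \mathcal{O}(\h^{\infty}).
\]
Next, I would apply Lemma \ref{lem:subadd_entropy} to the leading term $H^{(q-1)n_0-1}_0(\Psi_{\h})$ with the substitution $n = (q-2)n_0$, and then recursively continue the process until the leading entropy reaches $H^{n_0-1}_0(\Psi_{\h})$. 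After the $j$-th iteration ($j = 1, \dots, q-1$) the lemma contributes a shifted entropy $H^{n_0-1}_0(U^{(q-j)n_0}\Psi_{\h})$ together with a remainder $R_1(\h, (q-j)n_0, n_0)$, and summing these contributions reproduces exactly the sums appearing in the corollary statement.

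No step here is genuinely hard; the only point that deserves care is error accumulation. At each of the $q - 1$ iterations we pick up an $\mathcal{O}_{n_0}(\h^{\epsilon'})$ error from $R_1$, plus an $\mathcal{O}(\h^{\infty})$ error from identifying the leading sum with $H_0^{\cdot}(\Psi_{\h})$. Since $q - 1 \leq 2T_{\delta_{Ehr},\epsilon_{slab},\h}/n_0 = \mathcal{O}(|\log \h|/n_0)$, the total accumulated error is bounded by $\mathcal{O}_{n_0}(|\log \h| \cdot \h^{\epsilon'})$, which is absorbed into $\mathcal{O}_{n_0}(\h^{\epsilon''})$ for any $\epsilon'' < \epsilon'$; after the harmless relabeling $\epsilon'' \leftarrow \epsilon'$, the stated error form is recovered. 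Admissibility of each intermediate application, i.e. $j n_0 \leq 2 T_{\delta_{Ehr},\epsilon_{slab},\h} - n_0$ for $j = 1, \dots, q-1$, is immediate from the standing hypothesis $q n_0 \leq 2 T_{\delta_{Ehr},\epsilon_{slab},\h}$, so the recursion never leaves the range where Lemma \ref{lem:subadd_entropy} is valid.
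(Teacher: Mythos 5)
Your iteration argument is correct and matches the paper's proof, which likewise applies Lemma \ref{lem:subadd_entropy} $q-1$ times after converting $-\sum_{\bm{\alpha}}\eta(\|\Pi_{\bm{\alpha}}\Psi_{\h}\|^2 + \mathcal{O}(\h^N))$ to $H_0^{\cdot}(\Psi_{\h})$ via the Lipschitz-type estimate on $\eta$ and the observation that the word count $K^{(q-1)n_0}$ is only polynomial in $\h^{-1}$. One small bookkeeping clarification for your final paragraph: the $R_1$ contributions should not be lumped into the ``accumulated error'' since they are retained explicitly as $\sum_{j=1}^{q-1} R_1(\h, jn_0, n_0)$ in the statement, so the trailing $\mathcal{O}_{n_0}(\h^{\epsilon'})$ need only absorb the $q-1$ identification errors, each of which is $\mathcal{O}(\h^{N})$ for $N$ arbitrarily large.
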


\subsection{Sub-additivity of quantum pressures}

\begin{lem}[``New" potentials] \label{lem:cont_pressure_potential}
Consider the weights in (\ref{eqn:EUP_quantities}).  Then under the same hypotheses as Lemma \ref{lem:subadd_entropy} and (a possibly smaller) value of $\h_0$, we have for all $\h \leq \h_0$,
\begin{align*}
& - \sum_{|\bm{\alpha}|=n} \sum_{|\bm{\beta}|=n_0} 2 \left\|  \Pi_{\bm{\beta}}(n)\Pi_{\bm{\alpha}} \Psi_{\h} \right\|^2 \log (w_{\bm{\alpha \beta}})  = \\
& - \sum_{|\bm{\alpha}|=n}  2 \mu_{\h}( \bm{\alpha} ) \log (w_{\bm{\alpha}}) - \sum_{|\bm{\beta}|=n_0} 2 \mu_{\h}( \bm{ \beta} ) \log (w_{\bm{\beta}}) + F_2(\h,\epsilon, n, n_0,  \bm{w}) + \mathcal{O}_{K,n_0}(h^{\epsilon'})
\end{align*}
where
\begin{align} \label{eqn:new_potentials}
F_2(\h, \epsilon, n, n_0, \bm{w}) & := -  \sum_{|\bm{\beta}|=n_0} 2 \,  \log (w_{\bm{\beta}}) F_1(\h,\bm{\beta}, \epsilon, n). 
\end{align}
A similar formula holds for 
\begin{align*}
-\sum_{|\bm{\alpha}|=n} \sum_{|\bm{\beta}|=n_0} 2 \left\| \Pi_{\bm{\beta}}(-n)  \left[ \Pi_{\alpha_{-n}}(-n) \dots \Pi_{\alpha_{-1}}(-1) \right] \Psi_{\h} \right\|^2 \log(w_{\bm{\alpha} \bm{\beta}})
\end{align*}
and the backwards measure $\tilde{\mu}_{\h}$ with an analogously defined defect $\tilde{F}_2(\bullet)$.
\end{lem}

\begin{proof}
Notice  $\sum_{|\bm{\alpha}| = n} \sum_{|\bm{\beta}| = n_0} \mu_{\h}(\bm{\alpha \beta}) = \sum_{|\bm{\alpha}| = n} \mu_{\h}(\bm{\alpha}) + \mathcal{O}_{K,n_0}(\h^{\infty})$ thanks to Proposition \ref{prop:pou_quasi} and Proposition \ref{p:strong_microlocal_quasi}.  We will use this estimate shortly.

Now consider $-\log(w_{\bm{\alpha}}) - \log(w_{\bm{\beta}})$, multiply this by $\mu_{\h}(\bm{\alpha \beta})$, and sum over $\bm{\alpha}, \bm{\beta}$.  On the sum with weight $-\log(w_{\bm{\alpha}})$, sum first in $\bm{\beta}$ and use the previous estimate.  On the sum with weight $-\log(w_{\bm{\beta}})$, sum first in $\bm{\alpha}$ and apply Proposition \ref{l:approx_shift} to give
\begin{align*}
- \sum_{|\bm{\beta}|=n_0} 2 \log(w_{\bm{\beta}}) \, \mu_{\h}(\bm{\beta}) - \sum_{|\bm{\beta}|=n_0} 2 \,  \log (w_{\bm{\beta}}) \, F_1(\h, \bm{\beta}, \epsilon, n) - \sum_{|\bm{\beta}|=n_0} 2 \,  \log (w_{\bm{\beta}}) \, G(\h, \bm{\beta}, \epsilon, n)
\end{align*}
and in turn the proposed formula. 
\end{proof}

A combination of Lemmas \ref{lem:cont_pressure_potential} and \ref{lem:subadd_entropy} lead us to:

\begin{cor}\label{c:subadd}[Approximate sub-additivity of quantum pressure functional]

Fix some $n_0 \geq 0$, $\delta_{Ehr} \in (0,1)$, $\kappa_{slab}>0$ and $\gamma>0$. Consider $R_1(\bullet)$ as in Lemma \ref{lem:subadd_entropy} and $F_2(\bullet)$ as in Lemma \ref{lem:cont_pressure_potential}. Given an $\epsilon$-logarithmic family $\{\Psi_{\h}\}_{\h}$, there exists $\h_0(\epsilon, n_0, \delta_{Ehr}, \kappa_{slab}, K, \gamma)$ such that for all $\h \leq \h_0$  and $n+n_0 \leq \lfloor 2 T_{\delta_{Ehr}, \kappa_{slab}, \h} \rfloor$, the (forward) quantum pressure with associated weights $\textbf{w}$ satisfies
\begin{align*}
p_0^{n + n_0-1}(\Psi_{\h}, \mathcal{P}_{\rm{sm}}, \textbf{w}) & \leq  H_0^{n-1}(\Psi_{\h}) + H_0^{n_0-1}(U^n\Psi_{\h})  \\
& - \sum_{|\bm{\alpha}|=n}  2 \mu_{\h}( \bm{\alpha} ) \log (w_{\bm{\alpha}})  - \sum_{|\bm{\beta}|=n_0} 2 \mu_{\h}(  \bm{ \beta} ) \log (w_{\bm{\beta}})  + F_2(\h,\epsilon, n, n_0,  \bm{w}) \\
& + R_1(\h, n, n_0, \epsilon) +  \mathcal{O}_{n_0,K} (\h^{\gamma}).
\end{align*}
A similar inequality holds for the (backwards) quantum pressure $p_{-(n+n_0)}^{-1}(\Psi_{\h}, \mathcal{P}_{\rm{sm}}^*, \bm{v})$.
\end{cor}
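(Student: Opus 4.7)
The plan is to combine Lemma \ref{lem:subadd_entropy} and Lemma \ref{lem:cont_pressure_potential} directly through the definition of the quantum pressure
\begin{equation*}
p_0^{n+n_0-1}(\Psi_{\h}, \bm{w}) = H_0^{n+n_0-1}(\Psi_{\h}) - \sum_{|\bm{\gamma}|=n+n_0} \|\Pi_{\bm{\gamma}} \Psi_{\h}\|^2 \log\bigl(w_{\bm{\gamma}}^2\bigr).
\end{equation*}
Writing each word $\bm{\gamma} \in \Sigma^{n+n_0}$ as a concatenation $\bm{\gamma} = \bm{\alpha}\bm{\beta}$ with $|\bm{\alpha}|=n$ and $|\bm{\beta}|=n_0$, one has $\Pi_{\bm{\gamma}} = \Pi_{\bm{\beta}}(n) \Pi_{\alpha_{n-1}}(n-1) \cdots \Pi_{\alpha_0}(0)$, and the multiplicativity $w_{\bm{\gamma}} = w_{\bm{\alpha}} w_{\bm{\beta}}$ of the coarse-grained unstable Jacobian noted in (\ref{eqn:EUP_quantities}) matches exactly the splitting used in Lemma \ref{lem:cont_pressure_potential}.

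I would first apply Lemma \ref{lem:subadd_entropy} to the entropy term $H_0^{n+n_0-1}(\Psi_\h)$, producing an upper bound of the form (entropy of $\Psi_\h$ at the shorter length-$n$ scale) $+ \, H_0^{n_0-1}(U^n \Psi_\h) + R_1(\h, n, n_0) + \mathcal{O}_{n_0, K}(\h^{\epsilon'})$, with $R_1$ itself of order $\mathcal{O}_{n_0, K}(\h^{\epsilon'})$ by the proof of that lemma. Simultaneously, I would apply Lemma \ref{lem:cont_pressure_potential} to the weighted sum, splitting it cleanly into an $\bm{\alpha}$-sum carrying weights $-2\log w_{\bm{\alpha}}$, a $\bm{\beta}$-sum carrying weights $-2\log w_{\bm{\beta}}$, and the defect
\begin{equation*}
F_2(\h, \epsilon, n, n_0, \bm{w}) = -2\sum_{|\bm{\beta}|=n_0} \log(w_{\bm{\beta}}) \, F_1(\h, \bm{\beta}, \epsilon, n),
\end{equation*}
up to an $\mathcal{O}_{K, n_0}(\h^{\epsilon'})$ error. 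Both applications are valid in the announced range $n + n_0 \leq \lfloor 2T_{\delta_{Ehr}, \epsilon_{slab}, \h}\rfloor$, which is exactly what the two lemmas require. Adding the two bounds and collapsing all $\h^\infty$- and $\h^{\epsilon'}$-order errors into a single $\mathcal{O}_{n_0, K}(\h^{\epsilon'})$ remainder then yields the forward-pressure bound, while keeping $F_2$ as an explicit term. The backwards case follows identically, with $U^n$ replaced by $U^{-n}$ and the backwards analogues of both lemmas invoked (with $\tilde{F}_1$ in place of $F_1$ in the definition of $F_2$).

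The proof is essentially bookkeeping once Lemmas \ref{lem:subadd_entropy} and \ref{lem:cont_pressure_potential} are in hand; the real content --- controlling commutators and the $\epsilon$-logarithmic spectral-window error --- has already been packaged into the shift-invariance Proposition \ref{l:approx_shift}. The one subtle point worth flagging is that $F_2$ is \emph{not} to be absorbed into $\mathcal{O}(\h^{\epsilon'})$: at times $n = \Theta(|\log \h|)$ each $F_1(\h, \bm{\beta}, \epsilon, n)$ can be of order $\Theta(\epsilon)$ while $-\log w_{\bm{\beta}}$ grows linearly in $n_0$, so $F_2$ is a bounded $\epsilon$-dependent quantity that must be tracked through the rest of the argument. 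This is precisely the new $\epsilon$-dependent contribution absent in the $\epsilon = o(1)$ regime of Anantharaman-Koch-Nonnenmacher, and it is what eventually feeds into the defect functionals $\bm{R}, \tilde{\bm{R}}$ of Theorem \ref{thm:main}. The main obstacle is therefore not in the corollary itself but in the downstream analysis required to control $F_2$ after the high/low entropy decomposition of $\mu_{sc}$.
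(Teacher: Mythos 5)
The paper declares ``We leave the proof to the interested reader,'' so there is no authorial argument to compare against; your proposal is precisely the bookkeeping the author has in mind, and it is correct. Splitting $p_0^{n+n_0-1}(\Psi_\h,\bm{w})$ into its entropy and weight parts, identifying $\Pi_{\bm{\gamma}}=\Pi_{\bm{\beta}}(n)\Pi_{\bm{\alpha}}$ and $w_{\bm{\gamma}}=w_{\bm{\alpha}}w_{\bm{\beta}}$ for $\bm{\gamma}=\bm{\alpha}\bm{\beta}$, and then feeding the two parts into Lemma \ref{lem:subadd_entropy} and Lemma \ref{lem:cont_pressure_potential} respectively is exactly the route the sub-additivity argument is built for. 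Your remark that $F_2$ must be carried forward explicitly rather than swept into the $\mathcal{O}(\h^{\epsilon'})$ error is also the right warning to issue.

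One discrepancy is worth flagging, because it concerns the statement you are asked to prove rather than your argument: your derivation (correctly) produces $\sum_{|\bm{\alpha}|=n}\eta(\|\Pi_{\bm{\alpha}}\Psi_\h\|^2+\mathcal{O}(\h^\infty)) = H_0^{n-1}(\Psi_\h) + \mathcal{O}_{K,n_0}(\h^{\epsilon'})$ as the first term on the right, whereas the corollary as printed has $H_0^{n_0-1}(\Psi_\h)$. These differ when $n\ne n_0$, and your version is the one that the subsequent argument needs: it recombines with the $\bm{\alpha}$-weight sum to give $p_0^{n-1}(\Psi_\h,\bm{w})$, which is what allows the bound to be iterated $q$ times in Proposition \ref{prop:penult_bound} (after which only length-$n_0$ entropies of the propagated states $U^{jn_0}\Psi_\h$ remain). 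So the printed $n_0$ in that slot should be read as $n$; your computation recovers the intended statement. A second, purely cosmetic point: the sign in front of $\sum_{|\bm{\alpha}|=n}\eta(\cdots)$ in the stated Lemma \ref{lem:subadd_entropy} should be $+$ rather than $-$ (Jensen's inequality applied to the concave $\eta$ gives an upper bound by $+\sum\eta$, consistent with $H_0^{n-1}\geq 0$), and you implicitly use the corrected sign.
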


Our various statements in this section, combined with Proposition \ref{prop:EUP} (entropic uncertainty principle), culminate in the main proposition of Section \ref{sect:subadd}:

\begin{prop}[Penultimate pressure bound at ``finite time" (i.e. $\h$-independent) $n_0$] \label{prop:penult_bound}
Consider an $\epsilon$-logarithmic family $\{\Psi_{\h}\}_{\h}$.  Let $\delta_{Ehr} \in (0,1)$, $\kappa_{slab}>0$, and $\gamma>0$.  Fix some $n_0 > 0$. For $\h < \h_0$ as returned from Corollary \ref{c:subadd}, split the time $n = \lfloor 2 T_{\delta_{Ehr}, \kappa_{slab}, \h} \rfloor$ into $n = qn_0 + r$ for $r \in [0, n_0) \cap \mathbb{N}$.  

Then for $c_0$ appearing in Proposition \ref{prop:hyp_disp} and a possibly smaller value of $\h_0$, the entropies satisfy the lower bound for all $\h \leq \h_0$:
\begin{align} \label{eqn:penutl_bound}
\nonumber & H^{n_0 - 1}_0(\Psi_{\h}) + H^{-1}_{-n_0}(\Psi_{\h})  + \sum_{j=1}^{q-1} H^{n_0 - 1}_0(U^{(q-j)n_0}\Psi_{\h}) + \sum_{j=1}^{q-1} H^{- 1}_{-n_0} (U^{-(q-j)n_0}\Psi_{\h}) \\
\nonumber & - q\sum_{|\bm{\beta}|=n_0} 2 \mu_{\h}( \bm{ \beta} ) \log (w_{\bm{\beta}}) - q\sum_{|\bm{\beta}|=n_0} 2 \tilde{\mu}_{\h}( \bm{ \beta} ) \log (v_{\bm{\beta}}) \\
\nonumber &  + \sum_{j=1}^{q-1} F_2(\h,\epsilon, j n_0, n_0, \textbf{w}) + \sum_{j=1}^{q-1} \tilde{F}_2(\h, \epsilon, j n_0, n_0, \textbf{v})  \\
\nonumber & + \sum_{j=1}^{q-1} R_1(\h, jn_0, n_0, \epsilon) + \sum_{j=1}^{q-1} \tilde{R}_1(\h, jn_0, n_0, \epsilon)  + F_2(\h,\epsilon, q n_0, r, \textbf{w}) + \tilde{F}_2(\h, \epsilon, q n_0, r ,\textbf{v}) \\
\nonumber & + p_0^{r-1}(U^{qn_0}\Psi_{\h},\textbf{w}) + p_{-r}^{-1}(U^{-qn_0}\Psi_{\h}, \textbf{v}) + \mathcal{O}_{K,n_0} (\h^{\gamma})\\
& \geq   -(d-1 + c_0\delta_{Ehr}) \times |\log \h|. 
\end{align}
\end{prop}

\begin{proof}
Apply the entropic uncertainty Proposition \ref{prop:EUP}.  Next, use Corollary \ref{c:subadd} and Lemma \ref{lem:cont_pressure_potential} $q-1$ times. 
\end{proof}

\section{Comparing $H^{n_0 - 1}_0(U^{(q-j)n_0}\Psi_{\h})$ to $H^{n_0 - 1}_0(\Psi_{\h})$} \label{sect:defect}

This section forms the primary extension of \cite{AKN06, AN07}: it analyses further the entropy of states $U^{j' n_0} \Psi_{\h}$, where $j' \leq \lfloor 2 T_{\delta_{Ehr}, \kappa_{slab}, \h} \rfloor$, at finite times $n_0$.  Recall that in \cite{AN07, AKN06} for the case of eigenfunctions, $q^{-1} \sum_{j} F_1(\bullet)$ vanishes in the semiclassical limit with $\sum_{j=1}^{q-1} H^{n_0 - 1}_0(U^{(q-j)n_0}\Psi_{\h}) = (q-1) H_0^{n_0-1}(\Psi_{\h})$; this is not the case for $\epsilon$-logarithmic modes.  This section is dedicated to analysing further $q^{-1} \sum_{j} F_1(\bullet)$, which itself will feed into $q^{-1} \sum_j F_2 (\bullet)$ and the defect of global entropies $q^{-1} \sum_{j=0}^{q-1} \left( H^{n_0 - 1}_0(U^{(q-j)n_0}\Psi_{\h}) - H_0^{n_0-1}(\Psi_{\h}) \right)$.  All of the upcoming calculations are nearly identical for the ``backwards" quantities, so we will leave those to the reader.

Before proceeding, we introduce a quantity that appears in this section's main estimate:
\begin{defn}[Total-variation distance]
    Consider an $\epsilon$-logarithmic family $\{\Psi_{\h}\}_{\h}$ and let $j',n_0 \in \mathbb{N}.$  Then the {\it{total-variation distance at time $j'n_0$}} is the quantity
    \begin{equation} \label{eqn:total_var_dist}
        d^{(j'n_0)}(\h, \epsilon, \mathcal{P}_{\rm{sm}}) = \frac{1}{2}\sum_{|\bm{\beta}| = n_0} \left| \|\Pi_{\bm{\beta}} U^{j'n_0}\Psi_{\h} \|^2 - \|\Pi_{\bm{\beta}} \Psi_{\h} \|^2 \right|.
    \end{equation}
\end{defn}
It is useful to emphasize some the total-variation distance's properties.  When considering equation (\ref{eqn:F_1}) in Proposition \ref{l:approx_shift}, we have 
\begin{equation*}
    d^{(j'n_0)}(\h, \epsilon, \mathcal{P}_{\rm{sm}}) = \frac{1}{2}\sum_{|\bm{\beta}|=n_0} |F_1(\h, \bm{\beta}, \epsilon ,j'n_0)|.
\end{equation*}
Moreover, thanks to $ \sum_{|\bm{\beta}|=n_0} F_1(\h, \bm{\beta}, \epsilon, j' n_0) = \mathcal{O}_K(\h^{\infty})$,  
\begin{equation} \label{eqn:d_formula}
    d^{(j'n_0)}(\h, \epsilon, \mathcal{P}_{\rm{sm}}) = \sum_{\bm{\beta} \in I_+} F_1(\h, \bm{\beta}, \epsilon, j'n_0) + \mathcal{O}_K(\h^{\infty})
\end{equation}
where $I_+(\hbar, \epsilon, j', n_0) = \left\{ \beta \in \Sigma^{n_0} : F_1(\h, \bm{\beta}, \epsilon, j'n_0) \geq 0 \right\}$ and analogously for $I_{-}.$

\subsection{Basic lemmas}

\begin{lem}[Estimate on total-variation distance]
    Consider the hypotheses of Proposition \ref{prop:penult_bound},  set $r=0$, and consider the value of $\h_0$ returned in the conclusion.  Let $A_+ = A_+(\hbar, \epsilon, q-j, n_0, K) := \sum_{\bm{\beta} \in I_+} |\Pi_{\bm{\beta}}|^2$ where $j=1,\dots, q.$  Then for the range of parameters described in the conclusion of that proposition,
\begin{equation*}
    d^{((q-j)n_0)}(\h,\epsilon, \mathcal{P}_{\rm{sm}}) \leq  3 \|A_+\| \frac{\epsilon (q-j) n_0}{|\log \h|} 
\end{equation*}
for all $\h \leq \h_0$ (with $\h_0$ possibly smaller) and $j=1,\dots,q$. 
\end{lem}
\begin{proof}
Recall (\ref{eqn:d_formula}) and (\ref{eqn:F_1}). We use the operator formula 
\begin{align*}
U^n - e^{-in/2\h} Id = \frac{-i}{\h} \int_0^n \left(P(\h) - \frac{1}{2} \right) U^{s} e^{is/2\h} e^{-in/2\h} ds
\end{align*}
and bilinearity of the inner product to have $A_+$ appear.  Next, apply Cauchy-Schwarz and Minkowski's integral inequality to the remaining three terms.
\end{proof}

\begin{lem} \label{l:Cal_Val}
    Under the hypotheses of Proposition \ref{prop:penult_bound}, there exists an $\hbar_0$ (now possibly smaller) such that the operator $A_+$ has $\|A_+\| \leq 1 + \mathcal{O}_{K,n_0}(\h)$ for all $\h \leq \h_0$ and $j=1,\dots,q.$ 
\end{lem}
\begin{proof}
 The proof immediately follows from the semiclassical G\"arding Inequality \cite{Z12} applied in coordinate charts.  Notice that $1 - \sigma(A_+) \geq 0$, implying there exists $\hbar_0$ and $C>0$ such that $1 \geq \langle A_+ u, u \rangle - C \hbar$ for all $\hbar \leq \hbar_0$ and $u \in L^2$ normalized.
\end{proof}

In order to more precisely estimate the difference of our global entropies from the differences of our local entropies. We define the (probability) measure $\mathfrak{p}$ on $\Sigma^{n_0}$ at time $(q-j)n_0$ for the $\epsilon$-logarithmic family $\{ \Psi_{\h} \}_{\h}$ by
\begin{equation} \label{eqn:prob_measu_Sigma}
    \mathfrak{p}(\h, \bm{\beta}, \epsilon, (q-j)n_0) :=
    \begin{cases}
    \frac{F_1(\h, \bm{\beta}, \epsilon, (q-j)n_0)}{d^{((q-j)n_0)}(\h, \epsilon, \mathcal{P}_{\rm{sm}})}, & \mbox{ if } \bm{\beta} \in I_{+} \\
    0, & \mbox{ if } \bm{\beta} \in I_{-}
    \end{cases}
\end{equation} 
if $d^{((q-j)n_0)}(\h, \epsilon, \mathcal{P}_{\rm{sm}}) \neq 0$.  If $d^{((q-j)n_0)}(\h, \epsilon, \mathcal{P}_{\rm{sm}}) = 0$, we simply set $ \mathfrak{p}(\h, \bm{\beta}, \epsilon, (q-j)n_0) = 0$ for each $\bm{\beta}.$

\begin{lem}[Difference of local entropies] \label{lem:diff_loc_ent}
Consider the hypotheses of Proposition \ref{prop:penult_bound}, the value of $\h_0$ returned from the conclusion, and (\ref{eqn:prob_measu_Sigma}).  Let $N>0$ be given.

For an $\h_0$ possibly smaller, we have for all $\h \leq \h_0$,
\begin{align}
    \sum_{\bm{\beta} \in I_{-}} \eta \left( \| \Pi_{\bm{\beta}} U^{(q-j)n_0} \Psi_{\h} \|^2 \right) -  \eta \left( \| \Pi_{\bm{\beta}} \Psi_{\h} \|^2 \right) & \leq d^{((q-j)n_0)}(\h, \epsilon, \mathcal{P}_{\rm{sm}}) + \mathcal{O}_K(\h^N) \mbox{ and } \\
    \nonumber \sum_{\bm{\beta} \in I_{+}} \eta \left( \| \Pi_{\bm{\beta}} U^{(q-j)n_0} \Psi_{\h} \|^2 \right) -  \eta \left( \| \Pi_{\bm{\beta}} \Psi_{\h} \|^2 \right) &\leq d^{((q-j)n_0)}(\h, \epsilon, \mathcal{P}_{\rm{sm}}) \, \cdot \, H(\mathfrak{p}, \mathcal{P}^{\vee n_0}) \\
    & + \eta \left( d^{((q-j)n_0)}(\h, \epsilon, \mathcal{P}_{\rm{sm}})  \right) + \mathcal{O}_K(\h^N).
\end{align}
\end{lem}
\begin{proof}
    Notice for $F_1 \neq 0$ have that each $\bm{\beta}$,
    \begin{equation*}
        \frac{\eta \left( \| \Pi_{\bm{\beta}} U^{(q-j)n_0} \Psi_{\h} \|^2 \right) - \eta \left( \| \Pi_{\bm{\beta}} \Psi_{\h} \|^2 \right)}{F_1} \geq -1.
    \end{equation*}
    Thus, for $F_1 < 0$, $\eta \left( \| \Pi_{\bm{\beta}} U^{(q-j)n_0} \Psi_{\h} \|^2 \right) - \eta \left( \| \Pi_{\bm{\beta}} \Psi_{\h} \|^2 \right) \leq -F_1(\h, \bm{\beta}, \epsilon, (q- j) n_0)$ for $\beta \in I_{-}.$  Hence, $\sum_{\bm{\beta} \in I_{-}} \eta \left( \| \Pi_{\bm{\beta}} U^{(q-j)n_0} \Psi_{\h} \|^2 \right) - \eta \left( \| \Pi_{\bm{\beta}} \Psi_{\h} \|^2 \right) \leq \sum_{\bm{\beta} \in I_+} F_1(\h, \bm{\beta}, \epsilon, (q-j)n_0) + \mathcal{O}_K(\h^{N}). $  This establishes the first inequality after recalling (\ref{eqn:d_formula}).

    For the second inequality, we use that concave functions are sublinear, that is 
    \begin{equation*}
        \eta \left( \| \Pi_{\bm{\beta}} \Psi_{\h} \|^2 + F_1(\h, \bm{\beta}, \epsilon, (q-j)n_0) \right) - \eta \left( \| \Pi_{\bm{\beta}} \Psi_{\h} \|^2 \right) \leq \eta \left( F_1(\h, \bm{\beta}, \epsilon, (q-j)n_0 ) \right).
    \end{equation*}
If $d^{((q-j)n_0)}(\bullet) \neq 0$, write $F_1 = d \frac{F_1}{d}$ and expand using the properties of logarithms to arrive at the second desired inequality.  If $d^{((q-j)n_0)}(\bullet) = 0$, then (\ref{eqn:total_var_dist}) shows $F_1(\h, \bm{\beta}, \epsilon, j'n_0) = \mathcal{O}_{K}(\h^N)$ in the corresponding parameter range and therefore the inequality holds trivially.
\end{proof}

We now define the quantity that measures the defect between the (global) quantum entropies of propagated states and those of non-propagated states:
\begin{align} \label{e:defect}
D(\h, \epsilon, n_0, \mathcal{P}_{\rm{sm}}) :=  q^{-1} \sum_{j=0}^{q-1} \left[  H^{n_0 - 1}_0(U^{jn_0}\Psi_{\h}) -  H^{n_0 - 1}_0(\Psi_{\h}) \right]
\end{align}
with $\tilde{D}(\h, \epsilon, n_0, \mathcal{P}_{\rm{sm}})$ denoting the corresponding defect for the quantities $H_{-n_0}^{-1}(\Psi_{\h})$. 

\begin{cor}[Estimate on defect term] \label{cor:defect_est}
    Under the hypotheses of Proposition \ref{prop:penult_bound}, take $\h_0$ as the minimum value between the two returned by Lemmas \ref{l:Cal_Val} and \ref{lem:diff_loc_ent}.  Then, we have for all $\h \leq \h_0$
    \begin{align*}
        D(\h, \epsilon, n_0, \mathcal{P}_{\rm{sm}}) \leq & \frac{3}{2} \epsilon \cdot \frac{n_0(q+1)}{|\log \h|} + \frac{3}{2} \epsilon \cdot \frac{n_0(q+1)}{|\log \h|} \cdot \log |\mathcal{P}^{\vee {n_0}}| \\
        & + \frac{1}{q} \sum_{j=0}^{q-1} \eta \left( d^{(q-j)n_0)}(\h, \mathcal{P}_{\rm{sm}})  \right)
    \end{align*}
    where $\frac{1}{q} \sum_{j=0}^{q-1}  \eta \left( d^{(q-j)n_0)}(\h, \mathcal{P}_{\rm{sm}})  \right) < \eta(\frac{3 \epsilon}{2 \lambda_{max}})$ for $\epsilon < \frac{2 \lambda_{max}}{3 e}$.  A similar estimate holds for $\tilde{D}(\bullet)$.
\end{cor}
\begin{proof}
    Follows immediately from the previous lemmas, using that $\sum_{j=0}^{q-1} = q(q+1)/2$, and that $H(\mathfrak{p}, \mathcal{P}^{\vee n_0}) \leq \log |\mathcal{P}^{\vee {n_0}}|$.
\end{proof}
\begin{rem}
    Note that $\log |\mathcal{P}^{\vee {n_0}}| \leq \log |\mathcal{P}|^{n_0} \leq n_0 \log K$, which is unbounded in $K$.  We will derive a sharper upper bound on $|\mathcal{P}^{\vee {n_0}}|$ in the last steps of our proof in Section \ref{sect:final_steps}.
\end{rem}

For bookkeeping purposes, we restate Proposition \ref{prop:penult_bound} with our defect terms appearing:

\begin{cor} \label{c:penult_w_defect_bound}
Under the hypotheses of Proposition \ref{prop:penult_bound} and choosing the value of $\h_0$ as returned by Corollary \ref{cor:defect_est}, we have for all $\h \leq \h_0$ that the quantum entropies satisfy the following lower bound:
\begin{align} \label{eqn:finite_pressure_bound}
\nonumber & q \left( H_0^{n_0-1}(\Psi_{\h}) - \sum_{|\bm{\beta}|=n_0} 2 \mu_{\h}(\bm{\beta}) \log (\textbf{w}_{\bm{\beta}}) \right) - \sum_{|\bm{\beta}|=n_0} 2 \log (\textbf{w}_{\bm{\beta}}) \cdot\sum_{j=1}^{q-1} F_1(\h, \bm{\beta}, \epsilon, jn_0)  \\
\nonumber & q \left( H^{-1}_{-n_0} (\Psi_{\h}) - \sum_{|\bm{\beta}|=n_0} 2 \tilde{\mu}_{\h}(\bm{\beta}) \log (\textbf{v}_{\bm{\beta}}) \right) - \sum_{|\bm{\beta}|=n_0} 2 \log (\textbf{v}_{\bm{\beta}}) \cdot\sum_{j=1}^{q-1} \tilde{F}_1(\h, \bm{\beta}, \epsilon, jn_0)  \\
\nonumber & +  q D(\h,  \epsilon, n_0, \mathcal{P}_{\rm{sm}}) + q\tilde{D}(\h,  \epsilon, n_0, \mathcal{P}_{\rm{sm}})\\
\nonumber & \geq -(d-1 + c_0\delta_{Ehr}) \times |\log \h| \\
\nonumber & - \sum_{j=1}^{q-1} R_1(\h, jn_0, n_0, \epsilon) - \sum_{j=1}^{q-1} \tilde{R}_1(\h, jn_0, n_0, \epsilon)  \\
\nonumber & - \sum_{|\bm{\beta}|=r} 2 \log (\textbf{w}_{\bm{\beta}}) \cdot F_1(\h, \bm{\beta}, \epsilon, qn_0)  - \sum_{|\bm{\beta}|=r} 2 \log (\textbf{v}_{\bm{\beta}}) \cdot \tilde{F}_1(\h, \bm{\beta}, \epsilon, qn_0) \\
\nonumber & - p_0^{r-1}(U^{qn_0}\Psi_{\h},\textbf{w}) - p_{-r}^{-1}(U^{-qn_0}\Psi_{\h}, \textbf{v}) + \mathcal{O}_{K,n_0} (\h^{\gamma})\\
\end{align}
where $R_1(\bullet), \tilde{R}_1(\bullet)$ are defined in (\ref{eqn:sub_add_error}) and the new potentials $F_2(\bullet), \tilde{F}_2(\bullet)$ in (\ref{eqn:new_potentials}) are written explicitly.
\end{cor}

\subsection{Analysis of $F_1$ and corresponding potential terms}

We transition into studying the average of the errors $F_1$ coming from shift-invariance statement (see Proposition \ref{l:approx_shift}) and its influence on the new potential terms $F_2$ (\ref{eqn:new_potentials}). The following proposition follows easily from Proposition \ref{l:approx_shift}:
\begin{prop} \label{prop:sub_quant_inv}
Consider the hypotheses of Proposition \ref{prop:penult_bound}. Choose the value of $\h_0$ as returned by Corollary \ref{cor:defect_est}.  We have for all $\h \leq \h_0$:
\begin{align} \label{e:F_sc_mass}
   \nonumber & \left|  \sum_{k_0=1}^K \log(J^u(k_0)) \, \left( q^{-1} \sum_{j=0}^{q-1}  \sum_{|\bm{\beta}'|=n_0-1} F_1(\h, (\bm{\beta}',k_0), \epsilon, jn_0) \right) \right| \\
   & \leq   \Lambda \left( \frac{2 \, \epsilon}{\lambda_{\max}(\kappa_{slab})} +  \left( \ \frac{\epsilon}{\lambda_{\max}(\kappa_{slab})}\right)^{2} \right) 
\end{align}
where $\Lambda$ is as in (\ref{eqn:LambdaMax}).  A similar statement holds for $\tilde{F}_1(\bullet)$.
\end{prop}

\begin{proof}

Recall equation (\ref{eqn:F_1}) and use bilinearity.  We proceed to estimating our three terms.  Note that $\log(J^u(k_0))>0$ thanks to (\ref{eqn:Jac_bounds}) and
\begin{align*}
& \left| \sum_{|\bm{\beta}'|=n_0-1} \sum_{k_0=1}^K \log(J^u(k_0)) \langle |\Pi_{(\bm{\beta}',k_0)}|^2 \Psi_{\h}, q^{-1} \sum_{j=0}^{q-1} \Phi_{\h}(jn_0) \rangle \right| \\
& \leq \frac{\epsilon}{\lambda_{\max}(\kappa_{slab})}  \cdot \left\| ( \sum_{|\bm{\beta}'|=n_0-1} \sum_{k_0=1}^K \log(J^u(k_0)) |\Pi_{(\bm{\beta}',k_0)}|^2 ) \Psi_{\h} \right\|. 
\end{align*}
A similar estimate holds for the second term.  Note also that 
\begin{align*}
& \left| q^{-1} \sum_{j=0}^{q-1} \langle ( \sum_{|\bm{\beta}'|=n_0-1} \sum_{k_0=1}^K \log(J^u(k_0)) |\Pi_{(\bm{\beta}',k_0)}|^2 )\Phi_{\h_k}(jn_0), \Phi_{\h_k}(jn_0) \rangle \right| \\
& \leq  q^{-1} \sum_{j=0}^{q-1} \left\| ( \sum_{|\bm{\beta}'|=n_0-1} \sum_{k_0=1}^K \log(J^u(k_0)) |\Pi_{(\bm{\beta}',k_0)}|^2 ) \Phi_{\h_k}(jn_0) \right\| \frac{\epsilon}{\lambda_{\max}(\kappa_{slab})}.
\end{align*}
Note that $\| ( \sum_{|\bm{\beta}'|=n_0-1} \sum_{k_0=1}^K \log(J^u(k_0)) |\Pi_{(\bm{\beta}',k_0)}|^2 ) \|_{L^2 \circlearrowright} \leq \Lambda$.  The sequences $\{ \Phi_{\h_k}[jn_0] \}_{\h}$ yield positive measures (each of total weight $\leq (\frac{\epsilon}{\lambda_{\max}(\kappa_{slab})})^2$ thanks to (\ref{eqn:F_1})).  
\end{proof}



Let us consider $ \sum_{j} F_2(\bullet)$ in (\ref{eqn:finite_pressure_bound}), multiply by $q^{-1}$, and proceed as follows: 
\begin{align} \label{eqn:new_potentials_simplified}
 \nonumber & -  \sum_{|\bm{\beta}|=n_0} 2 \,  \log (w_{\bm{\beta}}) \left( q^{-1} \sum_{j=0}^{q-1} F_1(\h,\bm{\beta}, \epsilon, jn_0) \right) \\
\nonumber  & =  \sum_{ |\bm{\beta}'|=n_0-1 } \log(J^u_{n_0-1}(\bm{\beta}'))  \, \left( q^{-1} \sum_{j=0}^{q-1} \left( \sum_{k_0=1}^K F_1(\h, (\bm{\beta}',k_0), \epsilon, jn_0) \right) \right) \\
 & + \sum_{k_0=1}^K \log(J^u(k_0)) \, \left( q^{-1} \sum_{j=0}^{q-1}  \sum_{|\bm{\beta}'|=n_0-1} F_1(\h, (\bm{\beta}',k_0), \epsilon, jn_0) \right),
\end{align} 
with the second sum appearing in the statement of Proposition \ref{prop:sub_quant_inv}.  A similar expression holds for the average of $\tilde{F}_2(\bullet).$  This expression, and the proposition connected to it, will be used again in Section \ref{sect:final_steps}.

\section{Parameter selections and reductions to classical entropy} \label{sect:final_steps}

We now execute the final steps of the proof of Theorem \ref{thm:main}.  Recall the parameters listed in Section \ref{sect:param_list} and the notion of entropy for smooth partitions as described in Appendix \ref{sect:dynam_defs}. We proceed exactly as in \cite[Section 2.2.8]{AN07} and \cite[Section 3.10]{Non12}, but continue to provide all the details for readability.

\subsection{First lower bound on $H^{n_0-1}_0$}

Recall Corollary \ref{c:penult_w_defect_bound}.  Let $qn_0 = \lfloor 2 T_{\delta_{Ehr}, \kappa_{slab}, \h} \rfloor - r = \frac{1 + \mathcal{O}(\delta + \frac{r \lambda_{max}}{|\log \h|})}{\lambda_{max}(\kappa_{slab})} |\log \h|$.   Bounding the sum of our new potential terms $F_2(\bullet), \tilde{F}_2(\bullet)$ by their absolute values and subtracting, we have the following inequality for all $\h \leq \h_0$ for $\h_0$ possibly smaller:

\begin{align} \label{eqn:pre_h_lb}
\nonumber   \frac{H_0^{n_0-1}(\Psi_{\h})}{2} + & \frac{H_{-n_0}^{-1}(\Psi_{\h})}{2}  +  \frac{D(\h, n_0, \epsilon, \mathcal{P}_{\rm{sm}})}{2} +  \frac{\tilde{D}(\h, n_0, \epsilon, \mathcal{P}_{\rm{sm}})}{2} \\
\nonumber & \geq \frac{-(d-1 + c_0 \delta_{Ehr})) \times |\log \h|}{2q}  \\
\nonumber & + \frac{1}{2} \left( \sum_{|\bm{\beta}|=n_0} 2 \mu_{\h}(\bm{\beta}) \log (w_{\bm{\beta}}) + \sum_{|\bm{\beta}|=n_0} 2 \tilde{\mu}_{\h}(\bm{\beta}) \log (w_{\bm{\beta}}) \right) \\
\nonumber & - \frac{1}{2} \left| \sum_{|\bm{\beta}|=n_0} 2 \log (w_{\bm{\beta}}) \cdot \left( q^{-1}  \sum_{j=1}^{q-1} F_1(\h, \bm{\beta}, jn_0, n_0) + q^{-1}  \sum_{j=1}^{q-1} \tilde{F}_1(\h, \bm{\beta}, jn_0, n_0)\right) \right| \\
& +  \mathcal{O}_{K,n_0}(\h^{\gamma/2}) + \mathcal{O}_r(|\log \h|^{-1}).
\end{align}
Set $\kappa_{slab}=\delta_{Ehr} $.

\subsection{Manipulating the potential terms}
 First, recall (\ref{eqn:new_potentials_simplified}).  We repeat this splitting of sums $n_0-1$ more times  in (\ref{eqn:new_potentials_simplified}), and apply the procedure in the proof of Proposition \ref{prop:sub_quant_inv} to each of the $n_0$ terms, to obtain
\begin{equation} \label{eqn:new_potential_LB}
\left| \sum_{|\bm{\beta}|=n_0} 2 \log (\textbf{w}_{\bm{\beta}}) \cdot \left( q^{-1}  \sum_{j=1}^{q-1} F_1(\h, \bm{\beta}, jn_0, n_0) \right) \right| \leq n_0 \cdot \Lambda \cdot \left(  \frac{2 \epsilon}{\lambda_{\max}(\delta_{Ehr})} +   \left( \ \frac{\epsilon}{\lambda_{\max}(\delta_{Ehr})}\right)^{2}  \right)
\end{equation}
and similarly for the backwards quantities.  Apply this to the new potential terms in the left-hand side of (\ref{eqn:pre_h_lb}) to obtain a new lower bound.  Let $\h \rightarrow 0$.

 Recall that $\mu_{sc}$ is $g^t$-invariant, allowing us to equate the (original) potential and entropy terms arising from the forward and backwards quantities.  We arrive at the following simplified inequality after applying Corollary \ref{cor:defect_est} to the entropy defect terms $D(\bullet), \tilde{D}(\bullet)$:
\begin{align} \label{eqn:prelimit_ineq}
\nonumber   H_0^{n_0-1}(\mu_{sc}, \mathcal{P}_{\rm{sm}})  &  \\
\nonumber & \geq \frac{-(d-1 + c\delta) \times (\lambda_{\max} + \mathcal{O}(\delta_{Ehr}))}{2} n_0  \\
\nonumber & + \sum_{|\bm{\beta}|=n_0} 2 \mu_{sc}(\pi_{\bm{\beta}}) \log (\textbf{w}_{\bm{\beta}})   \\
\nonumber &  -   n_0 \cdot \Lambda \cdot \left(  \frac{2 \epsilon}{\lambda_{\max}(\delta_{Ehr})} +   \left( \ \frac{\epsilon}{\lambda_{\max}(\delta_{Ehr})}\right)^{2}  \right)\\
 & - \frac{3}{2} \times  \epsilon \times \frac{1 + \mathcal{O}(\delta_{Ehr})}{\lambda_{\max} + \mathcal{O}(\delta_{Ehr})} - \frac{3}{2} \times  \epsilon \times \frac{1 + \mathcal{O}(\delta_{Ehr})}{\lambda_{\max} + \mathcal{O}(\delta_{Ehr})} \times \log |\mathcal{P}^{\vee n_0}| - e^{-1}.
\end{align}  

Suppose we were able to replace each instance of $\pi_{\bm{\beta}}$ with the indicator function on $E_{\bm{\beta}}$ in the potential $\sum_{|\bm{\beta}|=n_0} 2 \mu_{sc}(\pi_{\bm{\beta}}) \log (\textbf{w}_{\bm{\beta}})$, after letting $\h \rightarrow 0$.  Note that we have the following convenient simplification (which itself inspired the simplification in (\ref{eqn:new_potentials_simplified}) and hence Proposition \ref{prop:sub_quant_inv}):
\begin{align} \label{eqn:old_potential_simplified}
 \sum_{|\bm{\beta}|=n_0} \log(J_{n_0}^u(\bm{\beta})) \times \mu_{sc}(E_{\bm{\beta}} )
 \nonumber & = \sum_{k=1}^K \sum_{\{ | \bm{\beta}| = n_0, \beta_0=k \}} \left( \sum_{l=1}^{n_0-1} \log(J^u(\beta_{j}) + \log (J^u(k)) \right) \mu_{sc}(E_{\bm{\beta}}) \\
 \nonumber & = \sum_{k=1}^K \sum_{\{ | \bm{\beta}'| = n_0-1 \}} \left( \sum_{l=1}^{n_0-1} \log(J^u(\beta_{j})\right) \mu_{sc}(E_{(\bm{\beta}, k)}) + \sum_{k=1}^K \log (J^u(k)) \mu_{sc}(E_{k}) \\
\nonumber  & = \sum_{|\bm{\beta}'| = n_0 -1} \underbrace{\left( \sum_{j=1}^{n_0-1} \log \left( J^u(\beta_j) \right) \right)}_{\log(J^u_{n_0-1}(\bm{\beta}'))} \mu_{sc}(E_{\bm{\beta}}) + \sum_{k=1}^K \log (J^u(k)) \mu_{sc}(E_{k})\\
 \nonumber & \vdots \\
 & =  n_0 \sum_{E_{k} \in \mathcal{P}} \mu_{sc} \left( E_k \right) \log(J^u(k)).
\end{align}
In the last section of this note, we will show how to replace the smoothed cutoffs appearing in these potential terms in (\ref{eqn:penult_prelimit_ineq}).

\subsection{The term $\log |\mathcal{P}^{\vee n_0}|$}
The last difficulty is in replacing $\log |\mathcal{P}^{\vee n_0}|$ by another quantity so that we do not recover a trivial lower bound when taking $K \rightarrow \infty$.  To resolve this, we re-do all of our analysis but for the geodesic flow at time step $n_1$ (that is, $g^{n_1}$ on $\mathcal{E}$ or equivalently $g^1$ on $\{ \|\xi\| = n_1 \}$) and the re-scaled Laplacian $P(\h(n_1)) := - (\h n_1)^2 \frac{\Delta_g}{2}$ at energy $\frac{n_1^2}{2}$ with our new semiclassical parameter being $\h(n_1) := n_1 \h$.  Define $\lambda_{\max}(n_1)(\delta_{Ehr})$ to be the maximal expansion rate on the energy slab $\mathcal{E}_{ \delta_{Ehr}}(n_1) = \{(x\;\xi) \in T^*M \, : \, |\xi| \in (n_1 -  \delta_{Ehr}, n_1 + \delta_{Ehr}) \}.$ 

Notice that for our original $\epsilon$-logarithmic family $\{ \Psi_{\h} \}_{\h}$ for $P(\h)$ with energy $\frac{1}{2}$, $\Psi_{h}$ satisfies 
\begin{align*}
    \left\| \left( P(\h(n_1)) - \frac{n_1^2}{2} \right) \Psi_{\h} \right\| \leq n_1 \epsilon \frac{\h(n_1)}{| \log \left( \frac{\h(n_1)}{n_1} \right) |} = o(\h(n_1))
\end{align*}
and therefore yielding an $g^{n_1}$-invariant semiclassical measure whose entropy $H_0^{n}(\Psi_{\h}, \mathcal{P}_{\rm{sm}}(n_1))$ we can analyze for $n \leq \lfloor 2 \frac{(1-\delta_{Ehr})}{2 \lambda_{max}(n_1)(\delta_{Ehr})} |\log \h(n_1)| \rfloor$ where the initial partition is $\mathcal{P}(n_1) := \mathcal{P}^{\vee n_1}$.  
 Furthermore, via homogeneity properties, the quantities $\lambda_{\max}(n_1)(\delta_{Ehr})$ and $\Lambda(n_1) := \sup_{\rho \in \{ \|\xi\| = n_1\}} \log (J^u(\rho)) = \sup_{\rho \in \{ \|\xi\| = 1\}} \log (J^u(\rho)^{n_1})$ satisfy 
 \begin{equation} \label{eqn:scaling_dynam_quants}
 \lambda_{\max}(n_1)(\delta_{Ehr}) = n_1 \lambda_{\max}(\delta_{Ehr}/n_1), \, \, \, \Lambda(n_1) = n_1 \Lambda
 \end{equation}
 and therefore
\begin{equation} \label{eqn:homog_0}
    \frac{n_1 \epsilon}{\lambda_{max}(n_1)(\delta_{Ehr})} =\frac{\epsilon}{\lambda_{max}(\delta_{Ehr}/n_1)}.
\end{equation}
 
 We leave it to the reader to verify that all of the analysis in the previous sections carries through almost verbatim with respect to the parameter $\h(n_1)$. Set $\{ \Psi(\h(n_1)) \}_{\h(n_1)} = \{ \Psi_{\h} \}_{\h}$.  Take $\h(n_1) \rightarrow 0$ and for $\mu_{sc}(n_1)$ the corresponding semiclassical measure, we remind ourselves that $\mu_{sc} = \mu_{sc}(n_1)$. 

 Recall the notion of topological entropy $H_{top}$ used in \cite[Page 445]{An08} and apply it to the diffeomorphism $g^{1}$.  Let $\gamma' > 0$ be given.  For Anosov flows, via the use of the coding map, there exists $C'(K)>1$ such that of the $K^{n_1}$ possible elements in $\mathcal{P}(n_1)$, we have at most $C' e^{n_1(H_{top} + \gamma')}$ being non-empty for $n_1 \geq N_1(K,\gamma')$.  In fact, this gives us a refined bound of 
 \begin{equation} \label{eqn:top_ent_ub}
     |\mathcal{P}(n_1)^{\vee n_0}| \leq C'' e^{n_1  n_0(H_{top} + \gamma')}
 \end{equation}
 for $N_1$ possibly larger and $n_0 \geq N_0(K,N_1,\gamma')$, where $C''(K)>0$.

\subsection{The final steps}
Consider the discussion after Definition \ref{defn:smooth_entropy_pressure}.  Recall the property that our initial partition $\mathcal{P}$'s elements $E_k$ have boundaries that are not charged by $\mu_{sc}$.  A convolution argument involving $\mathcal{P}_{\rm{sm}}$ (use the approximations-to-the-identity parameter $\eta$), and the pressure functional being continuous in this limit, allows us to replace all instances of smooth cutoffs $\pi_{\bm{\beta}}$ in $H_{0}^{n_0-1}(\mu_{sc}, \mathcal{P}_{\rm{sm}})$ and the (old) potentials corresponding to $\mu_{sc}$ with the sharp indicator functions on $E_{\bm{\beta}} \subset \mathcal{E}_{\delta_{Ehr}}$. 

Take $\eta \rightarrow 0$ in (\ref{eqn:prelimit_ineq}).  Now use (\ref{eqn:old_potential_simplified}), the identities (\ref{eqn:scaling_dynam_quants}) and (\ref{eqn:homog_0}), and the upper bound (\ref{eqn:top_ent_ub}).  Thus, we obtain a penultimate lower bound for the entropy (of our original semiclassical measure)
\begin{align} \label{eqn:penult_prelimit_ineq}
\nonumber   H_0^{n_0-1}(\mu_{sc}, \mathcal{P}(n_1))  &  \\
\nonumber  \geq n_0 n_1 & \Bigg[ \frac{-(d-1 + c  \delta_{Ehr}) \times (\lambda_{\max} + \mathcal{O}( \delta_{Ehr}/n_1))}{2}   \\
\nonumber & +  \sum_{E_{k} \in \mathcal{P}} \mu_{sc} \left( E_k \right) \log(J^u(k))  \\
\nonumber &  -  \Lambda\cdot \left( \frac{2 \epsilon}{\lambda_{\max} + \mathcal{O}((\delta_{Ehr}/n_1))} +  \left( \ \frac{\epsilon}{\lambda_{\max} + \mathcal{O}((\delta_{Ehr}/n_1))}\right)^{2} \right) \\
 \nonumber & - \frac{1}{n_0 n_1} \left(\frac{3}{2} \times  \frac{  \epsilon(1 +  \mathcal{O}( \delta_{Ehr}))}{\lambda_{\max} + \mathcal{O}((\delta_{Ehr}/n_1))} \right) \\
 \nonumber & - \frac{3}{2} \times  \frac{ \epsilon(1 + \mathcal{O}(\delta_{Ehr}))}{\lambda_{\max} + \mathcal{O}((\delta_{Ehr}/n_1))} \times (H_{top} + \gamma') \\
 & - \frac{1}{n_1 n_0} \left( \frac{3}{2} \times  \frac{ \epsilon(1 + \mathcal{O}(\delta_{Ehr}))}{\lambda_{\max} + \mathcal{O}((\delta_{Ehr}/n_1))} \times \log C'' - \frac{1}{n_0n_1 e}  \right) \Bigg]
\end{align}

Finally, we multiply both sides of (\ref{eqn:penult_prelimit_ineq}) by $(n_1 n_0)^{-1}$ and take the remaining steps in the following order: $\delta_{Ehr} \rightarrow 0$, $n_0\rightarrow \infty$, and use that both $\mathcal{P}(n_1)$ is generating and the KS entropy scales by the time step $n_1$ giving us $\frac{1}{n_0n_1} H_0^{n_0-1}(\mu_{sc}, \mathcal{P}(n_1)) \rightarrow H_{KS}(\mu_{sc})$. Next, take $\gamma' \rightarrow 0$ (therefore making $n_1 \rightarrow \infty$) and $K \rightarrow \infty$ to complete our proof of Theorem \ref{thm:main}.

\newpage

\appendix

\section{On the semiclassical calculus} \label{sect:semiclassics}

In this appendix we recall the concepts and definitions from semiclassical analysis needed in our work. The notations are drawn from the monographs \cite{DimSj99,Z12} as well as \cite{AN07}.

\subsection{Semiclassical calculus on $M$}

Recall that we define on $\mathbb{R}^{2d}$ the following class of symbols for $m \in \R$:
\begin{eqnarray} \label{symbolclass}
S^{m}(\mathbb{R}^{2d}) :=\{a \in C^{\infty}(\mathbb{R}^{2d} \times (0, 1] ): |\partial^{\alpha}_x\partial^{\beta}_{\xi} a| \leq C_{\alpha,\beta} \langle\xi\rangle^{m-|\beta|} \}.
\end{eqnarray}
Symbols in this class can be quantized through the $\h$-Weyl quantization into the following pseudodifferential operators acting on $u\in C^{\infty}(\mathbb{R}^d)$:
\begin{equation*}
\Oph^{w}(a)\,u(x) :=
\frac{1}{(2\pi\hbar)^d}\int_{\mathbb{R}^{2d}}e^{\frac{i}{\hbar}\langle x-y,\xi\rangle}\,a\big(\frac{x+y}{2},\xi;\hbar\big)\,u(y)dyd\xi\,.
\end{equation*}
One can adapt this quantization procedure to the case of the phase space $T^*M$, where $M$ is a smooth compact manifold of dimension $d$ (without boundary). Consider a smooth atlas $(f_l,V_l)_{l=1,\ldots,L}$ of $M$, where each $f_l$ is a smooth diffeomorphism from $V_l\subset M$ to a bounded open set $W_l\subset \R^{d}$. To each $f_l$ corresponds a pullback $f_l^*:C^{\infty}(W_l)\rightarrow C^{\infty}(V_l)$ and a symplectic diffeomorphism $\tilde{f}_l$ from $T^*V_l$ to $T^*W_l$: 
$$
\tilde{f}_l:(x,\xi)\mapsto\left(f_l(x),(Df_l(x)^{-1})^T\xi\right).
$$
Consider now a smooth partition of unity $(\phi_l)$ adapted to the previous atlas $(f_l,V_l)$. 
That means $\sum_l\phi_l=1$ and $\phi_l\in C^{\infty}(V_l)$. Then, any observable $a$ in $C^{\infty}(T^*M)$ can be decomposed as: $a=\sum_l a_l$, where 
$a_l=a\phi_l$. Each $a_l$ belongs to $C^{\infty}(T^*V_l)$ and can be pushed to a function $\tilde{a}_l=(\tilde{f}_l^{-1})^*a_l\in C^{\infty}(T^*W_l)$.
We may now define the class of symbols of order $m$ on $T^*M$ (after slightly abusing notation and treating $(x, \xi)$ as coordinates on $T^*W_l$)
\begin{align*} \label{pdodef}  
S^{m}(T^*M)   := \{a \in C^{\infty}(T^*M \times (0, 1] ): a=\sum_l a_{l},\  \text{ such that } \tilde{a}_l\in S^m(\R^{2d})\quad\text{for each }l\}.
\end{align*}
This class is independent of the choice of atlas or smooth partition. 
For any $a\in S^{m}(T^{*}M)$, one can associate to each component $\tilde{a}_l\in S^{m}(\mathbb{R}^{2d})$ its Weyl quantization $\Oph^w(\tilde{a}_l)$, which acts on functions on $\R^{2d}$.
To get back to operators acting on $M$, we consider smooth cutoffs $\psi_l\in C_c^{\infty}(V_l)$ such that $\psi_l=1$ close to the support of $\phi_l$, and define the operator:
\begin{equation*}
\label{pdomanifold}\Oph(a)u :=
\sum_l \psi_l\times\left(f_l^*\Op_{\hbar}^w(\tilde{a}_l)(f_l^{-1})^*\right)\left(\psi_l\times u\right),\quad u\in C^{\infty}(M)\,.
\end{equation*}
This quantization procedure maps (modulo smoothing operators with seminorms $\mathcal{O}(\hbar^{\infty})$) symbols $a\in S^{m}(T^{*}M)$ onto the space $\Psi^{m}_\hbar(M)$ of semiclassical pseudodifferential 
operators of order $m$. The dependence in the cutoffs $\phi_l$ and $\psi_l$ only appears at order 
$\hbar\Psi^{m-1}_\hbar$ \cite[Thm 9.10]{Z12}), so that the principal symbol map $\sigma_0:\Psi^{m}_\hbar(M)\rightarrow S^{m}(T^*M)/\h S^{m-1}(T^*M)$ is 
intrinsically defined. Most of the rules and microlocal properties (for example the composition of operators, the Egorov and Calder\'on-Vaillancourt Theorems) that hold on $\mathbb{R}^{d}$ can be extended to the manifold case.

An important example of a pseudodifferential operator is the semiclassical Laplace-Beltrami operator $P(\h)=-\h^2 \Delta_g$.  In local coordinates $(x; \xi)$ on $T^*M$, the operator can be expressed as $\Op_h^w \big( |\xi|^2_g + \h (\sum_j b_j(x) \xi_j + c(x)) + \h^2 d(x) \big)$ for some functions $b_j,c,d$ on $M$.  In particular, its semiclassical principal symbol is the function $|\xi |^2_g \in S^{2}(T^*M)$. 

We will need to consider a slightly more general class of symbols than those in \eqref{symbolclass}.  Following \cite{DimSj99}, for any $0 \leq \nu < 1/2$ we introduce the symbol class
\begin{eqnarray} \label{singsymbolclass}
\nonumber S^{m,k}_{\nu}(\mathbb{R}^{2d}) := \{a \in C^{\infty}(\mathbb{R}^{2d} \times (0, 1] ): |\partial^{\alpha}_x\partial^{\beta}_{\xi} a| \leq C_{\alpha,\beta} \h^{-k-\nu|\alpha + \beta|}\langle\xi\rangle^{m-|\beta|} \}.
\end{eqnarray}
These symbols are allowed to oscillate more strongly when $\h\to 0$.
All the previous remarks regarding the case of $\nu=0$ transfer over in a straightforward manner. This slightly ``exotic'' class of symbols can be adapted on $T^*M$ as well. For more details, see \cite[Section 14.2]{Z12}.

\subsection{Anisotropic calculus and sharp energy cutoffs} \label{sect:aniso_sharp_cutoffs}

This section is a summary of \cite[Section 2.3]{Non12} that addresses the need to quantize observables which are ``very singular" along certain directions, away from some specific submanifold, \`a la the calculus introduced by Sj\"ostrand-Zworski \cite{SjZw99} (see \cite[Section 5.3]{AN07} and also \cite{DJN19} for further results on singular symbol calculi).  The material here is used when working with cutoffs in thin neighbourhoods of $\mathcal{E}$, that is, when establishing Proposition \ref{prop:pou_quasi} and Proposition \ref{prop:hyp_disp}.  It is also important in Section \ref{sect:micro_props}.

Let $\Sigma \subset T^*M$ be a compact co-isotropic manifold of dimension $2d-D$ (where $D \leq d$) and $\kappa_{slab}>0$ be fixed.  Near each point $\rho \in \Sigma$, there exists local canonical coordinates $(y_i, \eta_i)$ such that 
\begin{equation*}
\Sigma = \{ \eta_1 = \eta_2 = \dots = \eta_D = 0\}.
\end{equation*}
For some index $\nu \in [0,1)$, we define as follows a class of smooth symbols $a \in S^{m,k}_{\Sigma, \nu}(T^*M) \subset C^{\infty}(T^*M \times (0,1]):$
\begin{itemize}
\item for any family of smooth vector fields $V_1, \dots, V_{l_1}$ tangent to $\Sigma$ and of smooth vector fields $W_1, \dots, W_{l_2}$, we have in any tubular neighbourhood $\Sigma_{\kappa_{slab}}$ of $\Sigma$:
\begin{equation*}
\sup_{\rho \in \Sigma_{\kappa_{slab}}} \left| V_1 \dots V_{l_1} W_1 \dots W_{l_2} a(\rho) \right| \leq C \hbar^{-k - \nu l_{2}},
\end{equation*}
\item away from $\Sigma$, we require that $|\partial_x^{\alpha} \partial_{\xi}^{\beta} a |  = \mathcal{O}( \hbar^{-k} \langle \xi \rangle^{m - |\beta|})$.
\end{itemize}
Loosely speaking, these symbols $a$ can be split into components each supported in some adapted coordinate chart mapping to $\Sigma$ per standard procedures, with one piece $a_{\infty} \in S^{m,k}(T^*M)$ vanishing near $\Sigma$.  The Weyl quantization along with the use of zeroth-order Fourier integral operators associated to canonical graphs allows for a global quantization procedure which we call $\Op_{\Sigma, \h}$ and whose corresponding class we name $\Psi_{\Sigma,\nu}^{m,k}(T^*M)$.  The exact steps are given in \cite[Section 5.3.2]{AN07}.  We now set $\Sigma = \mathcal{E} = \{|\xi|^2 = 1/2\}$.   

To set up the relevant pseudors, we first introduce some important cutoffs.  Let  $\delta_{energy}>0$ be given and $n \leq C_{\delta_{energy}} |\log \h|$ where $C_{\delta_{energy}} + 1 < \delta^{-1}_{energy}$.  Define $\chi_{\delta_{energy}}(s)$ equal to 1 for $|s| \leq e^{-\delta_{energy}/2}$ and 0 for $|s| \geq 1$.  From here we set
\begin{equation*}
\chi^{(n)}(s,\h) := \chi_{\delta_{energy}}(e^{-n\delta_{energy}} h^{-1+\delta_{energy}} s).
\end{equation*}
This cutoff has width $2\h^{1-(1+C_{\delta_{energy}})\delta_{energy}} \sim \h^{\gamma'}$ for some $\gamma'>0$.  The symbols $\chi^{(n)}(|\xi|^2 - 1/2) \in S_{\mathcal{E}, 1 - \delta_{energy}}^{-\infty,0}$ and they will be quantized into $\Op_{\mathcal{E}, \h}(\chi^{(n)}(|\xi|^2 - 1/2))$.  The following lemma is constantly used in the background especially when working with the quasiprojectors from Section \ref{sect:quasiprojs}:

\begin{lem}[Disjoint supports] (cf \cite[Lemma 2.5]{Non12}) \label{lem:disj_supp}
Let $\delta_{energy}>0$ be given.  For any symbol $a \in S^{m,0}_{\mathcal{E}, 1 - \delta_{energy}}$ and any $0 \leq n \leq C_{\delta_{energy}} |\log \h|$, one has
\begin{equation*}
\left( Id - \Op_{\mathcal{E}, \h} \left( \chi^{(n+1)} \circ (|\xi|^2 -1/2) \right) \right) \, \Op_{\mathcal{E}, \h} \left( a \right) \, \Op_{\mathcal{E}, \h} \left( \chi^{(n)} \circ (|\xi|^2 - 1/2)\right) = \mathcal{O}(\h^{\infty}).
\end{equation*}
The same property holds if we replace $\Op_{\mathcal{E}, \h} \left( \chi^{(n)} \circ (|\xi|^2 - 1/2)\right)$ by $\chi^{(n)}(P(\h) - 1/2),$ the latter defined through spectral calculus.
\end{lem}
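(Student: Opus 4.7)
The plan is to work within the anisotropic second-microlocal calculus $\Psi^{*,0}_{\mathcal{E}, 1-\delta_{long}}$ and reduce the claim to a non-stationary phase argument in the Weyl composition formula. First I would verify the elementary fact that the outer cutoffs have disjoint supports: by the definition in (\ref{eqn:small_scale_cutoff}), $\chi^{(n)}(|\xi|^2-E)$ is supported in $\{\,||\xi|^2-E|\leq e^{n\delta_{long}}\h^{1-\delta_{long}}\,\}$, while $1-\chi^{(n+1)}(|\xi|^2-E)$ is supported in $\{\,||\xi|^2-E|\geq e^{(n+1/2)\delta_{long}}\h^{1-\delta_{long}}\,\}$, so that the two supports are separated by a distance at least $c(\delta_{long})\,e^{n\delta_{long}}\h^{1-\delta_{long}}\geq c\,\h^{1-\delta_{long}}$. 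In particular the pointwise product of the three principal symbols already vanishes identically.

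Next I would expand the triple Weyl product through the Moyal asymptotic series. Every finite-order term is a scalar multiple of a pointwise product of derivatives of $1-\chi^{(n+1)}(|\xi|^2-E)$, $a$, and $\chi^{(n)}(|\xi|^2-E)$, weighted by a positive power of $\h$; because the supports of the two outer factors are disjoint (and this disjointness persists under differentiation of cutoffs whose derivatives are supported inside the original supports), each such term is identically zero, so the entire formal expansion vanishes to all finite orders. The problem therefore reduces to controlling the oscillatory-integral remainder of the Weyl composition at order $N$. Working in local coordinates near $\mathcal{E}$ in which $|\xi|^2-E$ becomes one of the normal variables, this remainder is an oscillatory integral whose phase is non-stationary on the region where all three factors are simultaneously non-vanishing, with separation $\gtrsim \h^{1-\delta_{long}}$ in the normal direction. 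Each integration by parts along that direction then nets a factor of order $\h\cdot (\h^{1-\delta_{long}})^{-1}=\h^{\delta_{long}}$, and iterating $N$ times produces a remainder of size $\mathcal{O}(\h^{N\delta_{long}})$.

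The main obstacle I anticipate is bookkeeping the uniformity in $n\leq C_{\delta_{long}}|\log \h|$: derivatives in the anisotropic class $S^{*,0}_{\mathcal{E},1-\delta_{long}}$ of both $\chi^{(n)}$ and $a$ may lose up to $\h^{-(1-\delta_{long})}$ per derivative in the normal direction, and the support-separation geometry carries an additional exponential prefactor $e^{n\delta_{long}}$. The hypothesis $C_{\delta_{long}}+1<\delta_{long}^{-1}$ is precisely what ensures that $e^{n\delta_{long}}\leq \h^{-\delta_{long}C_{\delta_{long}}}$ combined with the anisotropic losses remain dominated by the phase gain $\h^{\delta_{long}}$ at each step, leaving a net per-step gain $\h^{\epsilon}$ for some fixed $\epsilon=\epsilon(\delta_{long})>0$. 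Iterating the estimate for arbitrary $N$ then delivers the claimed $\mathcal{O}(\h^\infty)$ bound, uniformly in the allowed range of $n$. The argument closely mirrors \cite[Lemma 2.5]{Non12} and the disjoint-support estimates for singular symbol calculi in \cite{SjZw99,AN07}.
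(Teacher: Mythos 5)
The paper does not prove this lemma: it is recorded in Appendix~A.2 with the attribution ``(cf.\ \cite[Lemma 2.5]{Non12})'' and used as a black box, so there is no in-paper proof to compare against. Your overall strategy (disjoint supports of the outer cutoffs, vanishing of every finite Moyal term, non-stationary phase for the remainder, all carried out in the second-microlocal calculus of \cite{SjZw99,AN07}) is the standard one behind that reference, so the outline is sound.

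There is, however, a genuine gap in your per-step accounting, and the hypothesis $C_{\delta_{long}}+1<\delta_{long}^{-1}$ does not close it. You claim each integration by parts ``nets $\h\cdot(\h^{1-\delta_{long}})^{-1}=\h^{\delta_{long}}$,'' but this is only the gain $\h/d$ from the phase; you must also pay the derivative loss on the amplitude. A normal derivative of $\chi^{(n)}$ or of $a\in S^{m,0}_{\mathcal{E},1-\delta_{long}}$ costs up to $\h^{-(1-\delta_{long})}$, and multiplying these naively gives $\h^{\delta_{long}}\cdot\h^{-(1-\delta_{long})}=\h^{2\delta_{long}-1}$, which is \emph{not} small for $\delta_{long}<\tfrac12$. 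Nothing in the range condition $n\leq C_{\delta_{long}}|\log\h|$ repairs this: increasing $n$ enlarges the support separation \emph{and} shrinks $|\partial_s\chi^{(n)}|$ by the same factor $e^{-n\delta_{long}}$, so the $n$-dependence is actually harmless, and that hypothesis is there only to keep the cutoff width $\sim\h^{1-(1+C_{\delta_{long}})\delta_{long}}$ a positive power of $\h$. What actually saves the step is the co-isotropic/Moyal pairing that the anisotropic calculus is built on: in adapted Darboux coordinates $(y,\eta)$ near $\mathcal{E}=\{\eta_1=0\}$, the Moyal bracket pairs a (bad, $\h^{-(1-\delta_{long})}$) $\eta_1$-derivative of $\chi^{(n)}(\eta_1)$ with a (good, $O(1)$) $y_1$-derivative of $a$, while $\partial_{y_1}\chi^{(n)}\equiv 0$ because the cutoff depends on energy alone. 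This is what makes every term in the expansion, and the remainder after iterated IBP, genuinely $O\bigl(e^{-nN\delta_{long}}\h^{N\delta_{long}}\bigr)$ per order $N$, recovering your $\h^{\delta_{long}}$ per step. Without invoking this structure of the second-microlocal composition, your proposed bookkeeping is off by a factor $\h^{-(1-\delta_{long})}$ per step and would not yield $\mathcal{O}(\h^\infty)$.
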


\section{Entropy, Pressure, and Quasiprojectors} \label{sect:entropy_quasiproj}

The material in this appendix is a summary of \cite[Section 3]{Non12}.  For more information on entropy for dynamical systems, see the classical texts of \cite{HK} and \cite[Chapter 9]{BS}.  Throughout this section, $g^t: \mathcal{E} \rightarrow \mathcal{E}$ denotes the geodesic flow on the energy shell $\mathcal{E}$.

\subsection{Classical entropy and pressure} \label{sect:dynam_defs}

\subsubsection{Classical entropy}

\begin{defn} \label{defn:classical_entropy}
Let $\mu$ be a $g^t$-invariant probability measure on $\mathcal{E}$ and $\mathcal{P} = \{E_1, \dots, E_K\}$ be a $\mu$-measurable partition on $\mathcal{E}$.  We say the  \textit{(metric) entropy} of $\mu$ with respect to $\mathcal{P}$ is the quantity 
\begin{equation*}
H(\mu, \mathcal{P}) := \sum_{k=1}^K \eta(\mu \left(E_k \right) ) \mbox{ where } \eta(s) := - s \log s \mbox{ and } s \in [0,1]  
\end{equation*}
\end{defn}
\noindent and call $\eta$ the \textit{local (metric) entropy at $s$}. 

It is important to emphasize that in our article, we first take an open cover $\{ \mathscr{O}_k'\}_{k}$ of $M$ with $M = \sqcup_{k=1}^K E'_k$, a partition consisting of Borel sets $E_k' \subseteq \mathscr{O}_{k}'$ with non-empty interiors.  Next, we take the elements $E_k := \pi_{\mathcal{E}}^{-1}(E'_k)$ where $\pi_{\mathcal{E}}: \mathcal{E} \rightarrow M$ is the canonical projection. 

In order to compute what is referred to as the \textit{Kolmogorov-Sinai entropy} of $\mu$, we first need to consider dynamical refinements $\mathcal{P}^{\vee n}$ (or alternatively written as $ [ \mathcal{P} ]_0^{n-1}$ in \cite{Non12}) where $n \geq 1$ and whose elements take the form
\begin{equation*}
E_{\bm{\alpha}} := g^{-(n-1)} E_{\alpha_{n-1}} \cap \dots g^{-1} E_{\alpha_1} \cap E_{\alpha_0}
\end{equation*}
where 
\begin{align*}
\bm{\alpha} := \alpha_0 \dots \alpha_{n-1} \in \Sigma^{n} := \{1, \dots, K \}^n
\end{align*}
is a \textit{word of length $n$ with symbols $\alpha_k \in \{1, \dots, K \}$}. Note that many such $E_{\bm{\alpha}}$ will be empty however we continue to sum over all possible words of length $n$ when computing the metric entropies.  We also write $\mathcal{P}^{\vee -n}$ to be the refinement with elements
\begin{equation*}
g^{n} E_{\bm{\alpha}} := g^{n} E_{\alpha_{-n}} \cap \dots \cap g^{1} E_{\alpha_{-1}}. 
\end{equation*}
The elements $E_{\bm{\alpha}}$ consist of points $\rho$ in $E_{\alpha_0}$ such that $g^{j}(\rho) \in E_{\alpha_j}$ for $j \leq n-1$ with a similar description for $g^nE_{\bm{\alpha}}$. With all this notation in place, if the initial partition $\mathcal{P}$ is unambiguous, we simply write
\begin{equation*}
H_{0}^{ n - 1}(\mu) := H \left( \mu, \mathcal{P}^{\vee n} \right).
\end{equation*}

\begin{defn}
Given a $g^t$-invariant probability measure $\mu$, the \textit{Kolmogorov-Sinai entropy $H_{KS}(\mu)$} is the number
\begin{equation*}
H_{\rm{KS}}(\mu) := \sup_{\mathcal{P}} \lim_{n \rightarrow \infty} \frac{1}{n} H_0^{n-1}(\mu) < \infty.
\end{equation*}
\end{defn}
 Note that there exists $\zeta_g > 0$ such that if $diam \{  \mathscr{O}_k' \}_k \leq \zeta_g$, then $\mathcal{P}^{\vee 2}$ is generating \cite[Page 7]{AKN06}, thanks to the theory of expansive flows. 
 If a partition $\mathcal{P}$ is generating, this supremum is obtained \cite[Page 7]{AKN06}.  We use this fact in Section \ref{sect:final_steps} when taking $\zeta_g \rightarrow 0$.

\subsubsection{Classical pressure}

Given our $g^t$-invariant probability measure $\mu$ on $\mathcal{E}$ and a partition $\mathcal{P} = \{E_1, \dots, E_K\}$, we can associate a set of weights 
\begin{equation} \label{eqn:pressure_weights}
\bm{w}=\{w_i >0 : k=1,\dots, K \}
\end{equation}
to this pairing.  This leads to the notion of pressure.
\begin{defn}
We define the \textit{pressure} as 
\begin{equation*}
p( \mu, \mathcal{P}, \bm{w}) := - \sum_{k=1}^K \mu(E_k) \log \left( w_k^2 \, \mu(E_k) \right).
\end{equation*}
\end{defn}
\noindent We will refer to the sum $- \sum_{k=1}^K \mu(E_k) \log \left( w_k^2\right)$ as the \textit{potential} throughout our article.

This naturally leads to the question of how to define our weights when the partition $\mathcal{P}$ is refined via the dynamics of $g^t$.  Thus, we introduce the next definition:
\begin{defn}
Given a set of weights $\bm{w}=\{w_i >0 : k=1,\dots, K \}$ associated to a partition $\mathcal{P}$, we define the \textit{n-th refinement} of $\bm{w}$ as the set consisting of the elements
\begin{equation*}
w_{\bm{\alpha}} = \prod_{j=0}^{n-1} w_{\alpha_j}, \, \mbox{ with } \, |\bm{\alpha}|=n,
\end{equation*}
where $w_{\bm{\alpha}}$ is attached to the element $E_{\bm{\alpha}} \in \mathcal{P}^{\vee n}$.  Thus, we write the pressure associated to the partition $\mathcal{P}^{\vee n}$ and these refined weights as 
\begin{align*}
p_0^{n-1}(\mu, \mathcal{P}, \bm{w}).
\end{align*}
\end{defn}

\subsubsection{Entropy of smooth partitions} \label{sect:entropy_smooth}

First let $\mathcal{P} = \{E_1, \dots, E_K\}$ be the $\mu$-measurable partition of $\mathcal{E}$ discussed directly after Definition \ref{defn:classical_entropy}.  Let $\mathbf{1}_k$ be the characteristic function on $E_k$ and define
\begin{equation} \label{eqn:char_func_elem}
\mathbf{1}_{\bm{\alpha}} : = \mathbf{1}_{\alpha_{n-1}} \circ g^{n-1} 
\times \dots \times \mathbf{1}_{\alpha_{1}} \circ g^{1} \times \mathbf{1}_{\alpha_{0}} 
\end{equation}
which is the characteristic function of $E_{\bm{\alpha}}$.  Note $\mu(E_{\bm{\alpha}}) = \int_{\mathcal{E}} \mathbf{1}_{\bm{\alpha}}  \, d \mu$.  This leads us to our next definition:
\begin{defn} \label{def:smooth_partition}
Given any $\kappa_{slab}>0$,  define 
\begin{align*}
\mathcal{E}_{\kappa_{slab}} = \{ (x,\xi) \in T^*M \, | \, \frac{1}{2} - \kappa_{slab} \leq \| \xi \|_{g(x)} \leq \frac{1}{2} + \kappa_{slab} \}.
\end{align*} 
There exists a family of functions $\{ \pi_k \}_{k=1}^K$ and neighbourhoods $\{\mathscr{O}_k\}_{k}$ in $\mathcal{E}_{\kappa_{slab}}$ with the following properties: $\pi_k \in C^{\infty}_0(\mathcal{E}_{\kappa_{slab}}, [0,1])$, $\supp \pi_k \subseteq \mathscr{O}_k$, $E_k \subseteq \mathscr{O}_k$ with $\pi_{k \upharpoonright E_k} = 1$, and
\begin{equation*} 
\sum_{k=1}^K \pi_k := \chi_{\kappa_{slab}} \mbox{ with } \, \supp \chi_{\kappa_{slab}} \subset \mathcal{E}_{\kappa_{slab}} \, \mbox{ and } \chi_{\kappa_{slab}} = 1 \mbox{ on } \mathcal{E}_{\kappa_{slab}/2}.
\end{equation*}
We call such a family $\mathcal{P}_{\rm{sm}} := \{ \pi_k \}_{k=1}^K$ a \textit{smooth partition of unity} near $\mathcal{E}$.  

In fact, given our choice of $\mathcal{P}$, one can set $\pi_{k} = f_{\eta} * \mathbf{1}_{E_k}$ where $f_{\eta}$ is a family of smooth, compactly supported, approximation-to-the-identities in $\eta \in (0,1)$ where $\lim_{\eta \rightarrow 0}  f_{\eta} * \mathbf{1}_{E_k} = \mathbf{1}_{E_k} $.  Set $\mathscr{O}_k := \pi_{T^*M}^{-1}(\mathscr{O}_k') \cap \mathcal{E}_{\kappa_{slab}}^{int}$ (where $ \pi_{T^*M}^{-1}$ is now the inverse of the canonical projection map from $T^*M$) and take $\eta$ sufficiently small.  At the final steps of our argument, we take $\eta \rightarrow 0$.
\end{defn}
We can easily extend the notion of entropy to smooth partitions (useful in the context of semiclassical analysis) by simply setting $\mu=0$ outside of $\mathcal{E}$:
\begin{defn} \label{defn:smooth_entropy_pressure}
Let $\kappa_{slab}>0$  be given.  Consider a corresponding smooth partition $\mathcal{P}_{\rm{sm}}$ near $\mathcal{E}$ and some weights $\bm{w}$.  We define the \textit{entropy of a smooth partition $\mathcal{P}_{\rm{sm}}$} as 
\begin{equation} \label{eqn:metric_ent}
H(\mu, \mathcal{P}_{\rm{sm}}) := - \sum_{k=1}^K \eta(\mu(\pi_k))
\end{equation}
and \textit{pressure of a (weighted) smooth partition $\mathcal{P}_{\rm{sm}}$}
\begin{equation*}
p(\mu, \mathcal{P}_{\rm{sm}}, \bm{w}) := - \sum_{k=1}^K \mu(\pi_k) \log \left( w_k^2 \, \mu(\pi_k) \right).
\end{equation*}
The corresponding definitions of $\mathcal{P}_{\rm{sm}}^{\vee n}$, $H_0^{n-1}$, and $p_0^{n-1}$ are clear.
\end{defn}

Thanks to this setup and assuming that $\mu$ does not charge the boundaries $\partial E_k$ for all $k=1,\dots, K$, we have that given any $\gamma'>0$ and $n \geq 1$, we can choose $\eta$ small enough and thus $\mathcal{P}_{\rm{sm}}$ such that
\begin{equation*} 
\left| H(\mu, \mathcal{P}^{\vee n}_{\rm sm}) - H(\mu, \mathcal{P}^{\vee n}) \right| \leq \gamma'.
\end{equation*}
See \cite[Section 3.1.3]{Non12} for more details on the compatibility of this assumption with semiclassical measures $\mu_{sc}$, itself based off \cite[Appendix A2]{An08} and \cite[Section 2.2.8]{AN07}.

\subsection{Quantum partitions}

\subsubsection{Quasiprojectors} \label{sect:quasiprojs}

With Definition \ref{def:smooth_partition} in hand, we form a notion of an approximate quantum partition of unity:
\begin{defn}
An \textit{approximate quantum partition of unity} is a collection $\mathcal{P}_{\rm{sm}, \rm{q}} = \{ \Pi_k := \Op_{\h}(\tilde{\pi}_k) \}_k$, whose individual elements $\Pi_k$ we call \textit{quasiprojectors} with corresponding symbols $\tilde{\pi}_k \in S^{-\infty, 0}(T^*M)$ that satisfy the following properties:
\begin{enumerate}
\item for each $k$, the symbol $\tilde{\pi}_k$ is constructed so that: $\supp \tilde{\pi}_k \subset \mathscr{O}_k$, $\sqrt{\pi}_k$ is the principal symbol, and $\Pi_k$ is selfadjoint.
\item the family of operators $\mathcal{P}_{\rm{sm}, \rm{q}}$ satisfies the relation
\begin{equation} \label{eqn:qou_prop}
\sum_{k=1}^K \Pi_k^2 = \Op_{\h}(\tilde{\chi}_{\kappa_{slab}}) + \mathcal{O}_K(\h^{\infty})
\end{equation}
where $\tilde{\chi}_{\kappa_{slab}} \in S^{-\infty, 0}(T^*M)$ itself satisfies
\begin{equation*}
\|\Op_{\h}(\tilde{\chi}_{\kappa_{slab}})\| = 1 + \mathcal{O}(\h^{\infty}), \, \, \supp \tilde{\chi}_{\kappa_{slab}} \subset \mathcal{E}_{\kappa_{slab}}, \, \mbox{ and } \tilde{\chi}_{\kappa_{slab}} = 1 \mbox{ on } \mathcal{E}_{\kappa_{slab}/2}.
\end{equation*}
\end{enumerate}
\end{defn}

Notice that $\Pi_k$ cannot simply be $\Op_{\h}(\sqrt{\pi_k})$ as we need lower order symbols in the expansion in order to guarantee property (\ref{eqn:qou_prop}); otherwise, our error term would be $\mathcal{O}(h)$.  By iteratively adjusting the lower-order symbols, \`a la standard procedures applied to $\tilde{\pi}_k$ and $\tilde{\chi}_{\epsilon_{slab}}$, we obtain the desired property.

\subsubsection{Refined quantum partitions}
Analogous to the refinements $\mathcal{P}^{\vee n}_{\rm sm}$ of an initial smooth partition $\mathcal{P}_{\rm sm}$, we consider their quantum versions.
\begin{defn}
Consider the Schr\"odinger propagator $U = e^{-iP(\h)/\h}$.  Then for a word $\bm{\alpha} \in \Sigma^n$, we define the refined (forward) quasiprojectors as 
\begin{equation*}
\Pi_{\bm{\alpha}} := U^{-(n-1)} \Pi_{\alpha_{n-1}} U \Pi_{\alpha_{n-2}} \dots \Pi_{\alpha_{1}} U \Pi_{\alpha_{0}}
\end{equation*}
The family $\mathcal{P}^{\vee n}_{sm,q} := \{ \Pi_{\bm{\alpha}} \}_{\bm{\alpha} \in \Sigma^n}$ is called the \textit{$n$-th refinement of the quantum partition $\mathcal{P}_{\rm{sm},\rm{q}}$}.  The definition for the refined backwards quasiprojectors uses $U^{-1}$ and is analogous.
\end{defn}

For words $\bm{\alpha}$ of length twice the so-called Ehrenfest time (\ref{eqn:Ehrenfest}), the quasiprojectors $\Pi_{\bm{\alpha}}$ fall outside of the scope of Egorov's Theorem for long times.  Despite this, our refined quasiprojectors continue to form quantum partitions of unity, as encompassed in the next proposition.  
\begin{prop}[Refined quasiprojectors form an approximate quantum partition of unity] (cf \cite[Prop 3.1]{Non12})\label{prop:pou_quasi}
Let $\delta_{energy}>0$ be given and choose $C>0$ such that  $C+1 < \delta_{energy}^{-1}$.  Let $N>0$ be given.  Then there exists a regime of $n$ and $\h$ such that the refined quantum partition $\mathcal{P}^{\vee n}_{sm,q}$ continues to form a quantum partition of unity microlocally near $\mathcal{E}$.  That is, for any symbol $\chi \in S^{-\infty, 0}(T^*X)$ supported inside of $\mathcal{E}_{\kappa_{slab}/2}$, we have
\begin{align*}
\, \, \, & \sum_{\bm{\alpha} \in \Sigma^n} \Pi_{\bm{\alpha}}^* \Pi_{\bm{\alpha}} = S_n, \, \, \, \hfill \|S_n\| = 1 + \mathcal{O}_{K}(\h^{N}) \\
& (Id - S_n) \, \Op_{\h}(\chi) = \mathcal{O}_{K}(\h^{N})
\end{align*}
for all $\h \leq \h_0(\kappa_{slab}, \delta_{energy}, K,N)$ and $1 \leq n \leq \lfloor C |\log \h| \rfloor$.
\end{prop}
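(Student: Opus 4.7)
The plan is to proceed by induction on the word length $n$, exploiting the telescoping structure of the refined quasiprojectors and the approximate flow-invariance of the energy cutoff $\tilde{\chi}_{\epsilon_{slab}}$. The base case $n=1$ is precisely the defining relation $\sum_k \Pi_k^2 = \Op_{\h}(\tilde{\chi}_{\epsilon_{slab}}) + \mathcal{O}_K(\h^\infty)$. For the second microlocal assertion, at $n=1$ it is immediate from the disjoint-support lemma of Appendix \ref{sect:aniso_sharp_cutoffs} together with the fact that $\tilde{\chi}_{\epsilon_{slab}} \equiv 1$ on $\mathcal{E}_{\epsilon_{slab}/2}$.

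For the inductive step, I would write $\Pi_{\bm{\alpha}} = U^{-(n-1)} \Pi_{\alpha_{n-1}} U \cdot \Pi'_{\bm{\alpha}'}$, where $\bm{\alpha}'=\alpha_0 \cdots \alpha_{n-2}$ and $\Pi'_{\bm{\alpha}'}$ is the length-$(n-1)$ refined quasiprojector. Squaring and summing first over $\alpha_{n-1}$ yields
\begin{equation*}
\sum_{\bm{\alpha}\in\Sigma^n} \Pi_{\bm{\alpha}}^* \Pi_{\bm{\alpha}}
= \sum_{\bm{\alpha}'\in\Sigma^{n-1}} (\Pi'_{\bm{\alpha}'})^* \, U^{-1}\Big(\sum_{\alpha_{n-1}} \Pi_{\alpha_{n-1}}^2\Big) U \, \Pi'_{\bm{\alpha}'}.
\end{equation*}
Apply the hypothesis to the inner sum and then Egorov's theorem to conjugate $\Op_{\h}(\tilde{\chi}_{\epsilon_{slab}})$ through $U$. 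Because $g^t$ preserves $\mathcal{E}$ (and, to leading order, the slab $\mathcal{E}_{\epsilon_{slab}}$ for $\epsilon_{slab}$ small), the conjugated symbol $\tilde{\chi}_{\epsilon_{slab}} \circ g^{-1}$ remains microlocally the identity on $\mathcal{E}_{\epsilon_{slab}/2}$. One can therefore replace $U^{-1}\Op_{\h}(\tilde{\chi}_{\epsilon_{slab}})U$ by $\Op_{\h}(\tilde{\chi}_{\epsilon_{slab}})$ modulo an operator with symbol supported away from $\mathcal{E}_{\epsilon_{slab}/2}$, absorb it into a refined $\tilde{\chi}$, and then invoke the inductive hypothesis to obtain $S_n = \Op_{\h}(\tilde{\chi}^{(n)}_{\epsilon_{slab}}) + \mathcal{O}_K(\h^\infty)$ for a symbol in $S^{-\infty,0}_{\mathcal{E},1-\delta_{long}}$ that is still $\equiv 1$ near $\mathcal{E}_{\epsilon_{slab}/2}$. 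The second claim then follows once more from the disjoint-support lemma applied to $(\mathrm{Id}-\Op_{\h}(\tilde{\chi}^{(n)}_{\epsilon_{slab}}))\Op_{\h}(\chi)$.

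The main obstacle is controlling the iterated Egorov dynamics on logarithmic time scales, which is precisely why we work in the anisotropic class $S^{\cdot,\cdot}_{\mathcal{E},1-\delta_{long}}$ rather than in the standard calculus. At each step of the induction the conjugation $U^{-1}(\cdot)U$ pulls back the symbol by the geodesic flow, causing tangential derivatives to grow like $e^{\lambda_{\max}}$; after $n$ iterations one sees factors of order $e^{n\lambda_{\max}}$, which translates to a symbol-norm loss of at most $\h^{-n\lambda_{\max}/|\log \h|\cdot(1-\delta_{long})}$. The hypothesis $C+1 < \delta_{long}^{-1}$, that is $C\delta_{long} < 1 - \delta_{long}$, is tailored so that these losses stay strictly below $\h^{-1}$, keeping the remainders genuinely $\mathcal{O}(\h^\infty)$ and letting all compositions remain within the exotic calculus. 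Finally, combining these ingredients with $K^n = e^{n\log K}$ (negligible compared to $\h^{-\varepsilon}$ on our time window) gives the claimed $\|S_n\|= 1 + \mathcal{O}_K(\h^\infty)$ and the microlocal identity on $\Op_{\h}(\chi)$.
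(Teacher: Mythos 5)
Note first that the paper does not actually give a proof of this proposition: it is quoted with ``cf \cite[Prop 3.1]{Non12}'' and the text later explicitly appeals to ``the proof of Proposition \ref{prop:pou_quasi} found in \cite[Proposition 3.1]{Non12}.'' So there is no internal proof to compare against; what you have reconstructed is a version of Nonnenmacher's argument. Your overall shape is right — peel off one letter, use the $n=1$ partition-of-unity property \eqref{eqn:qou_prop}, propagate with Egorov, control the remainders over $K^n$ words with $n\leq C|\log\h|$ — and the second assertion does indeed reduce to the disjoint-support lemma once one knows $S_n$ is a quantization of a cutoff that is $\equiv 1$ on $\mathcal{E}_{\epsilon_{slab}/2}$.

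There is, however, a concrete algebraic slip that is not just cosmetic. With the paper's definition $\Pi_{\bm{\alpha}} = U^{-(n-1)}\Pi_{\alpha_{n-1}}U\Pi_{\alpha_{n-2}}\cdots U\Pi_{\alpha_0}$, peeling off $\alpha_{n-1}$ and writing $\bm{\alpha}'=\alpha_0\cdots\alpha_{n-2}$ gives $\Pi_{\bm{\alpha}} = U^{-(n-1)}\Pi_{\alpha_{n-1}}U^{n-1}\,\Pi'_{\bm{\alpha}'}$, \emph{not} $U^{-(n-1)}\Pi_{\alpha_{n-1}}U\,\Pi'_{\bm{\alpha}'}$, and the display should read
\begin{align*}
\sum_{\bm{\alpha}\in\Sigma^n}\Pi_{\bm{\alpha}}^*\Pi_{\bm{\alpha}} = \sum_{\bm{\alpha}'\in\Sigma^{n-1}}(\Pi'_{\bm{\alpha}'})^*\,U^{-(n-1)}\Big(\sum_{k}\Pi_k^2\Big)U^{\,n-1}\,\Pi'_{\bm{\alpha}'},
\end{align*}
i.e.\ the conjugation is by $U^{\pm(n-1)}$, not $U^{\pm1}$. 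This matters: $U^{\pm 1}$ is a single short-time Egorov step, whereas $U^{\pm(n-1)}$ on logarithmic time windows is precisely the long-time propagation difficulty that forces the anisotropic $S_{\mathcal{E},\nu}$ calculus. If you want a genuinely one-step Egorov conjugation, you should instead peel off $\alpha_0$: write $\hat{\bm{\alpha}}=\alpha_1\cdots\alpha_{n-1}$, note $\Pi_{\bm{\alpha}}=U^{-1}\Pi_{\hat{\bm{\alpha}}}U\,\Pi_{\alpha_0}$, and get $S_n=\sum_{\alpha_0}\Pi_{\alpha_0}U^{-1}S_{n-1}U\Pi_{\alpha_0}$, which is a genuine induction on $S_{n-1}$ with only one conjugation per step; here the $\mathcal{O}_K(\h^\infty)$ errors multiply at most by $K$ per step, and $K^n\lesssim \h^{-C\log K}$ keeps the total $\mathcal{O}_K(\h^\infty)$. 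Your draft conflates the two decompositions (the $U^{\pm1}$ belongs to the $\alpha_0$-peeling, the inner $\sum_{\alpha_{n-1}}\Pi_{\alpha_{n-1}}^2$ belongs to the $\alpha_{n-1}$-peeling). Relatedly, after replacing the inner operator by $\Op_{\h}(\tilde\chi)$ one cannot directly ``invoke the inductive hypothesis'' on $\sum_{\bm{\alpha}'}(\Pi'_{\bm{\alpha}'})^*\Op_{\h}(\tilde\chi)\Pi'_{\bm{\alpha}'}$ — that is not $S_{n-1}$ — so the argument in your form needs the intermediate step that $\Op_{\h}(\tilde\chi)$ acts as the identity modulo $\mathcal{O}(\h^\infty)$ on the range of $\Pi'_{\bm{\alpha}'}$.

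Your discussion of why $(C+1)\delta_{long}<1$ is needed is heuristically in the right direction, though the mechanism is slightly mis-described: the cutoffs $\chi^{(n)}$ are functions of $|\xi|^2-E$, which is flow-invariant, so their tangential derivatives do not suffer the $e^{n\lambda_{\max}}$ blow-up you describe. The actual role of the condition is to keep the cutoff width $\h^{1-(1+C_{\delta_{long}})\delta_{long}}$ a positive power of $\h$, so that $\chi^{(n)}\in S^{-\infty,0}_{\mathcal{E},1-\delta_{long}}$ and the Sj\"ostrand--Zworski second-microlocal calculus (in particular the disjoint-support lemma) applies on the whole time window.
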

In Sections \ref{sect:proj_micro_meas}, the constant $C$ above appears as $C_{\delta_{energy}}.$

\section{Ehrenfest time and the hyperbolic dispersive estimate} \label{sect:hyp_disp}

\begin{defn} \label{defn:Ehrenfest}
Let $\delta_{Ehr} \in (0,1), \kappa_{slab}>0,$ and $\h>0$ be given.  We call
\begin{equation} \label{eqn:Ehrenfest}
T_{\delta_{Ehr}, \kappa_{slab}, \h} := \frac{(1-\delta_{Ehr})}{2 \lambda_{\max}(\kappa_{slab})} |\log \h|
\end{equation}
the \textit{Ehrenfest time} on the energy neighbourhood $\mathcal{E}_{\kappa_{slab}}$.  The number $\lambda_{\max}(\kappa_{slab})$ is the maximal expansion rate across the entire slab $\mathcal{E}_{\kappa_{slab}}$.  Note that for $\kappa_{slab}$ small enough (depending on $M$), $\lambda_{\max}(\kappa_{slab}) = \lambda_{\max} + \mathcal{O}(\kappa_{slab}).$
\end{defn}

\begin{defn}[Coarse-grained unstable Jacobian]
Let $\mathcal{P}$ be a partition of $\mathcal{E}$.  Recall
\begin{equation*}
J^u(\rho) : = |\det D(g^1)_{\upharpoonright E^u(\rho) }|
\end{equation*}
is the \textit{unstable Jacobian for $g^1$ at $\rho \in \mathcal{E}$} with $E^u(\rho)$ being the unstable subspace at $\rho \in \mathcal{E}$.  For $E_{\alpha_j} \in \mathcal{P}$, we define $J^u(\alpha_j) := \inf \{ J^u(\rho) \, | \, \rho \in E_{\alpha_j} \}$.  We call the quantity
\begin{equation}\label{eqn:coarse_grain_Jac}
J_n^u(\bm{\alpha}) := \prod_{i=0}^{n-1} J^u(\alpha_i)
\end{equation}
the \textit{coarse-grained unstable Jacobian at $\bm{\alpha} \in \Sigma^n$}.
\end{defn}

The following is from \cite[Corollary 3.7]{Non12} and \cite[Theorem 2.7]{AN07}:
\begin{prop}[Hyperbolic dispersive estimate] \label{prop:hyp_disp}
Let $\delta_{Ehr} \in (0,1)$ and $\kappa_{slab}>0$ be given. Choose $\delta_{energy}>0$ small enough with $\frac{4}{\lambda_{\max}} + 1 < \delta_{energy}^{-1} $.  Furthermore, consider a smooth partition $\mathcal{P}_{\rm{sm}}$ near $\mathcal{E}$.  Then, there exists $\h_{0}(\mathcal{P}_{\rm{sm}}, \delta_{energy}, \delta_{Ehr}, \kappa_{slab})>0$, $C>0$, and $c_0>0$ such that for any $0 < \h \leq \h_{0}$, any integer $n \in [0, 2 T_{\delta_{Ehr}, \kappa_{slab}, \h}]$ and any two sequences $\bm{\alpha}, \bm{\beta}$ of length $n$, the following estimate holds:
\begin{equation} \label{eqn:hyp_disp}
\left\| \Pi_{\bm{\alpha}} U^n \, \Pi_{\bm{\beta}} U^{-n} \chi^{(n)}\left( P(\h)-\frac{1}{2} \right) \right\| \leq C \,  h^{-(d-1+c_0\delta_{energy})/2} J_n^u(\bm{\alpha})^{-1/2} \,  J_n^u(\bm{\beta})^{-1/2}
\end{equation}
\end{prop}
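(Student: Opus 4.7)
The plan is to reduce the bound to a norm estimate for a pseudodifferential operator with highly anisotropic symbol, obtained by propagating the quasiprojectors along the geodesic flow and then exploiting the uniform hyperbolicity of $g^t$ on $\mathcal{E}$. First I would unfold the products, writing $\Pi_{\bm{\alpha}} = U^{-(n-1)}\Pi_{\alpha_{n-1}}U\Pi_{\alpha_{n-2}}\cdots U\Pi_{\alpha_0}$ and similarly expanding $U^n\Pi_{\bm{\beta}}U^{-n}$, so that the full composition becomes an interleaved chain of unitaries and quasiprojectors. After rearrangement, each $\Pi_{\alpha_k}$ appears conjugated by $U^k$ (with principal symbol $\pi_{\alpha_k}\circ g^k$) and each $\Pi_{\beta_k}$ conjugated by $U^{-k}$ (symbol $\pi_{\beta_k}\circ g^{-k}$), all acting on the microlocally restricted space $\chi^{(n)}(P(\h)-E)L^2$.

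Next I would apply Egorov's theorem in the exotic calculus $\Psi^{-\infty,0}_{\mathcal{E},1-\delta_{Ehr}}$ adapted to the energy shell, which tolerates symbols whose derivatives transverse to $\mathcal{E}$ are of size $\h^{-(1-\delta_{Ehr})|\alpha+\beta|}$. The restriction $n \le 2T_{\delta_{Ehr},\epsilon_{slab},\h} = \frac{(1-\delta_{Ehr})}{\lambda_{\max}(\epsilon_{slab})}|\log\h|$ is exactly what guarantees that $\pi_{\alpha_k}\circ g^k$ and $\pi_{\beta_k}\circ g^{-k}$ remain in this class, since their unstable and stable derivatives grow at most like $e^{n\lambda_{\max}(\epsilon_{slab})} \sim \h^{-(1-\delta_{Ehr})}$. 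Meanwhile, the spectral cutoff $\chi^{(n)}(P(\h)-E)$ localizes on a slab of transverse thickness $\h^{1-\delta_{long}}$ around $\mathcal{E}$, which is precisely the scale this calculus can accommodate.

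After Egorov, the composed operator is pseudodifferential with principal symbol supported essentially on $g^{-n}(E_{\bm{\alpha}})\cap g^{n}(E_{\bm{\beta}})$. By the Anosov property, the propagated $\bm{\alpha}$-cylinder contracts in its unstable direction to a tube of width $\sim J_n^u(\bm{\alpha})^{-1}$, and the $\bm{\beta}$-cylinder contracts in its stable direction to width $\sim J_n^u(\bm{\beta})^{-1}$. Combined with the thin energy slab of width $\h^{1-\delta_{long}}$ transverse to $\mathcal{E}$, the total phase-space support has volume of order $J_n^u(\bm{\alpha})^{-1}J_n^u(\bm{\beta})^{-1}\h^{1-\delta_{long}}$, and a Calder\'on--Vaillancourt-type $L^2 \to L^2$ bound tailored to this anisotropic class then produces the claimed operator norm $\h^{-(d-1+c_0\delta_{long})/2}J_n^u(\bm{\alpha})^{-1/2}J_n^u(\bm{\beta})^{-1/2}$, with the prefactor $\h^{-(d-1)/2}$ coming from the usual $(2\pi\h)^{-d}$ weighed against the square root of the volume of the support restricted to $\mathcal{E}$, and $c_0\delta_{long}$ absorbing the loss from the anisotropic calculus.

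The hard part will be making the third step rigorous: one must show that compositions of many quasiprojectors, even after Egorov, produce a symbol that obeys the correct anisotropic derivative bounds so that Calder\'on--Vaillancourt applies with exactly the loss $c_0\delta_{long}$. This requires the second-microlocal Sj\"ostrand--Zworski calculus of Section \ref{sect:aniso_sharp_cutoffs} together with a careful stationary-phase analysis to track the $\h$-loss accumulated over the $n \sim |\log\h|$ symbol compositions, ensuring that the error terms do not swamp the principal-symbol bound. Once this is in place, the Jacobian factors $J_n^u(\bm{\alpha})^{-1/2}$ and $J_n^u(\bm{\beta})^{-1/2}$ follow from classical hyperbolic estimates: the coarse-graining $J_n^u(\bm{\alpha}) = \prod_{i=0}^{n-1} J^u(\alpha_i)$ is simply the discretization of the cocycle $|\det D(g^n)_{\upharpoonright E^u}|$ along the cylinder $E_{\bm{\alpha}}$.
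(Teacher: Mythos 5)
The paper itself does not prove Proposition \ref{prop:hyp_disp}; it is quoted verbatim from \cite[Corollary 3.7]{Non12} and \cite[Theorem 2.7]{AN07}, so there is no ``paper's own proof'' to compare against. That said, your sketch does not reproduce the argument of \cite{AN07}, and the two places where it goes wrong are precisely the two hardest points.

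First, the final step is not a valid mechanism for bounding an operator norm. A Calder\'on--Vaillancourt bound controls $\|\Op_{\h}(a)\|_{L^2\to L^2}$ in terms of $\sup$-norms of finitely many derivatives of $a$; it gives $O(1)$ for a bounded symbol, regardless of how small the volume of its support is (a multiplication operator by an indicator on a tiny set still has norm $1$). If you want the phase-space \emph{volume} of the support to control the norm, you must pass to the Hilbert--Schmidt norm, which equals $(2\pi\h)^{-d/2}\|a\|_{L^2}$ for the Weyl quantization; that would indeed reproduce the prefactor $\h^{-(d-1+\delta_{long})/2}$ and the two Jacobian factors, but only if the composition were an honest pseudodifferential operator with a symbol bounded by one and supported on the tube, which brings us to the second point.

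Second, the claim that the range $n\le 2T_{\delta_{Ehr},\epsilon_{slab},\h}$ keeps $\pi_{\alpha_k}\circ g^{k}$ and $\pi_{\beta_k}\circ g^{-k}$ inside the exotic class $S^{-\infty,0}_{\mathcal{E},1-\delta_{Ehr}}$ is false. Derivatives of $\pi\circ g^k$ grow like $e^{k\lambda_{\max}}$; for $k\sim T_{\delta_{Ehr},\epsilon_{slab},\h}$ this is $\h^{-(1-\delta_{Ehr})/2}$, which is still $\h^{-\nu}$ with $\nu<1/2$ and the calculus survives. For $k\sim 2T_{\delta_{Ehr},\epsilon_{slab},\h}$ it is $\h^{-(1-\delta_{Ehr})}$, i.e.\ $\nu>1/2$, and the exotic pseudodifferential calculus breaks down: there is no Egorov or Calder\'on--Vaillancourt statement you can invoke, and the conjugated quasiprojectors are no longer pseudodifferential. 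This is exactly why Proposition \ref{prop:concat_quasiproj} (which you are implicitly leaning on) is restricted to $n\le T$, not $2T$.

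The mechanism in \cite{AN07} is genuinely different: it is a dispersive $L^1\to L^\infty$ estimate on the Schwartz kernel of the composed operator, proved by a WKB/Fourier-integral-operator analysis. One writes the time-one propagator $U$ as an oscillatory integral and propagates a Lagrangian state supported on a piece of a local unstable leaf; the cutoffs $\Pi_{\alpha_k}$ select branches, and stationary phase with a uniformly hyperbolic phase shows the amplitude of each surviving branch decays like $J_n^u(\bm{\beta})^{-1/2}$ after $n$ steps. The factor $\h^{-(d-1)/2}$ comes from the dimension $d-1$ of the momentum sphere in the initial Lagrangian, and the $c_0\delta_{long}$ loss from the sharp spectral cutoff $\chi^{(n)}$. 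In short: you have correctly identified the geometric ingredients (cylinders thin out at the rate of the unstable Jacobian, energy localization is at scale $\h^{1-\delta_{long}}$, and the second-microlocal calculus handles the sharp energy cutoff), but the analytic engine --- a kernel/dispersive estimate, not a phase-space-volume argument --- is missing, and the symbol class you invoke fails in the regime where the estimate is actually needed.
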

The entropic uncertainty principle (described in Section \ref{sect:EUP}) requires that we estimate the operator norm $\Pi_{\bm{\alpha}} \, U^n \, \Pi_{\bm{\beta}} \, U^{-n} \chi^{(n)}\left(P(\h) - \frac{1}{2} \right)$, resulting from our choice of partitions $\bm{\rho}$ and $\bm{\tau}$, therefore feeding the right-hand side of (\ref{eqn:hyp_disp}) into the crucial quantity $c_{cone}$.  In fact, \cite[Section 3.2]{DJ18} and \cite[Theorem 7.1]{Riv10} show this operator is actually pseudodifferential.

\end{document}